\newtheorem{theorem}{Theorem}[section]
\newtheorem{cor}[theorem]{Corollary}
\newtheorem{lemma}[theorem]{Lemma}
\newtheorem{proposition}[theorem]{Proposition}
\newtheorem{remark}[theorem]{Remark}
\newtheorem{example}[theorem]{Example}
\numberwithin{equation}{section}
\def\beq{\begin{equation}}
\def\eeq{\end{equation}}
\def\ben{\begin{enumerate} }
\def\een{\end{enumerate} }
\def\nc{non-commutative}
\def\smin{ \sigma^{min}  }
\def\cC{{\mathcal C}}
\def\cD{{\mathcal D}}
\def\cG{{\mathcal G}}
\def\cH{{\mathcal H}}
\def\cL{{\mathcal L}}
\def\cO{{\mathcal O}}
\def\cQ{{\mathcal Q}}
\def\cN{{\mathcal N}}
\def\cM{{\mathcal M}}
\def\cP{{\mathcal P}}
\def\cR{{\mathcal R}}
\def\cS{{\mathcal S}}
\def\cT{{\mathcal T}}
\def\cU{{\mathcal U}}
\def\cV{{\mathcal V}}
\def\cZ{{\mathcal Z}}
\def\matn{\mathbb R^{n\times n}}
\def\Smatn{\matn_{sym}}
\def\cW{\mathcal W}
\def\gtupn{(\mathbb{R}^{n\times n}_{sym})^g}
\def\Smatng{ (\Smatn)^g}
\def\matn{\mathbb R^{n\times n}}
\def\RRx{\RR \langle x \rangle}
\def\cK{\mathcal K}
\def\ntn{n\times n}
\newcommand{\boundary}[1]{\partial \mathcal D_{#1}}
\newcommand{\posdom}[1]{\mathcal D_{#1}}
\newcommand{\SMATN}[1]{\mathbb R_{sym}^{{#1}\times{#1}}}
\newcommand{\RR}{{\mathbb R}}
\newcommand{\SMATNG}[1]{(\SMATN{#1})^g}
\newcommand{\gh}{\mathfrak{h}}
\newcommand{\tp}[3]{\mathcal T_{#1}({#2},{#3})}
\newcommand{\csff}[4]{-\langle{#1}^{\prime\prime}(#2)[#3]#4,#4\rangle}
\newcommand{\graded}[2]{\mathcal B({#1},{#2})}
\begin{document}

\def\RR{\mathbb R}


\setcounter{page}{1}
\title[Non-commutative Curvature]
{Non-commutative Varieties with  Curvature
having Bounded Signature}

\author[H.~Dym, J.~W.~Helton, and
S.~McCullough]{Harry Dym, Bill Helton$^1$,
  and Scott McCullough$^2$}

\address{Department of Mathematics\\
  Weizmann Institute\\
   Rehovot, 76100 \\
   Israel}

\email{harry.dym@weizmann.ac.il}

\address{Department of Mathematics\\
   University of California, San Diego\\
   La Jolla, CA 92093-0112\\
   USA}

\email{helton@math.ucsd.edu}

\address{Department of Mathematics\\
  University of Florida\\
  Box 118105\\
  Gainesville, FL 32611-8105\\
  USA}

\email{sam@math.ufl.edu}

\subjclass[20]{47Axx (Primary).
47A63, 47L07, 47L30, 14P10 (Secondary)}

\keywords{Linear Matrix Inequalities, Non-commutative polynomials,
Non-commutative varieties,
Non-commutative free semialgebraic geometry}

\thanks{
\quad ${}^1$ Research supported by National Science
 Foundation grants DMS 07758 and DMS 0757212,
and the Ford Motor Co, \quad ${}^2$ Research
  supported by the NSF}

\date{\today}

\begin{abstract}
 A  natural notion 
for the signature
 $C_{\pm}(\cV(p))$ of the curvature of the zero set $\cV(p)$ of a non-commutative polynomial $p$  is introduced. The main result of this paper is the bound
$$ \textup{deg} \, p \ \leq \ 2 C_\pm (\cV(p) ) + 2.$$
It is obtained under some  irreducibility and
 nonsingularity conditions, and shows that the signature of the curvature
of the zero set of $p$
dominates its degree.

The condition
  $C_+(\cV(p))=0$ means that the non-commutative
  variety
   $\cV(p)$ has positive curvature. In this case
the preceding inequality implies that
the degree of $p$ is at most two.
Non-commutative
varieties $\cV(p)$ with positive curvature
were introduced in \cite{DHMind}.
There a slightly weaker irreducibility hypothesis
plus a number of additional hypotheses
yielded a weaker result on $p$.
The approach here is quite different; it is 
cleaner, and allows for the
treatment of arbitrary signatures.

In  \cite{DGHM} the degree of
a \nc \ polynomial $p$ was bounded by twice the signature
of its Hessian plus two.
 In this paper we introduce
 a modified version of this non-commutative Hessian
 of $p$
 which turns out to be very appropriate for analyzing
 the  
variety $\cV(p)$.
\end{abstract}

\maketitle

\bigskip

\newpage

\section{Introduction}


In the classical setting of a surface defined by the zero set
$\nu(p)$ of a
polynomial $p=p(x)=p(x_1,\dots,x_g)$ in $g$ commuting variables,
the second fundamental form at a smooth point
$x_0$ of $\nu(p)$ is the quadratic  form,
\begin{equation}
\label{def:2ndfform}
  -\langle (\text{Hess}\,p)(x_0) h, h\rangle,
\end{equation}
 where $\text{Hess}\,p$ is the Hessian of $p$, and $h\in\RR^g$ is
 in the tangent space to the surface $p(x)=0$
 at $x_0$; i.e., $\nabla p(x_0)\cdot h=0$.
\footnote{
The choice of the minus sign in (\ref{def:2ndfform}) is somewhat arbitrary.
Classically the sign of the second fundamental form is associated with the
choice of a smoothly varying vector that is normal to $\nu(p)$.
The zero set $\nu(p)$ has positive curvature
 at $x_0$ if the second fundamental form is
either positive semidefinite or  negative semi-definite at $x_0$.
For example,
if we define $\nu(p)$ using a concave function $p$,
then the second fundamental form is
negative semidefinite, while
for the same set $\nu(-p)$ the second fundamental form is
positive  semidefinite.
}

 In this paper we show that in the non-commutative setting
 even a modicum of positive curvature of
 the zero set $\cV(p)$ for a non-commutative
 polynomial $p$ (subject to appropriate irreducibility constraints)
implies that $p$ is convex - and thus, $p$ 
 has degree at most two - and $\cV(p)$ has
 positive curvature everywhere; see
 Theorem \ref{thm:sigmain}
 and its corollary, Corollary \ref{cor:sigmain},
 for the precise statements.
In addition, we introduce
  a natural
  notion of the signature $C_\pm(\cV(p))$ of a variety $\cV(p)$ 
   and obtain the bound 
$$ \textup{deg} \, p \ \leq \ 2 C_\pm (\cV(p) ) + 2$$
on the degree of $p$ in terms of the signature $C_\pm (\cV(p))$.


  Throughout the paper we shall adopt
  the convention that
    $C_+(\cV(p))=0$  corresponds to positive curvature,
 since in our examples, defining functions $p$ are
typically concave or  quasiconcave.
The convention $C_-(\cV(p))=0$ or even
$C_\pm(\cV(p))=0$ would be equally reasonable.
Either way gives the same mathematical consequences.
%

Now that the main results have been described informally,
the remainder of this introduction turns to the precise statements.
 The setting of this paper
 overlaps   that of \cite{DHMind}.
The principal
 definitions are reviewed briefly for the convenience of the reader
 in  Subsections \ref{subsec:polys}, \ref{subsec:previous},
  \ref{subsec:smooth}, \ref{subsec:alg-open},
 \ref{subsec:ncsag}, \ref{subsec:irreducible};
  the notion of positive curvature is introduced in Subsection
  \ref{subsec:curvature};
  and
 the main new results are stated in Subsections
 \ref{subsec:results1} and \ref{subsec:results2}. The introduction
 concludes with a guide to the rest of the paper in Subsection
\ref{subsec:guide}.

\subsection{NC Polynomials}
 \label{subsec:polys}
Let $x=\{x_1,\dots,x_g\}$  denote non-commuting indeterminates
and let $\RRx$
denote the set of  polynomials $$p(x)=p(x_1,\ldots,x_g)$$
\index{polynomial, nc}
in the indeterminates $x$ with real coefficients; i.e., the set of finite
linear combinations
\begin{equation}
\label{eq:sep6a10}
  p=\sum_{|w|\leq d} c_w w \quad\textrm{with}\quad c_w\in \mathbb R
\end{equation}
of words $w$ in $x$. The degree of such a polynomial $p$
is defined
as the maximum of the lengths $\vert w\vert$ of
the words $w$ appearing (nontrivially) in the linear combination
(\ref{eq:sep6a10}). Thus,
for example, if $g=3$, then
$$
p_1= 3x_1 x_2^3 + x_2 + x_3x_1x_2 \quad\textrm{and}\quad
p_2= 2x_1 x_2^3 + x_2^3 x_1 + x_3x_1x_2 +x_2 x_1 x_3
$$
are polynomials of degree four in $\RRx$.

There is a natural
 {\bf involution} \index{involution} $w^T$ on words given by the rule
$$
x_j^T=x_j\quad\textrm{and if}\quad w=x_{i_1}x_{i_2}\cdots x_{i_k},\quad
\textrm{then}\quad
w^T=x_{i_k}\cdots x_{i_2}x_{i_1},
$$
which extends to polynomials $p=\sum c_w w$ by linearity:
\begin{equation*}
 p^T =\sum_{|w|\le d} c_w w^T.
\end{equation*}

A polynomial $p\in\RRx$ is said to be {\bf symmetric} \index{symmetric}
if $p=p^T$. The
second polynomial $p_2$ listed above is symmetric, the first is not.
Because of the requirement $x_j^T=x_j$, the variables
are said to be symmetric.

A polynomial $p(x)=p(x_1,\ldots,x_g)$ in non-commuting variables
$\{x_1,\ldots,x_g\}$ will be referred to as an {\bf nc polynomial}
\index{nc polynomial} for short; and
{\bf nc} \index{nc} will be used as a short hand notation for non-commutative.

\subsubsection{Substituting Matrices for Indeterminates}
\label{sec:openG2}

Let $\gtupn$ denote the set of $g$-tuples
$(X_1,\ldots,X_g)$ of real symmetric $n\times n$ matrices.
We shall be interested in evaluating a
polynomial $p(x)=p(x_1,\ldots,x_g)$
that belongs to $\RRx$ at a
tuple $X=(X_1,\dots,X_g)\in(\mathbb{R}^{n\times n}_{sym})^g$.
In this
case $p(X)$ is also an $n\times n$ matrix and
the involution on $\RRx$
that was introduced earlier is compatible
with matrix transposition, i.e.,
$$
p^T(X)=p(X)^T,
$$
where $p(X)^T$ denotes the transpose of the  matrix  $ p(X)$.
When $X\in \gtupn$ is substituted into $p$
the constant term  $p(0)$ of $p(x)$  becomes $p(0) I_n$.
For example, if $p(x)=3+x^2$, then
$$ p(X)= 3 I_n+X^2.$$

\indent A symmetric polynomial $p \in \RRx$ is said to be
{\bf matrix positive}
if $p(X)$ is a positive semi-definite matrix for each tuple
$X=(X_1,\dots,X_g)\in (\mathbb{R}^{n\times n}_{sym})^g$.
Similarly,  $p$  is said to be
{\bf matrix convex} \index{matrix convex} if
\begin{equation}
\label{eq:nov13b6}
p(tX+(1-t)Y)\preceq tp(X)+(1-t)p(Y)
\end{equation}
for every pair of tuples $X, Y\in(\mathbb{R}^{n\times n}_{sym})^g$ and
$0\le t\le 1$.

\subsubsection{Derivatives}
We define the directional
derivative \index{derivative} of the
word $w=x_{j_1}x_{j_2}\cdots x_{j_n}$ with coefficient $c\in\RR$ 
as the linear form:
\begin{equation*}
w^\prime[h] = h_{j_1}x_{j_2}\cdots x_{j_n}
       + x_{j_1}h_{j_2}x_{j_3}\cdots x_{j_n} + \ \cdots \
       + x_{j_1}\cdots x_{j_{n-1}}h_{j_n}
\end{equation*}
and extend the definition to polynomials $p = \sum c_w w$ by linearity; i.e.,
\begin{equation*}
p^\prime(x)[h]=\sum c_w w'[h].
\end{equation*}

Thus, $p^\prime(x)[h] \in \mathbb R\langle x,h \rangle$ is
the
$$
\textrm{coefficient of $t$ in the expression}\quad p(x+th)-p(x);
$$
it is an nc
polynomial in $2g$ (symmetric)
variables $(x_1,\dots,x_g,h_1,\dots,h_g)$.
Higher order derivatives are computed by the same recipe, i.e., as the coefficient of $t$ in the 
expression $q(x+th)[h]-q(x)[h]$: If
$q(x)[h]=h_{j_1}x_{j_2}\cdots x_{j_n}$, then
$$
q^{\prime}(x)[h]=h_{j_1}h_{j_2}x_{j_3}\cdots x_{j_n}
+h_{j_1}x_{j_2}h_{j_3}\cdots x_{j_n}+\cdots+h_{j_1}x_{j_2}\cdots
x_{j_{n-1}}h_{j_n}
$$
and the definition is extended to finite linear combinations of such terms
by linearity.
If $p$ is symmetric, then so is $p^\prime$.
For $g$-tuples of symmetric matrices of a fixed size
$X,H ,$  the evaluation formula
\begin{equation*}
p^\prime(X)[H]=\lim_{t\to 0} \frac{p(X+tH)-p(X)}{t}
\end{equation*}
holds, and  if $q(t)=p(X+tH)$, then
\begin{equation}
\label{eq:nov14a6}
p^\prime(X)[H]=q^\prime(0)\quad\textrm{and}
\quad p^{\prime\prime}(X)[H]=q^{\prime\prime}(0)\,.
\end{equation}
The second formula in (\ref{eq:nov14a6}) is the evaluation of
the {\bf Hessian}, \index{Hessian}
$\text{Hess}\,(p)=p^{\prime\prime}(x)[h]$ of a polynomial $p \in \RRx$; it can be
thought of as the formal
second directional derivative of $p$ in the ``direction'' $h$.

If $p^{\prime\prime} \neq 0$, then 
the degree of $p$ is two or more, 
and the degree of $p^{\prime\prime}(x)[h]$ as a polynomial in the $2g$
variables $(x_1,\ldots,x_g, h_1\ldots,h_g)$ is equal to the degree
of $p(x)$ as a polynomial in $(x_1,\ldots,x_g)$
and is homogeneous of degree two in $h$. \index{degree of a polynomial}

The same conclusion holds for $k^{th}$ derivatives if $k\le d$,
the degree of $p$.
The expositions in \cite{HMVjfa} and in \cite{HPpreprint}
give more detail on the derivatives and the Hessian of
nc  polynomials.

\begin{example}\rm
A few concrete examples are listed for practice with the definitions, if
the reader is so inclined.
\begin{enumerate}
\item[\rm(1)] If $p(x)=x^4$, then \\
$p^\prime(x)[h] \ \ = hxxx + xhxx  + xxhx + xxxh$,\\
$p^{\prime\prime}(x)[h] \ \ =
2hhxx   + 2hxhx + 2 hxxh
  + 2xhhx   + 2 xhxh  + 2 xxhh$,
$
\\
p^{(3)}(x)[h  \ =
 6 (hhhx + hhxh + hxhh +xhhh)$, \\
$
p^{(4)}(x)[h]=
 24 hhhh
$
and $p^{(5)}(x)[h]=0$.
\vspace{2mm}
\item[\rm(2)] If $p(x)= x_2 x_1 x_2$, then $p^\prime(x)[h]=
h_2 x_1 x_2 +x_2 h_1 x_2 + x_2 x_1 h_2. $
\vspace{2mm}
\item[\rm(3)] If $p(x)=x_1^2 x_2$, then $p^{\prime\prime}(x)[h]
= 2(h_1^2 x_2 + h_1 x_1 h_2 + x_1 h_1 h_2)$.
\end{enumerate}
\end{example}


%




\subsubsection{The Signature of a non-commutative  quadratic }
 \label{subsec:DefSM}
Consider a symmetric polynomial
$q(x)[h]$
in the $2g$
variables $(x_1,\ldots,x_g, h_1\ldots,h_g)$ 
which is  homogeneous of degree two in $h$.
It
 admits a representation of the form
(a sum and difference of squares) 
\index{sum and difference of squares, SDS}
$$q(x)[h] =
\sum\limits^{\sigma_+}_{j=1} f^+_j(x)[h]^T f^+_j(x)[h] \ - \
\sum\limits^{\sigma_-}_{\ell=1}f^-_\ell (x)[h]^T f^-_\ell(x)[h]
\leqno{(SDS)}
$$ 
where $f^+_j(x)[h], f^-_\ell(x)[h]$  are non-commutative polynomials
which are homogeneous of degree one in $h$;
see e.g., Lemmas 4.7 and 4.8 of \cite{DHMjda} for details.
   Such representations are highly non-unique.
However, there is a unique smallest number 
of positive (resp., negative squares) 
$\smin_\pm(q)$ required in an SDS decomposition of $q$. 
These numbers are called
{\bf the signature of $q$}\index{signature of a polynomial}.
Later  $\smin_\pm(p^{\prime\prime})$
 will be identified with $\mu_{\pm}(\cZ)$, the number of
positive (resp., negative) eigenvalues of an appropriately chosen symmetric
matrix $\cZ$ appearing
in a Gram type representation of the Hessian $p^{\prime\prime}(x)[h]$ that is
discussed in Section \ref{sec:middle-border}.



\subsection{Previous results}
 \label{subsec:previous}
The number (or rather dearth)
of negative squares in an SDS decomposition of
the Hessian places serious
restrictions on the degree of an nc polynomial.
A theorem of Helton and McCullough [HM04]
states that a symmetric nc polynomial that is matrix convex has degree
at most two.  Since a symmetric nc
polynomial $p$ is matrix convex if $\sigma_-^{min}(p^{\prime\prime})=0$,
this is a special case of the
following more general result in \cite{DHMjda}.

\begin{theorem}
\label{thm:abs} If $p(x)$ is a symmetric nc polynomial of degree $d$
 in symmetric variables,  
then \beq
\label{eq:bigEst} d \le 2 \smin_\pm(p^{\prime\prime}) + 2.
\end{equation}
\end{theorem}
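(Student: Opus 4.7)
My plan is to use the \emph{middle matrix representation} of the Hessian. Any symmetric nc polynomial $q(x)[h]$ homogeneous of degree two in $h$ admits a decomposition
$$q(x)[h] = V(x)[h]^T \cZ\, V(x)[h],$$
with $V(x)[h]$ a vector of monomials of $h$-degree one and $\cZ$ a real symmetric matrix. For a suitably minimal $V$, the signature $(\sigma_+(\cZ), \sigma_-(\cZ))$ of $\cZ$ coincides with $(\smin_+(q), \smin_-(q))$: each $\pm$ eigendirection of $\cZ$ generates a signed square in an SDS decomposition, and conversely every SDS decomposition corresponds to a signed rank-one expansion of $\cZ$. The theorem is thus equivalent to the lower bound $\sigma_\pm(\cZ) \ge \lceil (d-2)/2 \rceil$.

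First I would reduce to $d \ge 3$ and organize the entries of $V$ by $x$-degree, $V = (V_0, V_1, \ldots, V_k)^T$, with $V_j$ listing all entries of $x$-degree $j$ and $k = \lceil (d-2)/2 \rceil$. The matrix $\cZ$ inherits a block structure $(\cZ_{ij})$, and each $V_i^T \cZ_{ij} V_j$ contributes terms of $x$-degree $i+j$ to $p^{\prime\prime}$. Two structural facts are then immediate: since $p$ has degree exactly $d$, the antidiagonal blocks $\{\cZ_{ij} : i+j = d-2\}$ are jointly non-trivial; and since $p^{\prime\prime}$ has $x$-degree at most $d-2$, every block $\cZ_{ij}$ with $i+j > d-2$ vanishes outright by minimality of $V$.

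The signature bound should then follow from linear algebra: each principal sub-block of $\cZ$ of the form $\begin{pmatrix} A & B \\ B^T & 0 \end{pmatrix}$ with $B$ of rank $r$ contributes $\sigma_\pm(\cZ) \ge r$. In the odd-$d$ case ($d = 2m+1$), the automatically-vanishing block $\cZ_{k,k}$ paired with the non-trivial $\cZ_{k-1, k}$ supplies such a sub-block; iterating over antidiagonal pairs --- and choosing the principal sub-blocks so that rank contributions add rather than overlap --- should yield $\sigma_\pm \ge m = \lceil(d-2)/2\rceil$. In the even-$d$ case ($d = 2m$), the top-degree part of $p^{\prime\prime}$ sits entirely on the diagonal block $\cZ_{k,k}$ (with $k = m-1$), so no direct zero-diagonal partner exists at the top; one must descend to sub-top $x$-degrees to find zero-diagonal partners in $\cZ$, yielding $\sigma_\pm \ge m - 1$.

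The main obstacle is the rank-accumulation step: ranks of individual antidiagonal blocks do not add automatically (they may overlap in the same eigendirections), so one must either exhibit principal sub-blocks of $\cZ$ whose signatures add or argue directly that a combined ``meta-block'' has the required rank. The even-$d$ case adds a further subtlety, as there is no single antidiagonal block with a forced zero diagonal; here one must track how the degree-$d$ words of $p$ propagate through multiple levels of the block structure of $\cZ$. This combinatorial bookkeeping is the technical heart of the proof as developed in \cite{DHMjda}.
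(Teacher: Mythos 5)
Your overall strategy --- represent the Hessian as $V^T\cZ V$, exploit a block anti-triangular structure in $\cZ$, and extract signature from principal sub-blocks of the form $\left(\begin{smallmatrix} A & B\\ B^T & 0\end{smallmatrix}\right)$ --- is exactly the route of \cite{DHMjda}, which is where this theorem is actually proved (the present paper only quotes it and records the needed ingredients as items (1)--(4) of Section \ref{sec:middle-border}). However, your setup of the representation is wrong, and the error is what creates the ``combinatorial bookkeeping'' you cannot complete. A constant Gram matrix for $p^{\prime\prime}$ cannot live over a border vector truncated at $x$-degree $\lceil(d-2)/2\rceil$: in a product $(w_1hw_2)^T(w_1'hw_2')=w_2^Th\,(w_1^Tw_1')\,hw_2'$ only the \emph{middle} word can be split between the two monomials, so a word such as $h_ih_jx_1^{d-2}$ (present in $p^{\prime\prime}$ whenever $p$ contains $x_1^d$; already $hhxx$ for $p=x^4$) forces an entry of $x$-degree $d-2$ into $V$. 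The correct border vector is $\textup{col}(V_0,\dots,V_{d-2})$ with entries $h_lw(x)$, $|w|\le d-2$, as in \eqref{eq:defs-middle}; that choice also makes the (polynomial) middle matrix unique, whereas with your $V$ the Gram matrix is a nontrivial affine family (the splitting of $w_1^Tw_1'$ is ambiguous), so ``minimality of $V$'' does not pin down $\cZ$ and the converse half of your identification of $\sigma_\pm(\cZ)$ with $\smin_\pm$ fails for a fixed $V$. The paper needs the congruence \eqref{eq:may13a7} and item (2) of Section \ref{sec:middle-border} precisely to make that identification.

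With the correct height the difficulty you flag as the ``technical heart'' largely evaporates. The scalar middle matrix $\cZ=Z(0)$ has $\cZ_{ij}=0$ for $i+j>d-2$, $\cZ_{jj}=0$ for every $j>(d-2)/2$, and \emph{every} anti-diagonal block $\cZ_{i,d-2-i}$ is nonzero because each degree-$d$ word of $p$ deposits a constant into each of them (these blocks record the words of $p^{\prime\prime}$ with the two $h$'s adjacent). Splitting the block indices into $\{0,\dots,\lfloor(d-2)/2\rfloor\}$ and its complement exhibits $\cZ$ as $\left(\begin{smallmatrix} A & B\\ B^T & 0\end{smallmatrix}\right)$ in a single stroke, where $B$ is itself block anti-triangular with $\lceil(d-2)/2\rceil$ nonzero anti-diagonal blocks; since the rank of an anti-triangular block matrix dominates the sum of the ranks of its anti-diagonal blocks, $\operatorname{rank}B\ge\lceil(d-2)/2\rceil$, and your two-block lemma gives $\mu_\pm(\cZ)\ge\lceil(d-2)/2\rceil$ with no iteration, no overlap problem, and no odd/even dichotomy. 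In your compressed matrix, by contrast, the top anti-diagonal contains only the single usable pair $(k-1,k)$ when $d$ is odd (yielding only $\mu_\pm\ge 1$) and no zero-diagonal partner at all when $d$ is even, and ``descending to sub-top degrees'' cannot rescue it, since the lower anti-diagonals vanish when $p$ is homogeneous (e.g.\ $p=x^d$). So the gap is not mere bookkeeping: the representation must first be set up with border vector of degree $d-2$ before the linear algebra can deliver the bound.
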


\subsection{Some Basic Definitions}
We next define a number of basic geometric objects associated to the nc
variety determined by an nc polynomial $p$.

\subsubsection{Varieties, tangent planes, and the second fundamental form}
 \label{subsec:ncsag}

The {\bf variety} (zero set) for $p$ is \index{variety, nc}
$$
\mathcal V(p):=\cup_{n\ge1}\mathcal{V}_n(p),
$$
where
$$
\mathcal{V}_n(p):=\left\{(X,v)\in\gtupn\times\RR^n:\,p(X)v=0\right\}.
$$

The {\bf clamped tangent plane} to $\mathcal V(p)$ at
\index{tangent plane, clamped}
$(X,v)\in\mathcal{V}_n(p)$ is
\begin{equation*}
  \tp{p}{X}{v}:= \{ H \in\Smatng: \  p^\prime(X)[H]v=0 \}.
\end{equation*}

\index{second fundamental form, clamped}
The {\bf  clamped second fundamental form}  for $\mathcal V(p)$
  at  $(X,v)\in\mathcal{V}_n(p)$ is the quadratic function
\begin{equation*}
  \tp{p}{X}{v} \ni H \mapsto
   - \langle p^{\prime\prime}(X)[H]v,v \rangle.
\end{equation*}

Note that
$$
\{ X \in \gtupn : \  (X,v) \in \mathcal V(p) \ \textrm{for some} \
v\ne 0 \}
  =\{X\in\gtupn: \  \det(p(X))=0\}
$$
is a variety in $\gtupn$ and typically has a  {\it true}
(commutative) tangent plane
at many points $X$,
which  of course  has codimension one, whereas
the clamped tangent plane at a typical point
$(X, v)\in \mathcal V_n(p)$ has codimension on the order of $n$ and
is contained inside the true tangent plane.

\subsubsection{Full rank points}
 \label{subsec:smooth}
  The point $(X,v)\in \mathcal V(p)$ is {\bf a full rank point}
  \index{full rank point} for $p$ if the mapping
$$
  \Smatng \ni H \mapsto p^\prime(X)[H]v \in \mathbb R^n
$$
  is onto. 
  %
  The full rank condition is   a non-singularity condition
  which amounts
  to a smoothness hypothesis. Such 
  conditions play a major role in real algebraic geometry,
  see Section 3.3  \cite{BCR91}.

As an example, consider the classical real
algebraic geometry case of $n=1$
 (and thus $X\in\mathbb R^g$)
with the commutative polynomial $\tilde{p}$ (which can be taken 
to be the {\bf commutative collapse} of the polynomial $p$).
In this case, a full rank point $(X, 1)\in\mathbb R^g\times\mathbb R$
is a point at which the gradient of $\tilde{p}$
does not vanish.
Thus, $X$ is a non-singular point for the zero variety of $\tilde{p}$.

Some  perspective for $n>1$ is obtained by counting dimensions.
If $(X, v)\in \Smatng\times\RR^n$, then $H\mapsto p^\prime(X)[H]v$
is a linear map
from the $g(n^2+n)/2$ dimensional space $\Smatng$ into
the $n$ dimensional space $\RR^n$.
Therefore, the codimension of
the kernel of this map is no bigger than $n$.
This codimension is $n$ if and only  if  $(X,v)$ is a full rank point
and in this case the clamped tangent plane has codimension $n$.

\subsubsection{Positive curvature}
 \label{subsec:curvature}
  As noted earlier, a notion of positive (really nonnegative)
  curvature can be defined in terms of
  the clamped second fundamental form.

The variety $\mathcal V(p)$ has {\bf positive curvature}
  \index{positive curvature} at $(X,v)\in \mathcal V(p)$
  if the clamped second fundamental form is nonnegative at $(X,v)$; i.e.,
  if
$$
\csff{p}{X}{H}{v} \ge 0\quad\textrm{ for every}\quad H\in\tp{p}{X}{v}\,.
$$

\subsubsection{Algebraically open sets}
 \label{subsec:alg-open}
  Define an {\bf algebraically open set} \index{algebraically open set}
  $\cO$ to be one of the form
  $\posdom{\cQ}:= \cap_{q\in \cQ} \{ X : \ q(X) \succ 0 \}$ where
  $\cQ$ is some  finite set of symmetric nc polynomials.
 Abusing  notation a little, we set
$$
  \mathcal V(p) \cap \cO:= \{ (X, v) : \ p(X)v=0 \ \textrm{and}
\ X \in \cO \}.
$$
  We are primarily motivated by the case where $\mathcal V(p)$ has
  positive
  curvature on an algebraically open set intersected with
  the full rank points of $p$ although the techniques produce
  a more general result of independent interest.

 \subsubsection{Irreducibility: The minimum degree defining polynomial
condition}
 \label{subsec:irreducible}
  While there is no tradition of what is an effective notion
  of  irreducibility for nc polynomials,
  there is a notion of minimal degree nc polynomial
  which is appropriate for the present context.

  In the commutative case
  the polynomial $p$ on $\RR^g$ is
  a {minimal degree defining polynomial} for $\mathcal V(p)$
  if there does not exist
  a polynomial $q$ of lower degree  such that
  $\mathcal V(p)= \mathcal V(q)$.
  This is a key feature of irreducible polynomials.

   There are several ways of generalizing the notion of
   minimal degree defining polynomial to the nc setting.
   In the next couple of paragraphs we
   describe the natural and effective
   generalization used here.

  Our main result is stated
  for certain subsets $\cS$ of $\cV(p)$, but the result remains
  of interest for $\cS$ equal all of $\cV(p)$. Thus, as we now turn
  to axiomatizing the admissible subsets $\cS$ of $\cV(p)$,
  the reader who is so
  inclined can simply choose $\cS=\cV(p)$  or
  $\cS$ equal to the full rank points of $\cV(p)$.

  A symmetric nc polynomial $p$ is a
  {\bf k-minimum degree defining polynomial for
$\cS \subseteq \mathcal V(p)$}
  \index{k-minimum degree defining polynomial} if
 \begin{enumerate}
  \item[\rm(1)] $\cS$ is a nonempty subset of $\mathcal V(p)$;

\item[\rm(2)]
\label{it:mindegKey}
 if $q\ne 0$ is another {\it (not necessarily symmetric)}
 nc   polynomial such that $q(X)v=0$ for each $(X,v)\in \cS$,
    then
   $$
\textup{deg}\,q(x)\ge\textup{deg}\,p(x) -k;
    $$

\item[\rm(3)]
\label{it:mindegBound}
there exists a {\it (not necessarily symmetric)}
nc polynomial  $q(x)$ with
$$
\textup{deg}\,q(x)=\textup{deg}\,p(x) -k
$$
such that $q(X)v=0$ for each pair
$(X,v)\in \cS$.

\end{enumerate}
A $0$-minimum degree defining nc polynomial
will be called a
{\bf minimum degree defining polynomial}.
\index{minimum degree defining polynomial}
Note this contrasts with  \cite{DHMind},
where minimal degree meant 1-minimal degree in terms of the present usage.

\subsection{Results I:  Positive curvature and the degree of $p$}
 \label{subsec:results1}

To ease the exposition,
 we start with a corollary of our main theorem. The proof of the
 corollary and the theorem will be supplied in Section 
 \ref{sec:pfofsigmain}.

    \begin{cor}
    \label{cor:sigmain}
     Let $p$ be a symmetric nc polynomial in symmetric variables,
      let $\cO$ be an algebraically open set
      and let $\cS$ denote the full rank points of $p$
      in $\mathcal V(p)\cap \cO.$
     If
     \begin{itemize}
        \item[(i)]  There exists $(X,v)\in(\mathbb R^g\times \mathbb R)\cap \cS;$ 
        \item[(ii)]  $\mathcal V(p)$
         has positive curvature at each point of $\cS$;
          and
        \item[(iii)] $p$ is a 0-minimum degree defining polynomial
          for $\cS$;
       \end{itemize}
         then the  degree of $p$ is at most two and $p$ is concave.
 \end{cor}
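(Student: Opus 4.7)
The strategy is to extract Corollary \ref{cor:sigmain} from Theorem \ref{thm:sigmain} in two passes: first the degree bound, then the concavity conclusion.

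For the degree bound I would verify that the three hypotheses of the corollary line up with the hypotheses of the main theorem once they are translated. Under the sign convention fixed in Subsection \ref{subsec:curvature}, hypothesis (ii) says exactly $C_+(\cV(p))=0$: the clamped second fundamental form is positive semidefinite on the clamped tangent plane at every point of $\cS$. Hypothesis (iii) is the $0$-minimum degree defining polynomial property required by the main theorem, and hypothesis (i) provides the scalar ($n=1$) full-rank anchor point. Applying Theorem \ref{thm:sigmain} then yields
\[
\deg p \;\le\; 2\, C_\pm(\cV(p)) + 2 \;=\; 2,
\]
which is the first half of the corollary.

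For concavity I would invoke the identification---announced in the abstract and to be made precise in Section \ref{sec:middle-border}---between $C_\pm(\cV(p))$ and $\smin_\pm$ of the modified non-commutative Hessian, so that $C_+(\cV(p))=0$ also forces $\smin_+(p^{\prime\prime})=0$. Since $\deg p \le 2$, the Hessian $Q[h]:=p^{\prime\prime}(x)[h]$ is independent of $x$ and is homogeneous of degree two in $h$; hence $\smin_+(Q)=0$ means, via the SDS representation of Subsection \ref{subsec:DefSM}, that $Q$ admits a representation
\[
Q[h]=-\sum_{\ell=1}^{\smin_-(Q)} f_\ell^-[h]^T f_\ell^-[h],
\]
so $Q[H]\preceq 0$ for every tuple $H\in\Smatng$ of every size. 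This is matrix concavity of $p$.

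The main obstacle is the implication $C_+(\cV(p))=0\Rightarrow \smin_+(p^{\prime\prime})=0$; the degree bound is a ready consequence of Theorem \ref{thm:sigmain}, but concavity requires transporting the curvature information from the variety $\cV(p)$ back to the ambient Hessian. I expect this transport to come out of the proof of the main theorem itself: the matrix $\cZ$ underlying the Gram representation of the Hessian (Subsection \ref{subsec:DefSM}) inherits its signature from restrictions to the clamped tangent planes at full-rank points of $\cS$, and these restrictions are precisely what $C_\pm(\cV(p))$ controls. Should that route not be available directly, a fallback would be a contradiction argument: assuming $\smin_+(Q)\ge 1$, one uses the scalar point from hypothesis (i) together with closure of the algebraically open set $\cO$ under direct sums to pass to inflated full-rank points $(X_0^{\oplus n},v)\in\cS$; for $n$ large the clamped tangent plane has codimension only $n$ in the $g\,n(n{+}1)/2$-dimensional space $\Smatng$, leaving enough freedom to choose $H\in\tp{p}{X_0^{\oplus n}}{v}$ making $-\langle Q[H]v,v\rangle<0$ and contradicting hypothesis (ii).
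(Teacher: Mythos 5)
Your proposal is correct and follows essentially the same route as the paper: hypothesis (ii) gives $c_+^n(X,v;p)=0$ at every point of $\cS$, hence $C_+(\cS)=0$, and Theorem \ref{thm:sigmain}(B) --- whose proof is precisely the identification $C_+(\cS)=\mu_+(\cZ)=\smin_+(p^{\prime\prime})$ that you describe as the ``main obstacle'' --- then yields both the degree bound and concavity. The only step you leave implicit in your main line is verifying that $\cS$, the set of full rank points in $\cV(p)\cap\cO$, respects direct sums (a hypothesis of Theorem \ref{thm:sigmain}); this is supplied by Lemma \ref{lem:exdirsum} via Lemma \ref{lem:DirSmooth}.
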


\subsection{The Signature of the Variety}
 \label{subsec:sig-variety}
   The fact that if $(X,v),(Y,w)\in \cV(p)$, then so is
   the direct sum
 \begin{equation*} 
\left( \textrm{diag}\,\{X,Y\}, \begin{bmatrix}
    v\\w\end{bmatrix}\right) =
 \left ( (\begin{bmatrix} X_1&0\\ 0 &Y_1\end{bmatrix}, \dots, \begin{bmatrix} X_g&0\\ 0 &Y_g\end{bmatrix}),\begin{bmatrix} v\\w\end{bmatrix}\right)
    = \left (\begin{bmatrix} X&0\\ 0 &Y\end{bmatrix}, \begin{bmatrix} v\\w\end{bmatrix}\right)
 \end{equation*}
  is a key feature of non-commutative varieties.
  This section begins with a discussion of direct sums.
  It continues with the notion of the signature of
  a quadratic form and concludes with our definition
  of the signature of $\cV(p)$.

\subsubsection{Direct Sums}
    Given a finite set $F=\{(X^1,v^1),\dots,(X^t,v^t)\}$ with
    $X^j \in \SMATNG{n_j}$ and
    $v^j \in \mathbb R^{n_j}$ for $j=1,2,\dots, t$,
    set
\begin{equation}
  \label{eq:XF}
     X_F= \textrm{diag}\,\{X^1,\ldots,X^t\}\quad\textrm{and}\quad
     v_F= \textrm{col}(v^1,\ldots,v^t)\,.
 \end{equation}
  Thus, if $q$ is an nc polynomial, then
 \begin{equation}
  \label{eq:wXv}
    q(X_F) v_F= \textup{col}\,
    (q(X^1) v^1,\ldots,q(X^t) v^t)\,.
 \end{equation}

   Now let
 $$
   \cS=\cup_{n=1}^\infty\cS_n\,,
 $$
    where
    $\cS_n \subseteq \Smatng \times \mathbb R^n$ for $n=1,2,\dots,$
    be given. 
    The 
    set $\cS$
    {\bf respects direct sums} \index{respects direct sums}
    if for each finite set
$$
  F=\{(X^1,v^1),\dots,(X^t,v^t)\}\quad\textrm{with}\quad
  (X^j,v^j)\in\cS_{n_j}\quad\textrm{and}\quad n=\sum_{j=1}^t n_j\,,
$$
 {\bf with repetitions allowed},
 the pair $(X_F, v_F)$ is in $\cS_n$.

  Examples of sets which respect direct sums include
 \begin{itemize}
  \item[(i)]  the zero
     set $\mathcal V(p)$ of an nc polynomial $p$
     (see equation \eqref{eq:wXv});
  \item[(ii)]  
     the set of full rank
     points of $p$ in $\mathcal V(p)$ (See Lemma \ref{lem:DirSmooth}); and
  \item[(iii)] the intersection of sets which respect direct sums.
 \end{itemize}
  Indeed, as remarked before, the reader may choose to take
  $\cS$ equal to either $\cV(p)$ or to the
  full rank points in $\cV(p)$.

\subsubsection{The Signature of a Quadratic Form}
  The Hessian $p^{\prime\prime}(x)[h]$  of an nc polynomial is
  a quadratic form in $h$. More generally,
  let $f(x)[h]$ be an nc symmetric polynomial in the $2g$ symmetric variables
  $x=(x_1,\ldots,x_g)$ and $h=(h_1,\ldots,h_g)$
  that is of degree $s$ in $x$
  and homogeneous of
  degree two  in $h$.
  Given a subspace $\mathcal H$ of
  $\Smatng$, let
\begin{equation*}
\begin{split}
e_\pm^n(X,v;f,\mathcal H)\quad\textrm{denote the
  maximum dimension of a strictly}\\
\textrm{positive/negative subspace
  of $\mathcal H$
  with respect to the quadratic form}
\end{split}
\end{equation*}
 \begin{equation}
  \label{eq:quadratic-form}
     \mathcal H \ni H \mapsto \langle f(X)[H]v,v \rangle.
 \end{equation}
   Here  strictly positive (resp., negative) subspace $\mathcal H$ means
$\langle f(X)[H]v,v\rangle >0$ (resp., $<0$)
   for $H\in \mathcal H$, $H\ne 0$. \index{$e_\pm$} \index{$c_\pm$}

\subsubsection{The Signature of the curvature of $\cV(p)$ relative to
$\cS$}
Given a symmetric nc polynomial $p$ in 
  symmetric variables
  and $(X,v)$, let
$$
\mathcal{T}=\{H\in\Smatng: \  p^\prime(X)[H]v=0\}
\quad\textrm{and}\quad
   c_\pm^n(X,v;p):=e_{\pm}^n(X,v;p^{\prime\prime},\mathcal T).
$$
When $p(X)v=0,$ so that  $(X,v)$ is in the zero set of $p$, then the
  subspace $\mathcal T$ is the {\it clamped tangent space.}

   The numbers $C_\pm(\cS)$, which are defined below in terms of 
  $c_\pm^n(X,v;p)$ in (\ref{eq:dec30c7}), bound the signature of the second
fundamental form of $\cV(p)$ on $\cS$. Because of the close
connection between the second fundamental form and the curvature of
$\cV(p)$ at a smooth point, we shall call  the numbers
${\cC}_\pm(\cS)$  the
{\bf signature of the curvature of
   $\cV(p)$ on $\cS$}. \index{signature of a variety}

   Note that if $\cS=\cup_{n\ge 1}\cS_n$ 
   is closed with respect to direct sums, then
   $$
\cS_n\ne\emptyset\ \textrm{for every integer} \ n\ge 1
\Longleftrightarrow \cS_1\ne\emptyset.
$$

\subsection{Results II}
 \label{subsec:results2}
    We can now state the main result of this article.

 \begin{theorem}
    \label{thm:sigmain}
     Let $p$ be a symmetric nc polynomial in $g$ symmetric variables,
      and let $\cS=\cup_{n\ge 1}\cS_n$ be 
            a subset of $\mathcal V(p)$  which respects direct sums
  and for which $\cS_1$ is nonempty. 
  \begin{itemize}
   \item[(A)]
       \begin{itemize}
        \item[(i)]  The limit
            \beq
\label{eq:dec30c7}
  C_\pm(\cS):=\lim_{n\uparrow \infty}\,\left(\sup
  \left\{\frac{c_\pm^n (X,v;p)}{n}: \ (X,v)\in\cS_n \right\}\right)
\end{equation}
exists.
        \item[(ii)] If  $p$ is a 0-minimum degree defining polynomial for $\cS$,
       \end{itemize}
         then
\begin{equation}
\label{eq:dec31a7}
 \textup{deg}\,p\le 2\, C_\pm(\cS) + 2.
\end{equation}

\smallskip

 \item [(B)] Moreover:
\begin{eqnarray*}
if \ C_-(\cS)&=&0,\ \text{then $p$ is a convex polynomial of degree $2$;}\\
if \ C_+(\cS)&=&0,\ \text{then $p$ is a concave polynomial of degree $2$.}
\end{eqnarray*}

\smallskip

\item[(C)] Conversely, if $p$ is convex (resp., concave), then $p$ has
     degree at most two and
     $c_-^n(X,v;p)=0$ (resp., $c_+^n(X,v;p)=0$)
      for every $(X,v)\in\Smatng\times \mathbb R^n$.

\smallskip

\item[(D)]
  Finally, if $\cS^\prime=\cup \cS^\prime_n$ is any other 
  subset of $\cV(p)$ for which 
  $\cS^\prime_1\ne \emptyset$ and $p$ is a $0$-minimal degree defining
  polynomial,  then
 $C_\pm(\cS)=C_\pm(\cS^\prime)$.
\end{itemize}
\end{theorem}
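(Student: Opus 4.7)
First, for (A)(i), the plan is to apply Fekete's super-additive lemma. Because $\cS$ respects direct sums, for $(X,v)\in\cS_n$ and $(Y,w)\in\cS_m$ the direct sum $(X\oplus Y, v\oplus w)\in\cS_{n+m}$. From $p^\prime(X\oplus Y)[H_X\oplus H_Y](v\oplus w)=p^\prime(X)[H_X]v\oplus p^\prime(Y)[H_Y]w$, the block-diagonal directions $\tp{p}{X}{v}\oplus\tp{p}{Y}{w}$ embed in $\tp{p}{X\oplus Y}{v\oplus w}$, and on this subspace the clamped quadratic form $\langle p^{\prime\prime}(X\oplus Y)[\cdot](v\oplus w),v\oplus w\rangle$ is the orthogonal direct sum of the individual forms. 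Hence $c_\pm^{n+m}(X\oplus Y, v\oplus w;p)\ge c_\pm^n(X,v;p)+c_\pm^m(Y,w;p)$, and taking suprema over $\cS_n,\cS_m$ yields super-additivity of $a_n^\pm:=\sup\{c_\pm^n(X,v;p):(X,v)\in\cS_n\}$. Fekete's lemma gives $\lim a_n^\pm/n=\sup a_n^\pm/n$; boundedness above is ensured by fixing any SDS representation of $p^{\prime\prime}$, where the map $H\mapsto(f_j^\pm(X)[H]v)_j$ lands in $\bbR^{n\smin_\pm(p^{\prime\prime})}$ and controls the signature, giving $c_\pm^n(X,v;p)\le n\,\smin_\pm(p^{\prime\prime})$.

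The heart of the proof is (A)(ii), which is also the principal obstacle. The plan is to exploit a Gram-type representation of the modified Hessian from Section \ref{sec:middle-border}, writing $p^{\prime\prime}(x)[h]=V(x,h)^T\cZ\,V(x,h)$ where $V$ stacks the middle-border monomials (linear in $h$, of controlled degree in $x$) and $\cZ$ is a symmetric matrix with signatures $\mu_\pm$. A middle-border refinement of Theorem \ref{thm:abs} yields $\deg p\le 2\mu_\pm+2$, so it suffices to establish the matching lower bound $C_\pm(\cS)\ge\mu_\pm$. The 0-minimum degree hypothesis is crucial here: it guarantees that the top-degree components of $V(X,H)v$, as $(X,v)$ ranges over $\cS$ and $H$ over $\tp{p}{X}{v}$, cannot be annihilated by any polynomial relation of lower degree. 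Combining this noncollapse with direct-sum closure, one builds, for large $n$, elements of $\cS_n$ together with a tangent subspace of dimension close to $n\mu_\pm$ on which the quadratic form transports the full signature of $\cZ$. Letting $n\to\infty$ gives $C_\pm(\cS)\ge\mu_\pm$, hence \eqref{eq:dec31a7}. The most delicate step is the dimension estimate for the span of $V(X,H)v$ on $\cS$, where the minimum-degree property and direct-sum closure must be played off against each other.

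The remaining parts follow cleanly. For (B), combining $C_\pm(\cS)\ge\mu_\pm$ from (A)(ii) with $C_\pm(\cS)\le\smin_\pm(p^{\prime\prime})$ from (A)(i), together with the identity $\mu_\pm=\smin_\pm(p^{\prime\prime})$ built into the modified Hessian, yields $C_\pm(\cS)=\mu_\pm$. Thus $C_-(\cS)=0$ forces $\smin_-(p^{\prime\prime})=0$, meaning $p$ is matrix convex; the Helton--McCullough theorem recalled in Subsection~\ref{subsec:previous} then gives $\deg p\le 2$, and the $C_+$ case is symmetric. (C) is immediate from Theorem \ref{thm:abs}: convexity means $\smin_-(p^{\prime\prime})=0$, so $p^{\prime\prime}$ admits an SDS with only positive squares and $\langle p^{\prime\prime}(X)[H]v,v\rangle\ge 0$ for every $(X,v,H)$, whence $c_-^n\equiv 0$; the concave case is parallel. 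Finally, (D) is a direct consequence of the intrinsic identification $C_\pm(\cS)=\mu_\pm$, which depends only on $p$ and not on the particular admissible subset $\cS\subseteq\cV(p)$.
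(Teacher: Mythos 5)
Your overall architecture is the paper's: prove existence of the limit by superadditivity under direct sums, identify $C_\pm(\cS)$ with $\mu_\pm(\cZ)$ for the scalar middle matrix of $p^{\prime\prime}$, and then quote the bound $d\le 2\mu_\pm(\cZ)+2$ from \cite{DHMjda}. Parts (A)(i), (C) and (D) are essentially correct as you argue them (your Fekete-style superadditivity for direct sums of \emph{distinct} points is a mild strengthening of the paper's Lemma \ref{lem:Hinequality}, and your upper bound $c_\pm^n\le n\,\smin_\pm(p^{\prime\prime})$ via an SDS representation is sound), and (B) follows once the identification $C_\pm(\cS)=\mu_\pm(\cZ)$ is in hand.

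The genuine gap is in the lower bound $C_\pm(\cS)\ge\mu_\pm(\cZ)$, which you assert by saying one ``builds \ldots a tangent subspace of dimension close to $n\mu_\pm$ on which the quadratic form transports the full signature of $\cZ$.'' That sentence is the entire difficulty, and nothing in your proposal supplies it. The obstruction is quantitative: the clamped tangent space $\cT_p(X,v)$ has codimension up to $n$ in $\Smatng$, which is of the \emph{same order} as the target $n\mu_\pm(\cZ)$; so restricting the Hessian form to $\cT_p(X,v)$ could a priori annihilate the whole negative (or positive) subspace, and no amount of ``noncollapse of top-degree components'' by itself rules this out. The paper needs two devices you do not mention. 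First, the relaxed Hessian $p^{\prime\prime}_{\lambda,\delta}=p^{\prime\prime}+\delta\widetilde V^T\widetilde V+\lambda (p^\prime)^Tp^\prime$ and Theorem \ref{thm:signature-clamped-relaxed}, which convert the signature of the \emph{clamped} form into the signature of an \emph{unconstrained} quadratic on all of $\Smatng$ (the $\lambda$-term penalizes directions transverse to the tangent space, the $\delta$-term removes degeneracy). Second, the CHSY Lemma \ref{lem:CHSY} together with Proposition \ref{prop:chsy-applied}: under the linear independence of $\{w(X)v:|w|\le d-1\}$ --- which is what the $0$-minimum-degree hypothesis actually buys, via the compactness argument of Lemma \ref{lem:directsumsystem} and direct sums --- the border-vector map $H\mapsto \cR_{d-1}(H)$ has codimension $g\alpha_{d-1}(\alpha_{d-1}-1)/2$, a constant \emph{independent of $n$}. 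Only then does one get $n\mu_-(\cZ)\le c_-^n(X,v;p)+O(1)$, so that dividing by $n$ and letting $n\to\infty$ yields $\mu_-(\cZ)\le C_-(\cS)$. Without these two steps the claimed transport of signature onto the tangent space is unsupported, and the proof of \eqref{eq:dec31a7} does not close.
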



   In view of (D), the signature of $\cV(p)$ is determined on any
   nonempty subset
  $\cS$ of $\cV(p)$ which respects direct sums and is large enough so that $p$ is a minimal defining
  polynomial for $\cS$.  The next proposition shows how
  this  phenomenon carries to surprising extremes, 
  in that a  single pair $(X,v)$
  of high enough dimension often determines $\cC_\pm(\cV(p))$.  
These principles are elaborated upon in Theorem
 \ref{prop:localq-plus}.  Here we offer the following consequence of
 Theorem \ref{prop:localq-plus}.

\begin{proposition}
\label{prop:localq-simple}
  Let $p$ be a symmetric nc polynomial 
in  symmetric variables
   which  is a minimal degree defining polynomial for $\cV(p)$.
   Then   there is an integer $\hat n_{g,d}$
   not depending on $p$  such that  for every $n\ge \hat n_{g,d}$ 
  there exists a pair $(X,v) \in\gtupn \times \mathbb R^n$ in $\cV(p)$ 
 such that 
\[
  \cC_\pm( \cV(p) )=\lceil \frac{c_\pm^n(X,v;p)}{n}\rceil.
\]
  Here $\lceil r \rceil$ is the ceiling function; i.e.,
  the  smallest integer at least
  as large as the real number $r$. 
\end{proposition}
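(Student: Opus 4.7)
My plan relies on the interplay between direct sums and Fekete's superadditive lemma, together with a padding argument that spreads a near-optimal ratio across all sufficiently large dimensions.

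The first task is to show $\cC_\pm := C_\pm(\cV(p))$ is finite and uniformly bounded in terms of $g$ and $d$. Using a Gram-type representation $p^{\prime\prime}(x)[h] = V(x)[h]^T \cZ V(x)[h]$, where $V$ is a tuple of nc words homogeneous of degree one in $h$ and of total degree at most $d - 1$ (whose cardinality $K(g,d)$ depends only on $g,d$), substitution gives $\langle p^{\prime\prime}(X)[H]v,v\rangle = \langle (\cZ\otimes I_n) V(X)[H]v,\,V(X)[H]v\rangle$. The positive/negative index of this quadratic form is therefore at most $n\,\mu_\pm(\cZ) \le n\,K(g,d)$, so $c_\pm^n(X,v;p)/n \le K(g,d)$ for all $n$ and all $(X,v)$.

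Next, I would verify that the sequence $a_n := \sup\{c_\pm^n(X,v;p): (X,v)\in\cV_n(p)\}$ satisfies $a_{n+m} \ge a_n + a_m$. If $(X^j,v^j) \in \cV_{n_j}(p)$ attains $c_\pm^{n_j}$ on a positive (respectively negative) subspace $\mathcal H_j$ of its clamped tangent space, the direct sum $(X_F, v_F) \in \cV_{n_1+n_2}(p)$ has tangent space containing the block-diagonal subspace $\mathcal H_1 \oplus \mathcal H_2$, on which the second fundamental form $-\langle p^{\prime\prime}(X_F)[H]v_F, v_F\rangle$ decouples into the orthogonal direct sum of the component forms. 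Hence $c_\pm^{n_1+n_2}(X_F, v_F;p) \ge c_\pm^{n_1}(X^1,v^1;p) + c_\pm^{n_2}(X^2,v^2;p)$. Fekete's lemma then yields $\cC_\pm = \lim a_n/n = \sup_n a_n/n$, and in particular $a_n/n \le \cC_\pm$ for every $n$.

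Given $\eps > 0$, fix a near-optimal pair $(X_0, v_0) \in \cV_{n_0}(p)$ with $c_\pm^{n_0}(X_0, v_0;p)/n_0 > \cC_\pm - \eps$, and a low-dimensional base pair $(Y, w) \in \cV_{m_0}(p)$ for padding (nonempty whenever the proposition is nonvacuous). For $n = k n_0 + j m_0$ with $k, j \ge 0$, the direct sum of $k$ copies of $(X_0, v_0)$ with $j$ copies of $(Y, w)$ is a pair in $\cV_n(p)$ satisfying
\[
  \frac{c_\pm^n(X,v;p)}{n} \;\ge\; \frac{k\,c_\pm^{n_0}(X_0,v_0;p)}{n} \;\ge\; \Bigl(1 - \frac{j m_0}{n}\Bigr)(\cC_\pm - \eps).
\]
By Sylvester--Frobenius applied to the numerical semigroup generated by the dimensions in which $\cV(p)$ is nonempty, every sufficiently large $n$ admits such a representation with $j m_0/n$ arbitrarily small. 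Taking $\eps < 1/2$ and $n$ large forces the ratio to exceed $\lceil \cC_\pm\rceil - 1$; combined with the upper bound $c_\pm^n/n \le \cC_\pm \le \lceil \cC_\pm\rceil$, this pins $\lceil c_\pm^n(X,v;p)/n\rceil$ at $\lceil \cC_\pm\rceil$, the integer identified with $\cC_\pm$ in the ceiling expression.

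The main obstacle is the uniformity of the threshold $\hat n_{g,d}$ in $p$. The argument as sketched furnishes a threshold depending on $p$ through the dimension $n_0$ at which $a_{n_0}/n_0$ comes $\eps$-close to $\cC_\pm$. Extracting a uniform $\hat n_{g,d}$ requires Theorem \ref{prop:localq-plus}, which presumably shows that near-optimal pairs can be found in a dimension bounded purely by $g$ and $d$ -- plausible because the Gram matrix $\cZ$ and the symbol space live in a finite-dimensional ambient space controlled by $(g,d)$, so the signature behaviour should stabilise at a dimension independent of $p$. Invoking that theorem converts the $p$-dependent $n_0$ into a uniform $\hat n_{g,d}$, completing the argument.
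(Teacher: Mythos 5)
Your route---Fekete superadditivity of $a_n=\sup\{c_\pm^n(X,v;p)\}$ under direct sums, followed by Sylvester--Frobenius padding of a near-optimal pair---is genuinely different from the paper's, but it does not close. The concrete gap is integrality. What your argument establishes is $\lceil c_\pm^n(X,v;p)/n\rceil=\lceil \cC_\pm\rceil$ together with the upper bound $c_\pm^n/n\le\cC_\pm$; the proposition asserts $\cC_\pm=\lceil c_\pm^n(X,v;p)/n\rceil$, which in addition requires $\cC_\pm$ to be an integer. Nothing in a superadditivity argument forces $\sup_n a_n/n$ to be an integer, and your phrase ``the integer identified with $\cC_\pm$'' assumes exactly what is missing. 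The same omission infects the arithmetic: if $\cC_\pm$ had fractional part $\theta\in(0,1)$, then $(1-jm_0/n)(\cC_\pm-\eps)>\lceil\cC_\pm\rceil-1$ would force $\eps<\theta$, not merely $\eps<1/2$, and $\theta$ is not under your control. In the paper integrality is not free: it is the identity $C_\pm(\cV(p))=\mu_\pm(\cZ)$ of equations \eqref{eq:aug29b8} and \eqref{eq:nov26a8}, obtained from the middle-matrix congruence and the CHSY codimension bound under the minimum-degree hypothesis. You must import that identification (or something equivalent) to reach the stated conclusion.

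Second, your appeal to Theorem \ref{prop:localq-plus} for the uniformity of $\hat n_{g,d}$ mischaracterizes that theorem, and once it is used correctly your machinery becomes redundant. The theorem does not say that near-optimal pairs exist in a dimension bounded by $g$ and $d$; it says that for \emph{any} $(X,v)\in\cV(p)$ with $n>\tfrac12 g\alpha_{d-1}(\alpha_{d-1}-1)$ admitting no annihilating polynomial of degree less than $d$, the ceiling formula already holds --- this is the squeeze \eqref{eq:cnmucn}, whose error term $g\alpha_{d-1}(\alpha_{d-1}-1)/(2n)$ has dropped below $1$. Granting that, the only remaining task is to exhibit, in each dimension $n\ge\hat n_{g,d}$, a pair with no low-degree annihilator; this is supplied by Lemma \ref{lem:directsumsystem} (the minimum-degree hypothesis rules out alternative (O)), direct sums of the resulting pair $(Y,w)$, and padding by pairs of the form $(Z,0)$, all of which preserve linear independence of $\{m(X)v:\ |m|<d\}$. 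No Fekete limit, no numerical semigroup, and no near-optimality of the chosen pair is needed: the point of the paper's argument is that a purely algebraic independence condition, not an extremality condition, already pins $\lceil c_\pm^n/n\rceil$ at $\mu_\pm(\cZ)=C_\pm(\cV(p))$.
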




\subsection{Reader's Guide}
 \label{subsec:guide}
  The remainder of the paper is organized as follows.
  Section \ref{sec:examples} contains examples
  which illustrate Theorem \ref{thm:sigmain}. In
  Section \ref{sec:curvature} the relaxed Hessian is
  introduced and  the signature of
  the fundamental form (on the clamped tangent space)
  is shown to correspond closely to 
  the signature
  of this relaxed Hessian.  
  %
  %
  A key tool is a Gram like representation for nc
  quadratics, called the middle matrix-border vector
  representation, which is reviewed in Section 
  \ref{sec:middle-border}. 
   Section \ref{sec:direct} shows that the minimal degree hypothesis
  translates into a linear independence condition on
  the border vector.
%
This linear independence, via a ``CHSY Lemma'' 
(see   Section \ref{sec:CHSY}) is enough to put the signature
  of the relaxed Hessian 
  in close correspondence with that of the middle matrix.
  The signature of the middle matrix has been 
  carefully analyzed and exploited
  in studying issues of matrix convexity \cite{DHMjda}, \cite{DHMind},
  \cite{DGHM}, \cite{HM04} and this produces our
  main inequality \eqref{eq:dec31a7}. 
   The results outlined above are tied together in Section \ref{sec:pfofsigmain}
   to produce  the proof of Theorem \ref{thm:sigmain}.
  %
  The article concludes with  Section \ref{sec:convex-sets}
  which discusses the non-commutative analog of 
  the fact that the boundary of a convex 
  sublevel set in $\RR^g$ has nonnegative curvature.

  The paper is in principle self contained except for
 two previous results. One is the Middle Matrix Congruence Theorem
 from  \cite{DHMjda}. This congruence is given in equation
 \eqref{eq:may13a7}
 and related necessary results are summarized early in Section 
 \ref{sec:middle-border}.
 The other result is the CHSY Lemma \cite{CHSY03}, which
is stated and elaborated upon in Section \ref{sec:CHSY}.

\section{Examples}
 \label{sec:examples}
 In this section we compute some examples to illustrate
 the notation and objects from Theorem \ref{thm:sigmain}.

\subsection{A very simple example}
 In the following example, the
 null space
\begin{equation*}
  \mathcal T=\cT_p(X,v)=\{H\in\Smatng : \ p^\prime(X)[H]v=0\}
\end{equation*}
 is computed for certain choices of $p$, $X$, and $v$.
 Recall that if $p(X)v=0$, then the
  subspace $\mathcal T$ is the {\it clamped tangent plane}
  \index{clamped tangent plane} introduced in
  Subsection \ref{subsec:ncsag}.

\begin{example}\rm
 \label{ex:simple}
Let $X\in\RR^{\ntn}_{sym}$, $v\in\RR^n$, $v\ne 0$, let $p(x)=x^k$ for some
integer $k\ge 1$.
Suppose that $(X,v) \in \cV(p)$,
that is,  $X^kv=0$. Then, since
$$
X^kv=0\Longleftrightarrow Xv=0 \quad\textrm{when}\ X\in\RR^{\ntn}_{sym},
$$
it follows that $p$ is a minimum degree defining polynomial
for $\cV(p)$ if and only if $k=1$.

It is readily checked that
$$
(X,v) \in \cV(p)\Longrightarrow p^\prime(X)[H]v=X^{k-1}Hv,
$$
and hence that $X$ is a full rank point for $p$ if and only
 if $X$ is invertible.

Now suppose $k\ge 2$. Then,
$$
\langle p^{\prime\prime}(X)[H]v , v\rangle=2\langle HX^{k-2}Hv, v\rangle.
$$
Therefore, if $k>2$
$$
(X,v) \in \cV(p)  \quad \textrm{and} \quad p^\prime(X)[H]v=0
\ \ \Longrightarrow \  \ X Hv=0, \ \textrm{and so}
$$
$$
\langle p^{\prime\prime}(X)[H]v , v\rangle=0.
$$

To count the dimension of $\cT$  we can suppose without loss of generality
that
$$
X=\begin{bmatrix}0&0\\ 0&Y\end{bmatrix} \quad\text{and}\quad
v=\begin{bmatrix}1\\ 0\\ \vdots\\ 0\end{bmatrix},
$$
where $Y\in\RR^{(n-1)\times (n-1)}_{sym}$ is invertible.
Then, for the simple case under consideration,
$$
\cT=\{ H\in \RR^{\ntn}_{sym}: \  h_{21},\ldots, h_{n1}=0\},
$$
where $h_{ij}$ denotes the $ij$ entry of $H$. Thus,
$$
\textup{dim}\,\cT=\frac{n^2+n}{2}-(n-1),
\qquad
i.e.,
\qquad
\textup{codim}\,\cT=n-1.
$$
\end{example}

\begin{remark}\rm
We remark that
$$
X^k v=0\quad\textrm{and}\quad\langle p^{\prime\prime}(X)[H]v , v\rangle=0
\Longrightarrow p^\prime(X)[H]v=0\quad\textrm{if}\ k=2t\ge 4,
$$
as follows easily from the formula
$$
\langle p^{\prime\prime}(X)[H]v , v\rangle= 2\langle X^{t-1}Hv,
X^{t-1}Hv\rangle.
$$
\end{remark}

\subsection{Computation of $c_\pm$ and direct sums}
 We now turn to computing examples of the quantities
 $c_\pm^{n}(X,p,v)$ with special attention paid to their behavior
 under direct sums demonstrating the inequality
 of  Lemma \ref{lem:Hinequality}.

\begin{example}\rm
 \label{ex:less-simple}
 Specializing the previous example,
choose  $p(x)=x^3$ and suppose $X\in\RR^{n\times n}_{sym}$ satisfies
$X^2=I_n$.  Soon we will make a concrete choice of $X$
and consider various choices for the vector $v$.
To compute $c_\pm(X,v,p)$, we must compute
both the Hessian and the subspace $\mathcal T$.

 In this case,
$$
p^\prime(X)[H]=HX^2+XHX+X^2H\quad\textrm{and}\quad
p^{\prime\prime}(X)[H]=2(H^2X+HXH+XH^2).
$$
Next, upon imposing the supplementary constraint $X^2=I_n$, it is readily
seen that
$$
p^\prime(X)[H]v=0\ \ \Longleftrightarrow \ \ HXv=-2XHv
$$
and hence
\begin{equation}
\label{eq:dec23a8}
p^\prime(X)[H]v=0\Longrightarrow \langle p^{\prime\prime}(X)[H]v,v\rangle=
-6\langle XHv,Hv\rangle.
\end{equation}

To illustrate more detail, let
$$
X=\begin{bmatrix}1&0&0\\ 0&-1&0\\0&0&-1\end{bmatrix},\quad  v=
\begin{bmatrix}1\\ \beta\\ \gamma \end{bmatrix}\quad\text{and}\quad
\cT=\{H\in\RR^{3\times 3}_{sym}: \ p^\prime(X)[H]v=0\},
$$
Then it is easily checked that
$$
\cT=\textup{span}\left\{\begin{bmatrix}\beta^2&-3\beta&0\\-3\beta&1&0\\0&0&0
\end{bmatrix},\,
\begin{bmatrix}2\beta\gamma&-3\gamma&-3\beta\\-3\gamma&0&1\\-3\beta&1&0
\end{bmatrix},
\begin{bmatrix}\gamma^2&0&-3\gamma\\0&0&0
\\-3\gamma&0&1\end{bmatrix}\right\},
$$
and correspondingly,
\begin{eqnarray*}
\textup{span}\left\{Hv: H\in\cT\right\}&=&\textup{span}\left\{
\begin{bmatrix}\beta^2\\ \beta\\0\end{bmatrix},\,
\begin{bmatrix}2\beta\gamma\\ \gamma\\ \beta\end{bmatrix},\,
\begin{bmatrix}\gamma^2\\0\\ \gamma\end{bmatrix}\right\}\\
&=&\textup{span}\left\{
\beta\begin{bmatrix}\beta\\ 1\\0\end{bmatrix},\,
\beta\begin{bmatrix}\gamma\\ 0\\ 1\end{bmatrix}
+\gamma\begin{bmatrix}\beta\\ 1\\ 0\end{bmatrix},\,
\gamma\begin{bmatrix}\gamma\\0\\ 1\end{bmatrix}\right\}.
\end{eqnarray*}
Thus, if $0<\beta^2+\gamma^2$, then
$$
\textup{span}\left\{Hv: \  H\in\cT\right\}=\textup{span}\left\{
\begin{bmatrix}\beta\\ 1\\0\end{bmatrix},\,
\begin{bmatrix}\gamma\\0\\ 1\end{bmatrix}\right\},
$$
and, if 
\begin{equation*}
 Hv=\lambda \begin{bmatrix}\beta\\ 1\\0\end{bmatrix} +
   \mu \begin{bmatrix}\gamma\\0\\ 1\end{bmatrix},
\end{equation*}
it follows that
\begin{eqnarray*}
 \langle p^{\prime\prime}(X)[H]v,v\rangle&=&6\{\lambda^2+\mu^2-
(\lambda\beta+\mu\gamma)^2\} \\
&\ge& 6(\lambda^2+\mu^2)\{1-(\beta^2+\gamma^2)\} \ge 0,
\end{eqnarray*}
since $(\lambda\beta+\mu\gamma)^2\le (\lambda^2+\mu^2)(\beta^2+\gamma^2)$,
by the Cauchy-Schwartz inequality. Consequently,
$$
0<\beta^2+\gamma^2<1\Longrightarrow
c_-^n(X,v;p)=0\quad\text{and}\quad c_+^n(X,v;p)=2=\mu_-(X).
$$

 We now turn to the behavior of $c_\pm$ under direct sums.
 Supposing again only that $X^2=I$ (and allowing for general $n$) let
$$
Y=\textup{diag}\{X,\ldots,X\}\quad\textrm{and}\quad
w=\textup{col}(v,\ldots,v) \quad k\ \textup{times},
$$
then $Y^2=I_{kn}$ and hence
$$
p^\prime(Y)[H]w=0\Longleftrightarrow HYw=-2YHw,
$$
$$
p^\prime(Y)[H]w=0\Longrightarrow \langle p^{\prime\prime}(Y)[H]w,w\rangle=
-6\langle YHw,Hw\rangle,
$$
and
$$
\cT=\{H\in\RR^{kn\times kn}_{sym}: \  HYw+2YHw=0\},
$$
 just as before.

  Let
\begin{equation}
\label{eq:dec22a8}
\cD_k=\{H\in\RR^{kn\times kn}_{sym}: \  H=\textup{diag}\{H^1,\ldots,H^k\}\
\text{with}\ H^j\in\RR^{n\times n}_{sym}\}
\end{equation}
and
\begin{equation}
\label{eq:dec22b8}
{\cT}_{\cD_k}=\cT\cap\cD_k.
\end{equation}
Then clearly $\cT\supseteq {\cT}_{\cD_k}$ and
$\{Hv: \ H\in\cT\}\supseteq \{Hv: \ H\in {\cT}_{\cD_k}\}$. Therefore,
\begin{equation}
\label{eq:dec22c8}
c_\pm^{kn}(Y,w;p)\ge kc_\pm^{n}(X,v;p),
\end{equation}
 an inequality which holds generally, see Lemma \ref{lem:Hinequality}.
\end{example}

 A finer analysis of a specialization of the previous
 example shows that the inequality in equation (\ref{eq:dec22c8})
 (and Lemma \ref{lem:Hinequality}) can
 be strict.

\def\pp{r}
\begin{example}\rm
 \label{ex:elaborate}
Let $p(x)=x^3$ and
$X=\textup{diag}\{I_\pp,-I_q\}$ with $\pp\ge 1$, $q\ge 1$ and
$\pp+q=n$ (so that $X^2=I_n$ as in the previous example)
and correspondingly partition
$H\in\RR^{n\times n}_{sym}$ and $v\in\RR^n$ as
$$
H=\begin{bmatrix}H_{11}&H_{12}\\ H_{21}&H_{22}\end{bmatrix}, \quad
v=\begin{bmatrix}v_1\\v_2\end{bmatrix}
$$
with $H_{11}\in\RR^{\pp\times \pp}_{sym}$, $H_{22}\in\RR^{q\times q}_{sym}$,
$v_1\in\RR^\pp$ and $v_2\in\RR^q$. Then
$$
\cT=\{H\in \RR^{n\times n}_{sym}:\,3H_{11}v_1=-H_{12}v_2\ \text{and}\
H_{21}v_1=-3H_{22}v_2\}.
$$
Thus, if $H\in\cT$, then
$$
Hv=\begin{bmatrix}H_{11}v_1+H_{12}v_2\\ H_{21}v_1+H_{22}v_2\end{bmatrix}=
-2\begin{bmatrix}H_{11}v_1\\ H_{22}v_2\end{bmatrix}=
\frac{2}{3}\begin{bmatrix}H_{12}v_2\\ H_{21}v_1\end{bmatrix}
$$
and
$$
3v_1^T H_{11}v_1=-v_1^T H_{12}v_2=-v_2^T H_{21}v_1=3v_2^T H_{22}v_2.
$$
%
Therefore, since $H_{12}$ is an arbitrary $\pp\times q$ matrix and
$H_{21}=H_{12}^T$ ,
$$
\{Hv: \  H\in\cT\}=\left\{\begin{array}{l}
\left\{\begin{bmatrix}Av_2\\A^Tv_1\end{bmatrix}: \ A\in\RR^{\pp\times q}\right\}
\quad\textrm{if}\quad v_1\ne 0\ \text{and}\ v_2\ne 0\\ \\
0\quad\textrm{if}\quad v_1=0\ \text{or}\ v_2=0.\end{array}\right.
$$
If, say, $\pp=2$, $q=3$, $v_1^T=\begin{bmatrix} a & b\end{bmatrix}\ne 0$
and $v_2^T=\begin{bmatrix} c & d & e \end{bmatrix}\ne 0$, then
$$
\{Hv: \  H\in\cT\}=\textup{span}\left\{
\begin{bmatrix}c\\0\\a\\0\\0\end{bmatrix},\,
\begin{bmatrix}d\\0\\0\\a\\0\end{bmatrix},\,
\begin{bmatrix}e\\0\\0\\0\\a\end{bmatrix},\,
\begin{bmatrix}0\\c\\b\\0\\0\end{bmatrix},\,
\begin{bmatrix}0\\d\\0\\b\\0\end{bmatrix},\,
\begin{bmatrix}0\\e\\0\\0\\b\end{bmatrix}\right\}.
$$
If  $b=c=d=0$, then
$$
\{Hv: \  H\in\cT\}=\textup{span}\left\{
\begin{bmatrix}0\\0\\a\\0\\0\end{bmatrix},\,
\begin{bmatrix}0\\0\\0\\a\\0\end{bmatrix},\,
\begin{bmatrix}e\\0\\0\\0\\a\end{bmatrix},\,
\begin{bmatrix}0\\e\\0\\0\\0\end{bmatrix}\right\}.
$$
If also $a>e>0$, then this span
splits into the orthogonal sum of
a negative space of dimension one
and a positive space of dimension three with
respect to the bi-linear form
$\langle p^{\prime\prime}(X)[H]v,v\rangle$.
Correspondingly (in view of (\ref{eq:dec23a8})),
$$
c_-^5(X,v;p)=1 \quad \text{and} \quad  c_+^5(X,v;p)=\mu_-(X).
$$

Turn now to direct sums.
If
$$
Y=\textup{diag}\{X,\ldots,X\}\quad\textrm{and}\quad
w=\textup{col}(v,\ldots,v) \quad k\ \textup{times},
$$
then
$$
c_\pm^{kn}(Y,w;p)=c_\pm^{kn}(\widetilde{Y},\widetilde{w};p),
$$
where
$$
\widetilde{Y}=\textup{diag}\{ I_{kr}, -I_{kq}\},\quad \widetilde{w}
=\textup{col}
(w_1,w_2),\quad w_j=\textup{col}(v_j,\ldots,v_j)\quad\text{for}\ j=1,2.
$$
Thus,
$$
\{H\widetilde{w}: \ H\in\cT\}
=\left\{\begin{array}{l}
\left\{\begin{bmatrix}Aw_2\\A^Tw_1\end{bmatrix}:\
A\in\RR^{k \pp\times kq}\right\}
\quad\textrm{if}\quad w_1\ne 0\ \text{and}\ w_2\ne 0\\ \\
\ \ \ \ \ \ \ \ 0 \qquad    \quad  \qquad \qquad  \qquad  \textrm{if}\quad w_1=0\ \text{or}\ w_2=0.
\end{array}
\right.
$$
If, $\pp=2$, $q=3$, $k=2$, $v_1^T=\begin{bmatrix} a & b\end{bmatrix}\ne 0$
and $v_2^T=\begin{bmatrix} c & d & e \end{bmatrix}\ne 0$, then
$$
c_-^{10}(Y,w;p)=3\quad\text{and}\quad c_+^{10}(Y,w;p)=\mu_-(Y)=2\mu_-(X).
$$

 In particular, with $k=5$ and $n=2$,
\begin{equation*}
  c_-^{kn}(Y,w;p) > kc_-^{n}(X,v;p).
\end{equation*}
\end{example}


\section{Curvature: The Hessian on a Tangent Plane vs. the Relaxed Hessian}
 \label{sec:curvature}

 Our main tool for analyzing the curvature of non-commutative real varieties
 is a variant of the Hessian for symmetric nc polynomials $p$ of
 degree $d$ in $g$ non-commuting variables.
The
curvature of $\cV(p)$ is defined in terms of $\textup{Hess}\,(p)$
compressed to
tangent planes, for each dimension $n$.
This compression of the Hessian is awkward to work with directly, and
so we associate to it a quadratic polynomial $q(x)[h]$ (defined
for all $H\in \Smatng$, not just $H\in \tp{p}{X}{v}$)
called the relaxed Hessian.

  Let $V_k(x)[h]$ denote the vector of polynomials with entries
$h_jw(x)$, where $w(x)$ runs through the set of $g^k$ words of length
$k$,
$j=1,\ldots,g$. Although the order of the entries is fixed in some of our
earlier applications (see e.g., formula (2.3) in \cite{DHMjda}) it is
irrelevant for
the
moment.
Thus, $V_k=V_k(x)[h]$ is a vector of height $g^{k+1}$, and the vectors
\begin{equation}
\label{eq:may14a7}
V(x)[h]=\textup{col}(V_0,\ldots,V_{d-2})\quad \textrm{and}\quad
\widetilde{V}(x)[h]=\textup{col}(V_0,\ldots,V_{d-1})
\end{equation}
are vectors of height $g\alpha_{d-2}$ and $g\alpha_{d-1}$, respectively, where 
\begin{equation}
 \label{eq:sep18a7}
  \alpha_t=1+g+\cdots +g^t. 
\end{equation}
Note that
$$
  \widetilde{V}(x)[h]^T \widetilde{V}(x)[h] = \sum_{j=1}^g
  \sum_{|w|\le d-1} \; w(x)^T h_j^2 w(x),
$$
 where $|w|$ denotes the degree (length) of the word $w$.

 The {\bf relaxed Hessian} \index{relaxed Hessian}
 of the symmetric nc polynomial
 $p$ of degree $d$ is defined to be the polynomial
\begin{equation}
 \label{eq:relaxedHessian}
   p^{\prime\prime}_{\lambda, \delta}:=
      p^{\prime\prime}(x)[h]+ \delta \, \widetilde{V}(x)[h]^T
\widetilde{V}(x)[h]
         +\lambda \,  p^{\prime}(x)[h]^T p^\prime(x)[h].
\end{equation}

  Suppose $X\in \gtupn$ and $v\in \mathbb R^n$.
  We say that the {\bf  relaxed Hessian is positive}
  \index{relaxed Hessian, positive} at $(X,v)$ if
  for each $\delta>0$ there is a $\lambda_\delta>0$ so that
  for all $\lambda>\lambda_\delta$
$$
   0\le
    \langle p^{\prime\prime}_{\lambda, \delta}(X)[H]v,v\rangle
$$
 for all $H\in\gtupn$.
 Correspondingly we say that the {\bf  relaxed Hessian is negative}
 \index{relaxed Hessian, negative} at
$(X,v)$ if
 for each $\delta<0$ there is a $\lambda_\delta<0$ so that
 for all $\lambda \le \lambda_\delta$,
$$
   0\le
    -\langle p^{\prime\prime}_{\lambda, \delta}(X)[H]v,v\rangle
$$
 for all $H\in\gtupn$.
 Given a subset $\cS=\cup_{n=1}^\infty \cS_n$, with
$\cS_n\subseteq(\gtupn \times \mathbb R^n)$,
 we say that the
{\bf relaxed Hessian is positive (resp., negative) on $\cS$}
 \index{relaxed Hessian, positive on $S$} if
 it is positive (resp., negative) at each $(X,v)\in \cS$.

\begin{example} \rm
 \label{ex:classical}
Consider the classical $n=1$ case.
Suppose that $p$ is strictly smoothly quasi-concave, meaning that
all superlevel sets of $p$ are strictly convex with
strictly positively curved smooth boundary $\nu$.
Suppose that the gradient $\nabla p$ (written as a row vector) never vanishes on 
$\RR^g$. Then 
$G = \nabla p (\nabla p)^T$ is strictly positive, and at each point $X$ in
$\RR^g$ the relaxed Hessian  can be decomposed as a block matrix
subordinate to the tangent plane
to the level set at $X$
and to its orthogonal complement (the gradient direction).
In this decomposition the
relaxed Hessian with $\delta=0$ has the form
$$ R= \left[%
\begin{array}{cc}
  A & B \\
  B^T & D+ \lambda G \\
\end{array}%
\right],
$$
where, by convention
the second fundamental form is $A$
or $-A$, depending on the rather arbitrary choice of inward
or outward normal to $\nu$. If we select
our  normal direction to be $\nabla p$,
then $-A$ is the classical second fundamental form
as is consistent with the choice of sign in our definition in Subsection 
\ref{subsec:curvature}. (All this concern with the sign
is irrelevant to the content of this paper
and can be ignored by the reader.)

Next, in view of the presumed strict positive
curvature  of $\nu$, the matrix $A$
at each point of $\nu$ is  negative definite.
Thus, by standard Schur complement arguments,
$R$ will be negative
definite on any compact region of $\RR^g$ if
$$
D+\lambda G-B^TA^{-1}B\prec 0
$$
on this region.
Thus, strict convexity assumptions on the sublevel sets of
$p$ make the relaxed Hessian negative
definite even if $\delta=0$.

In the non-commutative case,
Remark \ref{rem:delta} (below) implies that 
if $n$ is large enough, then the second fundamental form
will have a nonzero null space.
Consequently, in this paper we shall
be forced to consider the case where $A$ is negative
semi-definite and
has a nonzero null vector $\eta$.
Then, to obtain negative definite $R$, we must add
another negative term, say $\delta I$, with arbitrarily small
$\delta < 0$.
After this, the argument based on
choosing $-\lambda$ large
succeeds as before. This $\delta$ term plus the $\lambda$
term produces the relaxed Hessian,
and proper selection of these terms make it negative definite.


Some additional detail on the connection between convexity of
a sublevel set and non-negativity of the relaxed Hessian
(positive curvature) is
provided in Section \ref{sec:convex-sets}.


\end{example}

\def\tcld{\widetilde{c}^{\lambda,\delta}}
\def\tcldn{\widetilde{c}^{\lambda,\delta, n}}

 The following theorem provides a link
 between the signature of the clamped
 second fundamental form with that of the
 Hessian.

\begin{theorem}
 \label{thm:signature-clamped-relaxed}
    Suppose $p$ is a symmetric nc polynomial of degree $d$ in $g$
    symmetric variables and $(X,v)\in
\Smatng \times\mathbb R^n$.
\begin{enumerate}
\item[\rm(1)] There exists $\delta_0>0$
    such that for each $\delta\in (0,\delta_0]$  there exists a
$\lambda_\delta >0$ so that for every $\lambda\ge \lambda_\delta$,
 \begin{equation*}
    e_-^n(X,v;p^{\prime\prime}_{\lambda,\delta}, \Smatng) = c_-^n(X,v;p)
 \end{equation*}
\item[\rm(2)]  There exists a $\delta_0<0$
    such that for  each $\delta\in [\delta_0, 0)$ there exists a
$\lambda_\delta<0$ so that for every $\lambda \le \lambda_\delta$,
 \begin{equation*}
    e_+^n(X,v;p^{\prime\prime}_{\lambda,\delta}, \Smatng) = c_+^n(X,v;p).
 \end{equation*}
\item[\rm(3)]
  If $\cV(p)$ has  positive  curvature at
   $(X,v)\in \mathcal V_n(p)$,
   i.e., if
$$
\langle p^{\prime\prime}(X)[H]v,v\rangle\le 0\quad\text{for every}\ H\in
\cT_p(X,v),
$$
  then $c_+^n(X,v;p)=0$ and for every $\delta<0$
   there exists a
$\lambda_\delta<0$ such that for all $\lambda \le \lambda_\delta$,
$$
\langle p_{\lambda,\delta}^{\prime\prime}(X)[H]v,v\rangle\le 0
\quad\text{for every}\ H\in \Smatng;
$$
i.e.,  the relaxed Hessian of $p$ is negative at $(X,v)$.
\end{enumerate}
\end{theorem}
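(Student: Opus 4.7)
The plan is to exploit the orthogonal decomposition $\Smatng = \cT \oplus \cT^\perp$, where $\cT = \cT_p(X,v)$, together with the distinct roles played by the three summands in $p^{\prime\prime}_{\lambda,\delta}$. Write $Q_{\lambda,\delta}(H) = \langle p^{\prime\prime}_{\lambda,\delta}(X)[H]v,v\rangle$. For $H = H_t + H_n$ with $H_t\in\cT$ and $H_n\in\cT^\perp$, we have $p^\prime(X)[H]v = p^\prime(X)[H_n]v$, and since $\cT^\perp$ is by definition the orthogonal complement of the kernel of $H\mapsto p^\prime(X)[H]v$, this map is injective on $\cT^\perp$. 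Consequently there is a constant $c>0$ with $\|p^\prime(X)[H_n]v\|^2 \ge c\|H_n\|^2$ on $\cT^\perp$, so the penalty term $\lambda \langle p^\prime(X)[H]v,p^\prime(X)[H]v\rangle$ vanishes on $\cT$ and, upon inflating $|\lambda|$, either dominates positively ($\lambda>0$) or negatively ($\lambda<0$) on $\cT^\perp$. The middle term $\delta \|\tilde V(X)[H]v\|^2$ is a perturbation whose size is controlled by $|\delta|$.

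For part (1), represent $Q_{\lambda,\delta}$ in block form $\left[\begin{smallmatrix} A_\delta & B \\ B^T & D_\delta + \lambda M\end{smallmatrix}\right]$ relative to $\cT \oplus \cT^\perp$, where $M\succ 0$ on $\cT^\perp$ by the injectivity noted above. Choose $\lambda_\delta>0$ large enough that $D_\delta + \lambda M\succ 0$ for all $\lambda\ge \lambda_\delta$. Then Sylvester's law applied to this block decomposition gives
$$e_-^n(X,v;p^{\prime\prime}_{\lambda,\delta}, \Smatng) = \#\{\text{negative eigenvalues of } S_{\lambda,\delta}\}, \qquad S_{\lambda,\delta} := A_\delta - B(D_\delta + \lambda M)^{-1}B^T.$$
As $\lambda \uparrow \infty$ the Schur complement $S_{\lambda,\delta}$ converges to $A_\delta$, whose associated quadratic form on $\cT$ is $\langle p^{\prime\prime}(X)[H_t]v,v\rangle + \delta \|\tilde V(X)[H_t]v\|^2$. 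Since a strictly negative subspace of $\langle p^{\prime\prime}(X)[\cdot]v,v\rangle$ is spanned by directions on which the form is uniformly bounded away from zero on the unit sphere, adding the positive semidefinite term $\delta\|\tilde V(X)[\cdot]v\|^2$ with $\delta \in (0,\delta_0]$ sufficiently small leaves this subspace strictly negative; conversely, adding a positive semidefinite form cannot increase the negative index. Hence the negative index of $A_\delta$ on $\cT$ equals $c_-^n(X,v;p)$, and choosing $\lambda_\delta$ yet larger if necessary (so $\|S_{\lambda,\delta} - A_\delta\|$ is small enough) we obtain $e_-^n(X,v;p^{\prime\prime}_{\lambda,\delta}, \Smatng) = c_-^n(X,v;p)$.

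Part (2) is the mirror-image argument with $\delta<0$ and $\lambda<0$: now $\lambda M\prec 0$ on $\cT^\perp$ for $\lambda$ sufficiently negative, and the same Schur-complement and perturbation analysis yields equality of positive indices. Part (3) follows at once from (2): positive curvature at $(X,v)$ is the assertion $c_+^n(X,v;p)=0$, whence (2) gives $\delta_0<0$ and $\lambda_\delta<0$ such that $\langle p^{\prime\prime}_{\lambda,\delta}(X)[H]v,v\rangle \le 0$ for all $H\in\Smatng$, all $\delta\in[\delta_0,0)$, and all $\lambda\le\lambda_\delta$. To upgrade to \emph{every} $\delta<0$, observe that for $\delta<\delta_0$ one may write
$$p^{\prime\prime}_{\lambda,\delta} = p^{\prime\prime}_{\lambda,\delta_0} + (\delta - \delta_0)\,\tilde V(x)[h]^T \tilde V(x)[h],$$
and the added term is negative semidefinite at $(X,v)$, so the conclusion persists (e.g.\ with $\lambda_\delta = \lambda_{\delta_0}$).

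The main obstacle is the perturbation bookkeeping: one must verify that the same $\delta_0$ and $\lambda_\delta$ suffice, uniformly, both for the Schur-complement convergence $S_{\lambda,\delta}\to A_\delta$ and for the preservation of the negative (resp.\ positive) subspace under the $\delta$-perturbation. The resolution is routine but essential: the strictly sign-definite subspace of $\langle p^{\prime\prime}(X)[\cdot]v,v\rangle$ on $\cT$ is finite-dimensional, so compactness of its unit sphere yields a uniform spectral gap, and the Schur-complement remainder is $O(1/\lambda)$, both of which can be made arbitrarily small simultaneously.
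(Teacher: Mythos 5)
Your overall strategy is the same as the paper's --- isolate the clamped tangent space, add the $\delta$ term to pin the negative index there, and let $|\lambda|\to\infty$ to make the complementary block definite --- but the final limiting step has a genuine gap. You deduce from the norm convergence $S_{\lambda,\delta}\to A_\delta$ that the negative indices eventually agree. The negative index is only lower semicontinuous at a singular operator, and $A_\delta$ \emph{is} singular in general: its kernel contains $\cN=\{H\in\Smatng:\widetilde{V}(X)[H]v=0\}\subseteq\cT$, which is nonzero once $n$ exceeds the number of words of length at most $d-1$. Since the Schur correction $-B(D_\delta+\lambda M)^{-1}B^T$ is negative semidefinite you get $\mu_-(S_{\lambda,\delta})\ge\mu_-(A_\delta)$ for free, but for the reverse inequality you must rule out the correction turning kernel vectors of $A_\delta$ into new negative directions, and ``$\|S_{\lambda,\delta}-A_\delta\|$ small'' does not do that: a kernel vector $H_t$ of $A_\delta$ with $BH_t\ne 0$ satisfies $\langle S_{\lambda,\delta}H_t,H_t\rangle<0$ for \emph{every} finite $\lambda$. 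Your closing compactness remark only protects the strictly negative subspace, not the kernel, so as written the upper bound $e_-^n\le c_-^n$ is not established.

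The gap is closable, and the missing ingredient is exactly the subspace the paper introduces. The operators representing all three summands of $p^{\prime\prime}_{\lambda,\delta}$ annihilate $\cN$, so the off-diagonal block $B$ vanishes there; and, writing $\cT=\cN\oplus\cM$ with $\cM=\cT\cap\cN^{\perp}$, the $\delta$-term is strictly positive definite on $\cM$, so for small $\delta>0$ the compression of $A_\delta$ to $\cM$ is \emph{invertible} with negative index $c_-^n(X,v;p)$ (adding a positive definite operator pushes the nonnegative eigenvalues strictly positive while small $\delta$ keeps the negative ones negative). Hence $\ker A_\delta=\cN\subseteq\ker B$, the Schur complement splits as $0\oplus\widetilde{S}_{\lambda,\delta}$ on $\cN\oplus\cM$ with $\widetilde{S}_{\lambda,\delta}$ converging to an invertible limit, and only then is your continuity argument valid. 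The paper sidesteps Schur complements entirely: it exhibits the decomposition $\Smatng=\cN\oplus\cM_-\oplus(\cM_+\oplus\cL)$ on which the relaxed Hessian form is zero, negative definite, and positive definite respectively, which yields both inequalities for $e_-^n$ at once. Parts (2) and (3) of your proposal, including the reduction of ``every $\delta<0$'' to $\delta\in[\delta_0,0)$ by adding a negative semidefinite term, are fine modulo the same repair.
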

Note: (1) and (2) do not require $(X,v)$ to be in $\cV_n(p)$.

The proof employs a variant of the Hessian which we now introduce.
  Let $p^{\prime\prime}(X)[H][K]$
  denote the matrix obtained by differentiating
  $p^\prime(X)[H]$ in the direction
  $K\in\Smatng$; i.e.,
$$
  p^{\prime\prime}(X)[H][K]= \lim_{t\to 0} \frac{1}{t}(p^\prime(X+tK)[H]
- p^\prime(X)[H]).
$$
  In particular,
$$
p^{\prime \prime}(X)[H]=p^{\prime\prime}(X)[H][H]\quad
\textrm{and}\quad
p^{\prime \prime}(X)[H][K]=p^{\prime\prime}(X)[K][H]\,.
$$

\smallskip

\begin{proof}
 Let $d$ denote the degree of $p$ and $g$ the number of non-commutative
 symmetric  variables. 
To verify (1), let $\mathcal H=\Smatng$
endowed with the  Hilbert Schmidt norm:
\begin{equation}
\label{eq:jan7a9}
\langle H, K\rangle_{\cH}=\textup{trace}\,K^TH=\sum_{j=1}^g\textup{trace}\,
K_jH_j\,.
\end{equation}
The mapping
$$
  \mathcal H\times \mathcal H \ni (H,K)
\mapsto \langle p^{\prime\prime}(X)[H][K]v,v \rangle_{\RR^n}
$$
   is bi-linear and, because $\mathcal H$ is finite dimensional, bounded.
   Thus, there is a bounded linear
   operator $A$ on $\mathcal H$ so that
\begin{equation}
\label{eq:dec30b7}
   \langle A H, K\rangle_{\mathcal H} =
      \langle p^{\prime\prime}(X)[H][K]v,v \rangle_{\mathbb R^n}.
\end{equation}
    The operator  $A$ is selfadjoint with respect to this inner product,
because
$$
  p^{\prime\prime}(X)[H][K] = p^{\prime\prime}(X)[K][H].
$$

    Similarly, there are bounded linear selfadjoint operators $Q$ and
$E$ on $\cH$ so that
\begin{equation*}
 \begin{split}
    \langle QH,K \rangle_{\cH} = &
\langle p^{\prime}(X)[H]v,p^{\prime}(X)[K]v \rangle_{\RR^n} \\
    \langle EH,K \rangle_{\cH} = &
\langle \widetilde{V}(X)[H]v, \widetilde{V}(X)[K]v\rangle_{\RR^n}\,.
 \end{split}
\end{equation*}
Thus,
$$
\langle p^{\prime\prime}_{\lambda, \delta}(X)[H]v,v\rangle_{\RR^n}=
\langle(A+\lambda Q+\delta E)H, H\rangle_{\cH}
$$
 for all $H\in\gtupn$.
Let
$$
  \mathcal N=\{H\in\mathcal H: \ \widetilde{V}(X)[H]v=0\}.
$$
 Note that $H\in\mathcal N$ if and only if $H_jw(X)v=0$
 for  $j=1,\ldots,g$ and all words $w$ of degree at most $d-1$.
 In particular,
 $A{\cN}=\{0\}$, $Q{\cN}=\{0\}$ and $E{\cN}=\{0\}$, and
 thus  it suffices
 to focus on $\cN^\perp$.

Let
\begin{eqnarray*}
 \mathcal M&=&\{H\in \mathcal N^\perp: \ \,
 p^\prime(X)[H]v=0\}
\end{eqnarray*}
and
\begin{eqnarray*}
  {\cL}&=&{\cM}^\perp\cap\cN^\perp\,.
\end{eqnarray*}
Then
$$
 {\cN}^\perp ={\cM}\oplus {\cL}\,.
$$
Now let  $\mathcal M_-$ denote the span of the eigenspaces corresponding
   to the negative eigenvalues of the compression
    of $A$ to $\mathcal M$ (i.e., of $P_{\cM}A\vert_{\cM}$), let
$\mathcal M_+$ denote the orthogonal complement of $\mathcal M_-$ in
$\mathcal M$ and observe that:
\begin{enumerate}
  \item [\rm(a)]  $\textup{dim}\  \mathcal M_-
  \  = c_-^n(X,v;p);$
      \item[\rm(b)] $Q$ is strictly positive definite on $\mathcal L$
       and is $0$ on $\mathcal M;$
  \item[\rm(c)] $E$ is strictly positive definite on
$\mathcal N^\perp=\mathcal M\oplus  \mathcal L;$
\item[\rm(d)] $P_{\cM_+}A\vert_{\cM_+}\succeq 0$ and
$P_{\cM_-}A\vert_{\cM_-}\prec 0$.
\end{enumerate}
%
   Hence, there is a $\delta_0 > 0$ so that
   if $0<\delta \le \delta_0$, then the compression
   of $A+\delta E$ to $\mathcal M_-$ is negative definite and
   the  compression of $A+\delta E$ to $\mathcal M_+$ is positive
   definite. Therefore, if  $\lambda >0$ is sufficiently large,
   the compression of $A+\delta E + \lambda Q$ to $\mathcal M_+\oplus \cL$
   is positive definite; whereas its compression to $\mathcal M_-$ is
   equal to the compression of $A+\delta E$ to $\mathcal M_-$, which is
   negative definite. Thus, as
$$\mathcal N^\perp =\mathcal M_- \oplus (\mathcal M_+\oplus \mathcal L),$$
   it now follows that $A+\delta E +\lambda Q$ has $c_-(X,v;p)$ negative
   eigenvalues (counting with multiplicity).

  The proof of (2) is similar to the proof of (1).
To prove (3), fix $\delta_0<0$ so that (2) holds and
choose $\delta$ so that
$\delta_0\le\delta <0$. Then
  there is a $\lambda_\delta$ satisfying the conclusion of (2);
  i.e., $e_+^n(X,v;p^{\prime\prime}_{\lambda,\delta}, \Smatng)=0$
  and thus (3) holds.  Moreover, if $\delta_*<\delta$, then
  it is still the case that
  $e_+^n(X,v;p^{\prime\prime}_{\lambda,\delta_*}, \Smatng)=0$
  and hence (3) holds for all $\delta<0$.
\end{proof}

\begin{remark}
\label{rem:dec30a07}
Let $(\cH_A)_-$ (resp., $(\cH_A)_0$, $(\cH_A)_+$) denote the span of the
eigenvectors of $A$ corresponding to negative (resp., zero, positive)
eigenvalues of the operator $A$ that was introduced in the proof of
Theorem \ref{thm:signature-clamped-relaxed}. Then
\begin{equation}
\label{eq:dec30a7}
\Smatng=(\cH_A)_-\oplus (\cH_A)_0 \oplus (\cH_A)_+
\end{equation}
In particular, $(\cH_A)_-$ is a maximal strictly negative subspace of
$\Smatng$ with respect to the indefinite inner product
(\ref{eq:dec30b7}), and $(\cH_A)_0 \oplus (\cH_A)_+$ is complementary to it.
\end{remark}

\subsection{Example illustrating
Theorem \ref{thm:signature-clamped-relaxed}}
 \label{subsec:curvature-example}
In this subsection we continue
with Example \ref{ex:elaborate}
to illustrate the ingredients in the
 proof of Theorem \ref{thm:signature-clamped-relaxed}.

\begin{example}
 \label{ex:proof-sig-relaxed}
 \rm
  Let $p(x)=x^3$, let $X=\textup{diag}\{I_2,-I_3\}$, and let
$v^T=\begin{bmatrix}a&0&0&0&e\end{bmatrix}$
with $a\ge e> 0.$

Let $u_j$ denote the $j^{th}$ standard basis vector for $\RR^5$
with
$j=1,\ldots,5$  and let $S_{ij}$ denote the
normalized symmetrized elementary matrices
$$
S_{ij}=\left\{\begin{array}{l}(u_iu_j^T+u_ju_i^T)/\sqrt{2}\quad
\textrm{for}\quad i\ne j\\ \\
u_iu_i^T \quad\textrm{for}\quad i=j.\end{array}\right.
$$
Then the set of 15 matrices
$$
\{S_{ij}: \  i,j=1,\ldots,5\quad \text{and}\quad i\le j\}
$$
is an orthonormal basis for $\RR^{5\times 5}_{sym}$
with respect to the trace
inner product (\ref{eq:jan7a9}).
In terms of the notation of Theorem \ref{thm:signature-clamped-relaxed},
$$
\cN=\{H\in\RR^{5\times 5}_{sym}: \ \widetilde{V}(X)[H]v=0\}
=\textup{span}\{S_{22}, S_{23}, S_{24}, S_{33}, S_{34}, S_{44}\},
$$
$$
\cN^\perp=\textup{span}\{S_{11},S_{21}, S_{31}, S_{41}, S_{51}, S_{25},
S_{35}, S_{45}, S_{55}\}
$$
and
$$
\cM=\{H\in\cN^\perp: \ p^\prime(X)[H]v=0\}
=\textup{span}\{\gh_1,\gh_2,\gh_3,
\gh_4\},
$$
where
\begin{eqnarray*}
\gh_1&=&-eS_{11}+3a\sqrt{2}S_{15}-\frac{a^2}{e}S_{55}\\
\gh_2&=& eS_{21}-3aS_{25}\\
\gh_3&=& 3eS_{31}-aS_{35}\\
\gh_4&=& 3eS_{41}-aS_{45}.
\end{eqnarray*}
Moreover,
$$
\cL:=\cM^\perp\cap\cN^\perp=\textup{span}
\{\gh_5, \gh_6,\gh_7, \gh_8, \gh_9\},
$$
where
\begin{eqnarray*}
\gh_5&=&3aS_{21}+eS_{25}\\
\gh_6&=&aS_{31}+3eS_{35}\\
\gh_7&=&aS_{41}+3eS_{45}\\
\gh_8&=&6aS_{11}+\sqrt{2}eS_{15}\\
\gh_9&=&-a^2S_{11}+\frac{3\sqrt{2}a^3}{e}S_{15}+(e^2+18a^2)S_{55}
\end{eqnarray*}
and
$$
\textrm{trace}\,\gh_j^T\gh_i=0\quad\text{for}\quad i,j=1,\ldots,9\quad
\text{if}\quad i\ne j.
$$
Thus, as
$$
\begin{array}{ll}
\gh_1v=2aeu_1+2a^2u_5&\gh_1Xv=-4aeu_1+4a^2u_5\\
\gh_2v=-\sqrt{2}aeu_2 & \gh_2Xv=2\sqrt{2}aeu_2\\
\gh_3v=\sqrt{2}aeu_3 & \gh_3Xv=2\sqrt{2}aeu_3\\
\gh_4v=\sqrt{2}aeu_4 & \gh_4Xv=2\sqrt{2}aeu_4,
\end{array}
$$
it is readily confirmed that if $H=\sum_{j=1}^4\alpha_j\gh_j$, then
$$
p^\prime(X)[H]v=(2H+XHX)v=0
$$
and
\begin{eqnarray*}
\langle p^{\prime\prime}(X)[H]v,v\rangle&=&-6\langle XHv,Hv\rangle\\
&=&12\{2\alpha_1^2a^2(a^2-e^2)-\alpha_2^2a^2e^2+\alpha_3^2a^2e^2+
\alpha_4^2a^2e^2\}.
\end{eqnarray*}
Consequently,
$$
\cM_-=\textup{span}\{\gh_2\}\quad\textrm{and}\quad
\cM_+=\cM\ominus\cM_-=\textup{span}\{\gh_1, \gh_3,
\gh_4\}.
$$
Note that if $a>e$, then $\cM_+$ contributes
three positive squares, whereas,
if $a=e$, then only two.

Next we look at the $\lambda$ and $\delta$ terms
used in the relaxed Hessian.
Since
$$
\begin{array}{ll}
\gh_5v=\frac{1}{\sqrt{2}}(3a^2+e^2)u_2&\gh_5Xv
=\frac{1}{\sqrt{2}}(3a^2-e^2)u_2\\
\gh_6v=\frac{1}{\sqrt{2}}(a^2+3e^2)u_3&\gh_6Xv
=\frac{1}{\sqrt{2}}(a^2-3e^2)u_3\\
\gh_7v=\frac{1}{\sqrt{2}}(a^2+3e^2)u_4&\gh_7Xv
=\frac{1}{\sqrt{2}}(a^2-3e^2)u_4\\
\gh_8v=(6a^2+e^2)u_1+aeu_5&\gh_8Xv
=(6a^2-e^2)u_1+aeu_5\\
\gh_9v=2a^3u_1+(\frac{3a^4}{e}+e^3+18a^2e)u_5&\gh_9Xv=
-4a^3u_1+(\frac{3a^4}{e}-(e^3+18a^2e))u_5,
\end{array}
$$
it is readily checked that if $H=\sum_{j=1}^9\alpha_j\gh_j$, then
\begin{equation*}
\begin{split}
p^\prime(X)[H]v&=\alpha_5\frac{1}{\sqrt{2}}(9a^2+e^2)u_2
+\alpha_6\frac{1}{\sqrt{2}}(a^2+9e^2)u_3
+\alpha_7\frac{1}{\sqrt{2}}(a^2+9e^2)u_4\\
&{}+\alpha_8[(18a^2+e^2)u_1+aeu_5]+\alpha_9(3\frac{a^4}{e}+3e^3+54a^2e)u_5
\end{split}
\end{equation*}
and hence that
\begin{equation*}
\begin{split}
\langle p^\prime(X)[H]v,p^\prime(X)[H]v\rangle
&=\frac{1}{2}\alpha_5^2(9a^2+e^2)^2
+\frac{1}{2}\alpha_6^2(a^2+9e^2)^2+\frac{1}{2}\alpha_7^2(a^2+9e^2)^2\\
&{}+\alpha_8^2(18a^2+e^2)^2+(\alpha_8ae
+\alpha_9(3\frac{a^4}{e}+3e^3+54a^2e))^2.
\end{split}
\end{equation*}

Now we calculate the $\delta$ term of the relaxed Hessian.
The preceding formulas for $\gh_j$, $j=1,\ldots,9$ and the fact
that $Xu_j=u_j$ for $j=1,2$ and $Xu_j=-u_j$ for $j=3,4,5$ imply that if
$H=\sum_{j=1}^9\alpha_j\gh_j$,  then
$$
Hv=\sum_{j=1}^9\alpha_j\gh_jv=\sum_{j=1}^5\beta_ju_j\quad\textrm{and}\quad
HXv=\sum_{j=1}^5\gamma_ju_j
$$
for appropriately chosen constants
$\beta_1,\ldots,\beta_5$ and
$\gamma_1,\ldots,\gamma_5$; i.e.,
$$\beta_2=-\sqrt{2}ae\alpha_2+\frac{1}{\sqrt{2}}(3a^2+e^2)\alpha_5,
\qquad \qquad
\beta_3=\sqrt{2}ae\alpha_3+\frac{1}{\sqrt{2}}(a^2+3e^2)\alpha_6, $$
 etc.
Thus,
\begin{eqnarray*}
\langle \widetilde{V}(X)[H]v, \widetilde{V}(X)[H]v\rangle&=&
2v^THHv+v^TXHHXv\\
&=&\sum_{j=1}^5(2\beta_j^2+\gamma_j^2).
\end{eqnarray*}
\end{example}

\section{Direct sums}
 \label{sec:direct}
   The next lemma expresses a basic principle
   \cite{CHSY03} \cite{HMVjfa} that provides a link between the direct
   sum and the minimum degree (irreducibility)
    hypotheses in Theorem \ref{thm:sigmain}.
   Also in the section is the observation that the full rank
   condition, as defined in Section \ref{subsec:smooth}, is preserved
   under direct sums.

\begin{lemma}
  \label{lem:directsumsystem}
Suppose that the 
    set $\cS=\cup_{n\ge 1}{\cS}_n$ respects  direct sums,
      let $N$ be a given positive integer and let ${\cW}_N$ denote
    the set of all words of length at most $N$.   Then, either,
 \begin{itemize}
    \item[(E)]
     \label{item:E}
        there exists a positive integer $n$
       and  a pair $(X, v)\in\cS_n$ such that the set
      \begin{equation*}
       \{ w(X)v: \  w  \in \mathcal W_N\}
      \end{equation*}
            is a linearly independent set
      of vectors in   $\mathbb R^{n}$; or
   \item[(O)]
    \label{item:O}
       there exist real numbers $q_w$
        for each
        $w\in \mathcal W_N$ not all of which are zero
      such that for each $(X,v)\in\cS$,
      \begin{equation*}
        0= \left(\sum_{|w|\le N} q_w w(X)\right)v.
      \end{equation*}
  \end{itemize}
\end{lemma}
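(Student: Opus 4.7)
The plan is to recast the dichotomy in terms of kernels of a family of linear maps and then exploit that (i) the ambient space is finite dimensional, and (ii) $\cS$ is closed under direct sums. Specifically, let $M=\#\cW_N$ and, for each $(X,v)\in\cS_n$, introduce the linear evaluation map
\[
   L_{X,v}\colon \RR^M\to\RR^n,\qquad L_{X,v}(q)=\sum_{|w|\le N} q_w\,w(X)v.
\]
Then option (E) says precisely that $\ker L_{X,v}=\{0\}$ for some $(X,v)\in\cS$, while option (O) says precisely that
\[
   \cK:=\bigcap_{(X,v)\in\cS}\ker L_{X,v}\ \neq\ \{0\}.
\]
So the whole lemma reduces to showing: if (E) fails, then (O) holds.

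The first main step is to observe that, for any finite subset $F=\{(X^1,v^1),\ldots,(X^t,v^t)\}$ of $\cS$, the direct sum construction of Subsection \ref{subsec:sig-variety} gives $(X_F,v_F)\in\cS$, and the block structure \eqref{eq:wXv} yields
\[
   L_{X_F,v_F}(q)=\mathrm{col}\bigl(L_{X^1,v^1}(q),\ldots,L_{X^t,v^t}(q)\bigr),
\]
so that $\ker L_{X_F,v_F}=\bigcap_{j=1}^t \ker L_{X^j,v^j}$. Thus finite intersections of kernels are themselves realized as the kernel of an evaluation map at a single point of $\cS$.

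The second step is a standard finite-dimensional stabilization argument: the family of subspaces $\{\,\bigcap_{j=1}^t\ker L_{X^j,v^j}\,\}$ indexed by finite $F\subset\cS$ is closed under finite intersection and lies inside the $M$-dimensional space $\RR^M$, so there is a finite $F_0\subset\cS$ realizing the minimum dimension. This $F_0$ then satisfies
\[
   \ker L_{X_{F_0},v_{F_0}}=\bigcap_{(X,v)\in F_0}\ker L_{X,v}=\cK.
\]
Now dichotomize on $\cK$: if $\cK=\{0\}$, then the single pair $(X_{F_0},v_{F_0})\in\cS$ witnesses (E) (linear independence of $\{w(X_{F_0})v_{F_0}\}_{w\in\cW_N}$ is the same as triviality of $\ker L_{X_{F_0},v_{F_0}}$); if $\cK\neq\{0\}$, any nonzero $q\in\cK$ supplies coefficients $\{q_w\}$ witnessing (O).

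No step is really a serious obstacle; the only point requiring care is to justify that a finite intersection is enough to capture $\cK$, but this is immediate from the finite dimensionality of $\RR^M$ (any strictly descending chain of subspaces has length at most $M$). The role of the direct-sum hypothesis is precisely to convert this finite intersection of kernels back into the kernel of a \emph{single} evaluation map at a point of $\cS$, which is what lets the dichotomy be stated in the sharp form (E) vs.\ (O).
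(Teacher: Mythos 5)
Your proof is correct. Both your argument and the paper's share the same skeleton: the direct-sum hypothesis is used to convert any finite subset $F\subset\cS$ into a single point $(X_F,v_F)\in\cS$, so that common annihilators of $F$ become annihilators of one evaluation map, and the whole issue is then a finiteness/limiting step over the (possibly infinite) family $\cS$. Where you diverge is in how that step is carried out. The paper negates (E), normalizes the annihilating coefficient vectors to lie on the unit sphere of $\RR^L$, observes that the resulting sets $\cC_F$ are compact, nonempty, and decreasing under enlargement of $F$, and invokes the finite intersection property to extract a single nonzero $c$ annihilating all of $\cS$. You instead work directly with the kernels $\ker L_{X,v}\subseteq\RR^M$ and use the descending chain condition on subspaces of a finite-dimensional space to find a finite $F_0$ whose kernel intersection already equals the global intersection $\cK$; the dichotomy then falls out of whether $\cK$ is zero or not. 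Your route is purely linear-algebraic (no topology), yields the two alternatives symmetrically rather than by contraposition, and has the small bonus of exhibiting an explicit finite direct sum $(X_{F_0},v_{F_0})\in\cS$ witnessing alternative (E) when it holds. The paper's compactness argument is essentially the unit-sphere shadow of the same kernel computation, so the two proofs are close cousins, but yours is the more elementary of the two.
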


 It is useful to note that the alternative
 Lemma \ref{item:O}(O)   {\bf is equivalent} to
 saying that there exists a (not necessarily symmetric) nc polynomial $q$ of
 degree at most $N$ such that $q(X)v=0$ for every choice of
 $(X, v)\in\cS$.

\begin{proof}
     If condition Lemma \ref{item:E}(E) does not hold, then
     for each positive integer $t$ and each finite set
$F=\{(X^1, v^1),\ldots,(X^t, v^t)\}$ with $(X^j, v^j)\in\mathcal{S}_{n_j}$
for $j=1,\ldots, t$, there is a nonzero function $c_F:\mathcal W_N \to
\mathbb R$
     such  that
   \begin{equation*}
      0= \sum c_F(w) w(X_F)v_F\,.
   \end{equation*}
     Without loss of generality it may be assumed that
     $\sum c_F(w)^2=1$ so that  $c_F$ can be identified
     with an element of $\mathbb B^{L}$, the unit ball in $\mathbb R^L$,
  where $L=\sum_0^N g^j$. For each choice of $F$,
     let $\mathcal C_F\subset \mathbb B^L$
   denote the collection of all such normalized
  coefficients $c_F$ (corresponding to the possibly many
  nc polynomials that annihilate $(X, v)$) and
     observe that each $\mathcal C_F$
     is compact, and of course nonempty by hypothesis.
     Further, if $F\subset G$, i.e., if
$G=\{(X^1, v^1),\ldots,(X^t, v^t), (Y, u)\}$ with
$(Y, u)\in\cS_r$ for some $r$,
then $\mathcal C_F \supset \mathcal C_G$,
     from which it follows that the collection
    \begin{equation*}
       \{\mathcal C_F: \  F \mbox{ is a finite subset of } \cS\}
    \end{equation*}
     satisfies the finite intersection property; i.e., every finite
     intersection is nonempty.
     It follows that the whole intersection is nonempty and
     thus there is a $c\in \mathbb B^L$ so that
    \begin{equation}
     \label{eq:indimn}
       0= \sum c(w)w(X)v \ \ \textrm{for all} \  (X,v)\in\cS.
    \end{equation}
   \end{proof}

\bigskip

 \subsection{Direct Sums of Full Rank  Points}
To show that our main theorem applies to yield
Corollary \ref{cor:sigmain}
we need our full rank assumptions to
mesh with the hypotheses of
Theorem \ref{thm:sigmain}.
The issue is to show that full rank points respect direct sums.

   \begin{lemma}
   \label{lem:DirSmooth}
     Let $p$ be a symmetric nc polynomial in 
     symmetric variables and
     let $F=\{(X^1,v^1),\dots,(X^t,v^t)\}$
     where  $(X^j,v^j) \in\SMATNG{n_j} \times \mathbb R^{n_j}$.
     If each $(X^j,v^j)$ is a full rank point for $p$, then
     so is $(X_F,v_F).$
   \end{lemma}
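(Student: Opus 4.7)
The plan is to exploit the compatibility between the block-diagonal structure of $X_F$ and the algebraic definition of $p'(X)[H]$. Concretely, I will show that it suffices to produce a preimage one block at a time, by passing block-diagonal tangent tuples through the directional derivative.

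First I would fix an arbitrary target vector $w_F = \mathrm{col}(w^1,\dots,w^t) \in \mathbb R^{n}$, with $n = n_1 + \cdots + n_t$, and seek $H \in \Smatng$ with $p'(X_F)[H]v_F = w_F$. Since each $(X^j,v^j)$ is a full rank point, for each $j$ I can pick a tuple $H^j = (H^j_1,\dots,H^j_g) \in \SMATNG{n_j}$ with $p'(X^j)[H^j]v^j = w^j$. I would then assemble the block-diagonal tuple $H = (H_1,\dots,H_g) \in \Smatng$ by setting $H_i = \mathrm{diag}(H^1_i,\dots,H^t_i)$; each $H_i$ is symmetric because each $H^j_i$ is.

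The key computation is that for any word $w(x) = x_{i_1}\cdots x_{i_k}$, substituting $X_F$ (also block-diagonal with the same block pattern) into $w$ produces $w(X_F) = \mathrm{diag}(w(X^1),\dots,w(X^t))$, and similarly any monomial in $X_F$ and $H$ (with $H$ having the compatible block-diagonal pattern) is again block-diagonal with blocks equal to the corresponding monomial in $X^j$ and $H^j$. Since $p'(x)[h]$ is a finite sum of such monomials, it follows by linearity that
\[
p'(X_F)[H] \;=\; \mathrm{diag}\bigl(p'(X^1)[H^1],\dots,p'(X^t)[H^t]\bigr).
\]
Applying this to $v_F = \mathrm{col}(v^1,\dots,v^t)$ yields
\[
p'(X_F)[H]v_F \;=\; \mathrm{col}\bigl(p'(X^1)[H^1]v^1,\dots,p'(X^t)[H^t]v^t\bigr) \;=\; w_F.
\]
Since $w_F$ was arbitrary, the map $\Smatng \ni H \mapsto p'(X_F)[H]v_F$ is surjective, so $(X_F,v_F)$ is a full rank point for $p$.

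There is no real obstacle here: the only thing to check is the block-diagonal compatibility under the product rule hidden in the definition of $p'(x)[h]$, which reduces to the trivial observation that products of block-diagonal matrices with a common block pattern are block-diagonal with the same pattern. The restriction to a block-diagonal $H$ means we are only using a proper subspace of $\Smatng$ to hit $w_F$, which is precisely why direct sums can only enlarge the range of the differential.
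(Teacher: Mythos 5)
Your proposal is correct and follows essentially the same argument as the paper's proof: pick blockwise preimages $H^j$ using the full rank of each $(X^j,v^j)$ and assemble the block-diagonal tuple $H$ so that $p^\prime(X_F)[H]v_F = w_F$. The extra verification you supply of the block-diagonal compatibility of $p^\prime$ under direct sums is left implicit in the paper but is exactly the right justification.
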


  \begin{proof}
     Let   $n=n_1+\cdots+n_t$.
     Given $w=\textup{col} (w_1,\ldots, w_t)
\in \mathbb R^n$ with $w_j\in\RR^{n_j}$, 
    there exists $H^j \in \SMATNG{n_j}$ so that
     $p^\prime (X^j)[H^j]v^j=w^j$. This holds because
     each $(X^j,v^j)$ is a full rank point.  Thus, if
$H_F=\textup{diag}\{ H^1,\ldots,H^t\}$, then
     $p^\prime(X_F)[H_F]v_F=w$.
  \end{proof}

    Given an algebraically open set $\cO$ and a symmetric
    nc polynomial $p$, let
   \begin{equation}
     \label{eq:defnBp0}
      \graded{p}{\cO}=\{(X,v): \  (X,v) \ \
      \mbox{is a full rank point}, \ \
          X\in \cO \cap \mathcal V(p)\}.
   \end{equation}

   \begin{lemma}
     \label{lem:exdirsum}
       The set $ \graded{p}{\cO}$
      respects  direct sums.
   \end{lemma}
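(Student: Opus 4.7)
The plan is to verify that the defining properties of $\graded{p}{\cO}$ — namely, belonging to $\cV(p)$, lying over points of the algebraically open set $\cO$, and being full rank — are each preserved when forming direct sums. So, given a finite set $F=\{(X^1,v^1),\dots,(X^t,v^t)\}$ with each $(X^j,v^j)\in\graded{p}{\cO}$, I need to show $(X_F,v_F)\in\graded{p}{\cO}$.

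First, $(X_F,v_F)\in\cV(p)$. This is already noted in the introduction (item (i) of the list following equation \eqref{eq:wXv}): by \eqref{eq:wXv}, $p(X_F)v_F = \textup{col}(p(X^1)v^1,\ldots,p(X^t)v^t)=0$.

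Second, $X_F\in\cO$. Writing $\cO=\posdom{\cQ}=\cap_{q\in\cQ}\{X: q(X)\succ 0\}$ for some finite set $\cQ$ of symmetric nc polynomials, we have $q(X_F)=\textup{diag}\{q(X^1),\dots,q(X^t)\}$ for each $q\in\cQ$. A block diagonal symmetric matrix is positive definite if and only if each of its diagonal blocks is, so $q(X_F)\succ 0$ for every $q\in\cQ$, and hence $X_F\in\cO$.

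Third, $(X_F,v_F)$ is a full rank point of $p$. This is precisely the content of Lemma \ref{lem:DirSmooth}.

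There is no real obstacle here; the lemma is a direct assembly of three facts, two of which (items (i) and (ii) of the list following equation \eqref{eq:wXv}) have already been recorded, and the third (preservation of an algebraically open set under direct sums) is an immediate consequence of the block-diagonal structure of $q(X_F)$ together with the standard characterization of positive definiteness for block diagonal matrices. The intersection of sets respecting direct sums respects direct sums (item (iii) in that same list), which can also be invoked to package the argument.
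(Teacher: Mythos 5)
Your proof is correct and follows essentially the same route as the paper's: the paper also reduces the claim to Lemma \ref{lem:DirSmooth} together with the facts that $\cV(p)$ and $\cO$ each respect direct sums, and you have simply filled in the (easy) verifications of those two facts via equation \eqref{eq:wXv} and the block-diagonal characterization of positive definiteness. No changes needed.
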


   \begin{proof} This is an immediate consequence of
     Lemma \ref{lem:DirSmooth} and the fact that both
     $\cO$ and $\mathcal V(p)$ respect direct sums.
  \end{proof}

\section{The Middle Matrix-Border Vector Representation}
\label{sec:middle-border}
  Our approach depends heavily upon the border vector-middle
  matrix representation for non-commutative
  quadratic functions which we now describe.

 A symmetric nc polynomial $f(x)[h]$ in the $2g$ variables
 $x=(x_1,\ldots,x_g)$ and $h=(h_1,\ldots,h_g)$ that is of
 degree $s$ in $x$ and homogeneous of
 degree two in $h$  admits
 a representation of the form
\begin{equation}
 \label{eq:sep2b7}
  f(x)[h]=\begin{bmatrix}V_0(x)[h]^T& \cdots & V_s(x)[h]^T\end{bmatrix}Z(x)
  \begin{bmatrix}V_0(x)[h]\\ \vdots \\ V_s(x)[h]\end{bmatrix},
\end{equation}
  where $Z(x)$ is a square matrix of nc polynomials and
  the $V_j(x)[h]$ are vectors of nc words of the
  form $h_l w(x)$ over choices of words $w$ of length
  $j$.

In the case that $f(x)[h]$ is the Hessian of a symmetric
 nc polynomial $p$ the  middle matrix $Z$ takes a rigid form
 which has been exploited earlier in
 \cite{HM04}, \cite{CHSY03}, \cite{DHMind},
 \cite{DHMjda}
 and \cite{DGHM}:
\begin{equation}
 \label{eq:defs-middle}
   \begin{split}
   p^{\prime\prime}(x)[h]=& V(x)[h]^TZ(x)V(x)[h]  \\
   =& [V^T_0, V^T_1, \ldots, V^T_\ell]
  \left[\begin{array}{ccccc}
  Z_{00}&Z_{01}& \cdots&Z_{0, \ell-1}& Z_{0\ell}\\
  Z_{10}&Z_{11}& \cdots&Z_{1, \ell-1}&0\\
  \vdots&\vdots&&\vdots&\vdots\\
  Z_{\ell 0}&0& \cdots&0&0\end{array}
  \right]
  \left[\begin{array}{c}
  V_0\\V_1\\ \vdots\\ V_{\ell}\end{array}\right],
 \end{split}
\end{equation}
in which $\ell=d-2$, $V(x)[h]$ is the border vector with vector components
$V_j(x)[h]$ of height $g^{j+1}$,
 and $Z(x)=[Z_{ij}(x)]$, $i, j=0,\ldots,d-2$,
the {\bf middle matrix},
is a symmetric matrix  polynomial with matrix polynomial
entries $Z_{ij}(x)$
of size
$g^{i+1}\times g^{j+1}$ and degree no more than $(d-2)-(i+j)$
for $i+j\le d-2$ with
$Z_{ij}(x)=0$ for $i+j> d-2$.
Since $p$ is symmetric $Z_{ij}=Z_{ji}^T$ and since $p$ has
degree $d$,   $Z_{ij}$ is constant when $i+j=d-2$.

The matrix $\mathcal Z=Z(0)$, evaluated at
$0\in\mathbb R^g,$  will be called the {\bf scalar middle matrix} of
$p^{\prime\prime}$.
The main conclusions  from \cite{DHMjda} that are
relevant to this paper are:
\begin{enumerate}
\item[\rm(1)] $Z(x)$ is polynomially congruent to the
   scalar middle  matrix ${\cZ}=Z(0)$, i.e.,
   there exists a matrix polynomial
$B(x)$ with an inverse $B(x)^{-1}$ that is again
a matrix polynomial such that
\begin{equation}
\label{eq:may13a7}
\cZ=Z(0)=B(x)^TZ(x)B(x)\,.
\end{equation}
\vspace{2mm}
\item[\rm(2)] $\mu_{\pm}({\cZ})=\sigma^{min}_\pm(p^{\prime\prime}(x)[h])$.
\vspace{2mm}
\item[\rm(3)] If  $X\in\gtupn$, then
\begin{equation}
\label{eq:apr30a7}
\mu_\pm (Z(X)) =n\mu_\pm(\cZ)\,.
\end{equation}
\item[\rm(4)] The degree $d$ of $p(x)$ is subject to the bound
\begin{equation}
\label{eq:nov10a6}
d\le 2\mu_{\pm}({\cZ})+2\,.
\end{equation}
\end{enumerate}
  Here $\mu_\pm$ are the number of positive/negative eigenvalues of
  the indicated matrix. Note that item \eqref{eq:nov10a6} bounds
  the degree of $p$ in terms of the signature (the number of positive,
  negative and zero eigenvalues) of the middle matrix of its 
  Hessian. 

 The relaxed Hessian $p^{\prime\prime}_{\lambda,\delta}$ also has a
 middle matrix-border vector representation.
 For the special case where $\delta=0$,
 in terms of the  notation introduced in
\eqref{eq:may14a7}, we have
\begin{equation*}
  p^{\prime\prime}_{\lambda,0}(x)[h]
    = \widetilde{V}(x)[h]^T Z_{\lambda}(x) \widetilde{V}(x)[h].
\end{equation*}
The polynomial congruence of equation \eqref{eq:may13a7}
 extends to $Z_\lambda$
 in that
\begin{equation*}
  Z_\lambda(x)  \sim Z_\lambda(0)=: \mathcal{Z}_\lambda =
    \begin{bmatrix} \mathcal Z & 0 \\ 0 & \lambda W\end{bmatrix},
\end{equation*}
 where $W$ is a rank one positive matrix and $\sim$ denotes
 a polynomial congruence which is
 independent of $\lambda$.


\begin{proposition}
 \label{prop:scalar-middle-modified}
   If $X\in\Smatng$, then
\begin{equation*}
     \mu_\pm(Z_\lambda(X)) = n \mu_\pm (\mathcal Z_\lambda).
  \end{equation*}
Moreover, if $\lambda >0$, then
 \begin{equation*}
     \mu_+(\mathcal Z_\lambda) =  \mu_+(\mathcal Z)+1 \quad\textrm{and}\quad
     \mu_-(\mathcal Z_\lambda) = \mu_-(\mathcal Z);
 \end{equation*}
whereas, if $\lambda <0$, then
 \begin{equation*}
     \mu_+(\mathcal Z_\lambda) =  \mu_+(\mathcal Z) \quad\textrm{and}\quad
     \mu_-(\mathcal Z_\lambda) = \mu_-(\mathcal Z)+  1.
 \end{equation*}
\end{proposition}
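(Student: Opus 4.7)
The plan is to reduce both claimed identities to Sylvester's law of inertia applied to two congruences. For the first, I will lift the $\lambda$-independent polynomial congruence $\cZ_\lambda = B(x)^T Z_\lambda(x) B(x)$ (stated just above the proposition and inherited from \eqref{eq:may13a7}) to a matrix congruence obtained by evaluating at an arbitrary $X \in \Smatng$. For the second, I will simply inspect the displayed block-diagonal form of $\cZ_\lambda$.

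In more detail: substituting $X$ entrywise into the free-algebra identity $\cZ_\lambda = B(x)^T Z_\lambda(x) B(x)$ produces a matrix identity $B(X)^T Z_\lambda(X) B(X) = \cZ_\lambda \otimes I_n$, where on the right each scalar entry $c$ of the constant matrix $\cZ_\lambda$ becomes $cI_n$ under evaluation. Because $B(x)$ admits a matrix-polynomial inverse, $B(X)$ is invertible, and the displayed identity is a genuine matrix congruence. Sylvester's law of inertia then yields
\begin{equation*}
  \mu_\pm(Z_\lambda(X)) = \mu_\pm(\cZ_\lambda \otimes I_n) = n \, \mu_\pm(\cZ_\lambda),
\end{equation*}
which is exactly the first identity. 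This is precisely the step that gives \eqref{eq:apr30a7}, now applied to the relaxed middle matrix.

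For the computation of $\mu_\pm(\cZ_\lambda)$, block-diagonality of
\begin{equation*}
  \cZ_\lambda = \begin{bmatrix} \cZ & 0 \\ 0 & \lambda W \end{bmatrix}
\end{equation*}
gives $\mu_\pm(\cZ_\lambda) = \mu_\pm(\cZ) + \mu_\pm(\lambda W)$. Since $W$ is a rank one positive matrix, $\lambda W$ has exactly one nonzero eigenvalue, with sign equal to the sign of $\lambda$. Hence $\mu_+(\lambda W) = 1$ and $\mu_-(\lambda W) = 0$ when $\lambda > 0$, while $\mu_+(\lambda W) = 0$ and $\mu_-(\lambda W) = 1$ when $\lambda < 0$. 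Summing with $\mu_\pm(\cZ)$ gives the two boxed formulas.

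The only step with any subtlety is the first, which hinges on $B(x)$ being chosen \emph{independently of} $\lambda$; this is exactly what is asserted in the paragraph immediately preceding the proposition, itself leveraging the structure of the congruence in \eqref{eq:may13a7} from \cite{DHMjda} together with the rank-one nature of the $\lambda$ perturbation. Once that congruence is in hand, the remainder is a short chain of inertia bookkeeping with no further calculation.
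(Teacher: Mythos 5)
Your proposal is correct and follows the same route as the paper, which simply declares the result ``an immediate consequence of the polynomial congruence for $Z_\lambda(x)$''; you have merely made explicit the two steps that phrase compresses, namely evaluating the $\lambda$-independent congruence at $X$ and invoking Sylvester's law of inertia, and then reading off the inertia of the block-diagonal $\cZ_\lambda$ from the rank-one positive block $\lambda W$. No gaps.
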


\begin{proof}
This is an immediate consequence of the polynomial congruence for
$Z_\lambda(x)$ that is described above.
\end{proof}

The middle matrix representation for the relaxed Hessian is
\begin{equation*}
 \label{eq:middle-border-relaxed}
    p^{\prime\prime}_{\lambda,\delta}(x)[h]
      = \widetilde{V}(x)[h]^T Z_{\lambda,\delta}(x) \widetilde{V}(x)[h],
\end{equation*}
where
\begin{equation*}
   Z_{\lambda,\delta}(x)= Z_\lambda(x) + \delta I.
\end{equation*}
  The form of $Z_{\lambda,\delta}$ and the polynomial
  congruence for $Z_\lambda$ together yield the following
  variant of Proposition \ref{prop:scalar-middle-modified}, which is
needed for this paper.

\begin{proposition}
\label{prop:scalar-middle-relaxed}
   Let $X\in \Smatng$ be given.
   There exists an $\epsilon >0$ so that if $0\le \delta <\epsilon$, then
 \begin{equation*}
   \mu_-(Z_{\lambda,\delta}(X)) = n\mu_-(\mathcal Z)\quad\text{for every} \
\lambda\ge 0.
 \end{equation*}
  Similarly, there exists an $\epsilon <0$ so that if
  $\epsilon < \delta \le 0$, then
 \begin{equation*}
   \mu_+(Z_{\lambda,\delta}(X)) = n\mu_+(\mathcal Z)\quad\text{for every}
\ \lambda \le 0.
 \end{equation*}
\end{proposition}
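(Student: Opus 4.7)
The plan is to transport the $\delta I$ perturbation through the polynomial congruence $Z_\lambda(x)\sim\mathcal Z_\lambda$ and then run a uniform eigenvalue perturbation argument on the resulting constant matrix. Since this congruence is implemented by a polynomial matrix $B(x)$ (with polynomial inverse) that is independent of $\lambda$, evaluation of $B(x)^TZ_\lambda(x)B(x)=\mathcal Z_\lambda$ at $X\in\Smatng$ gives
\begin{equation*}
  B(X)^T\,Z_{\lambda,\delta}(X)\,B(X) \;=\; \mathcal Z_\lambda\otimes I_n \;+\; \delta\,M,
\end{equation*}
where $M:=B(X)^TB(X)$ is a fixed positive definite matrix (because $B(X)$ is invertible) and the constant matrix $\mathcal Z_\lambda$ substitutes as $\mathcal Z_\lambda\otimes I_n$ at the $n$-tuple $X$. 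By Sylvester's law of inertia, it suffices to control the inertia of $\mathcal Z_\lambda\otimes I_n + \delta M$ uniformly in $\lambda\ge 0$.

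The second step exploits the explicit block structure $\mathcal Z_\lambda=\mathrm{diag}(\mathcal Z,\lambda W)$ with $W$ of rank one and positive semidefinite. For every $\lambda\ge 0$ the block $\lambda W\succeq 0$ contributes only nonnegative eigenvalues, so the strictly negative eigenvalues of $\mathcal Z_\lambda$ coincide with those of $\mathcal Z$; in particular, when $\mu_-(\mathcal Z)>0$, the largest strictly negative eigenvalue $\mu_*<0$ is a fixed constant independent of $\lambda$. Tensoring with $I_n$ multiplies multiplicities by $n$ but preserves the set of nonzero eigenvalues, so $\mathcal Z_\lambda\otimes I_n$ has exactly $n\mu_-(\mathcal Z)$ negative eigenvalues, each at most $\mu_*$, uniformly in $\lambda\ge 0$.

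The final step is a standard Weyl perturbation argument with a $\lambda$-independent threshold. Adding $\delta M\succeq 0$ shifts each eigenvalue upward by at most $\delta\|M\|$, so every negative eigenvalue of $\mathcal Z_\lambda\otimes I_n$ remains strictly negative once $\delta\|M\|<|\mu_*|$, while no nonnegative eigenvalue can turn negative. Taking $\epsilon:=|\mu_*|/\|M\|$ (or any $\epsilon>0$ when $\mu_-(\mathcal Z)=0$) then gives $\mu_-(Z_{\lambda,\delta}(X))=n\mu_-(\mathcal Z)$ for all $\lambda\ge 0$ and $0\le\delta<\epsilon$. The $\epsilon<0$ assertion is the mirror statement: for $\lambda\le 0$ one has $\lambda W\preceq 0$, so the positive eigenvalues of $\mathcal Z_\lambda$ reduce to the $\lambda$-independent positive eigenvalues of $\mathcal Z$, and the analogous Weyl bound with signs reversed shows that a small negative perturbation $\delta M$ preserves $\mu_+$ at the value $n\mu_+(\mathcal Z)$. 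The only conceptually nontrivial point, and the place where I would expect any subtlety, is the observation that $\lambda$-uniformity of the perturbation threshold comes for free from the block structure of $\mathcal Z_\lambda$: all of the $\lambda$-dependence is isolated in a single eigenvalue of the harmless sign, which never interferes with the count of opposite-sign eigenvalues.
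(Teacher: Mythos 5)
Your proof is correct and follows essentially the same route as the paper's: conjugate $Z_{\lambda,\delta}(X)=Z_\lambda(X)+\delta I$ by the ($\lambda$-independent) congruence to get $\mathcal Z_\lambda\otimes I_n$ plus $\delta$ times a fixed positive definite matrix, then apply eigenvalue monotonicity under a small semidefinite perturbation. The one place you go beyond the paper's write-up is in making explicit why the threshold $\epsilon$ can be chosen uniformly in $\lambda$ --- namely that the block form $\mathcal Z_\lambda=\operatorname{diag}(\mathcal Z,\lambda W)$ isolates all $\lambda$-dependence in eigenvalues of the harmless sign, so the gap $|\mu_*|$ is $\lambda$-independent; the paper's proof relies on this implicitly.
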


\begin{proof}
The preceding discussion implies that  $Z_{\lambda,\delta}(X)$
is polynomially
congruent to a sum of real symmetric matrices of the form
$$
A+\delta B\quad\textrm{where} \quad A\sim Z_{\lambda,0}(X),\quad
\text{and}\ B\succ 0.
$$
Therefore, if the eigenvalues of each of these matrices are indexed  in
increasing order, i.e., $\lambda_1\le \lambda_2\le\cdots$, it follows
readily from
the Courant-Fischer theorem that if $\delta\ge 0$, then
$$
\lambda_j(A)\le \lambda_j(A+\delta B),
$$
i.e., an additive perturbation of $A$ by  the positive definite matrix
$\delta B$ shifts the eigenvalues of $A$ to the right. Thus, each
nonnegative eigenvalue of $A$ moves into a nonnegative
eigenvalue of $A+\delta B$.
On the other hand, if $\delta\ge 0$ is kept sufficiently
small, so
that  the shift to the right is small, the negative eigenvalues
of $A$ will move into negative eigenvalues of  $A+\delta B$.
Since
$$
\mu_-(A)=\mu_-(Z_{\lambda, 0}(X))=n\mu_-(\cZ)\quad\textrm{for every}\
\lambda \ge 0,
$$
this
completes the proof of the first assertion.
The proof of the second is similar.
\end{proof}




\subsection{Relaxed Hessian example}
 \label{subsec:relaxed-example}
 The example in this subsection illuminates the
middle matrix representation of
 the relaxed Hessian.

\begin{example}\rm
 \label{ex:relaxed}
Let $p(X)=X^3$. Then
$$
p^\prime(X)[H]=X^2H+XHX+HX^2\quad\textrm{and}\quad
p^{\prime\prime}(X)[H]=2HXH+2XH^2+2H^2X.
$$
Therefore,
$$
p^{\prime\prime}_{\lambda,\delta}(X)[H]=
\begin{bmatrix}H\\HX\\HX^2\end{bmatrix}^T
\left\{
\begin{bmatrix}2X&2I&0\\2I&0&0\\0&0&0\end{bmatrix}
+
\lambda\begin{bmatrix}X^2\\X\\I\end{bmatrix}\begin{bmatrix}X^2&X&I\end{bmatrix}
+\delta\begin{bmatrix}I&0&0\\0&I&0\\0&0&I\end{bmatrix}\right\}
\begin{bmatrix}H\\HX\\HX^2\end{bmatrix}.
$$
The middle matrix for the relaxed Hessian is inside the braces.

Moreover, since
$$
\begin{bmatrix}2X&2I&0\\2I&0&0\\0&0&0\end{bmatrix}+
\lambda\begin{bmatrix}X^2\\X\\I\end{bmatrix}
\begin{bmatrix}X^2&X&I\end{bmatrix}
$$
\begin{eqnarray*}
&=&\begin{bmatrix}I&0&X^2\\0&I&X\\0&0&I\end{bmatrix}\begin{bmatrix}I&X/2&0
\\0&I&0\\0&0&I\end{bmatrix}
\begin{bmatrix}0&2I&0\\2I&0&0\\0&0&\lambda I\end{bmatrix}
\begin{bmatrix}I&0&0\\X/2&I&0\\0&0&I\end{bmatrix}
\begin{bmatrix}I&0&0\\0&I&0\\X^2&X&I\end{bmatrix}\\
&=&\begin{bmatrix}I&X/2&X^2\\0&I&X\\0&0&I\end{bmatrix}
\begin{bmatrix}0&2I&0\\2I&0&0\\0&0&\lambda I\end{bmatrix}
\begin{bmatrix}I&0&0\\X/2&I&0\\X^2&X&I\end{bmatrix},
\end{eqnarray*}
we have
$$
p^{\prime\prime}_{\lambda,\delta}(X)[H]=
\begin{bmatrix}H\\HX\\HX^2\end{bmatrix}^T\left\{
\begin{bmatrix}I&X/2&X^2\\0&I&X\\0&0&I\end{bmatrix}
\begin{bmatrix}0&2I&0\\2I&0&0\\0&0&\lambda I\end{bmatrix}
\begin{bmatrix}I&0&0\\X/2&I&0\\X^2&X&I\end{bmatrix}+\delta I\right\}
\begin{bmatrix}H\\HX\\HX^2\end{bmatrix}.
$$

\def\ga{\mathfrak{a}}
\def\gb{\mathfrak{b}}
\def\gc{\mathfrak{c}}

As a more concrete special case,
suppose $X=\textup{diag}\{I_2,-I_3\}$,
$v_1^T=\begin{bmatrix}a&0\end{bmatrix}$,
$v_2^T=\begin{bmatrix}0&0&e\end{bmatrix}$,
$a > e > 0$ and
$v=\textup{col}(v_1,v_2)$.
Then
\begin{multline}
\left\{\begin{bmatrix}H\\HX\\HX^2\end{bmatrix}v:
 \  H\in\RR^{5\times 5}_{sym}
\right\}\\
=\textup{span}\left\{\begin{bmatrix}u_1\\u_1\\u_1\end{bmatrix},\,
\begin{bmatrix}u_2\\u_2\\u_2\end{bmatrix},\,
\begin{bmatrix}u_3\\u_3\\u_3\end{bmatrix},\,
\begin{bmatrix}u_4\\u_4\\u_4\end{bmatrix},\,
\begin{bmatrix}au_5 +eu_1\\au_5-eu_1\\au_5+eu_1\end{bmatrix},\,
\begin{bmatrix}0\\u_2\\0\end{bmatrix},\,
\begin{bmatrix}0\\u_3\\0\end{bmatrix},\,
\begin{bmatrix}0\\u_4\\0\end{bmatrix},\,
\begin{bmatrix}u_5\\-u_5\\u_5\end{bmatrix}\right\},
\end{multline}
where $u_j$ denotes the $j$th standard basis vector for $\RR^5$ for
$j=1,\ldots,5$.
A vector in this span is of the form
$$
w=\begin{bmatrix}\ga_1 +\gb_1\\ \ga_2+ \gb_2\\ \ga_1+ \gb_1\end{bmatrix},
$$
where
$$
\ga_1 =(\alpha_1+\alpha_5e)u_1+\alpha_2u_2, \quad
\ga_2 =(\alpha_1-\alpha_5e)u_1+(\alpha_2+\alpha_6)u_2,
$$
$$
\gb_1=\alpha_3u_3+\alpha_4u_4+(\alpha_5a+\alpha_9)u_5\quad\textrm{and}\quad
\gb_2=(\alpha_3+\alpha_7)u_3+(\alpha_4+\alpha_8)u_4+(\alpha_5a-\alpha_9)u_5
$$
for some choice of $\alpha_1,\ldots,\alpha_9\in\RR$.

Next,
since $X\ga_j =\ga_j$, $X\gb_j =-\gb_j$ for $j=1,2$, it is readily seen that
$$
\begin{bmatrix}I&0&0\\X/2&I&0\\I&X&I\end{bmatrix}
\begin{bmatrix} \ga_1+\gb_1\\ \ga_2+\gb_2\\ \ga_1+\gb_1\end{bmatrix}=
\begin{bmatrix} \ga_1+\gb_1\\ \ga_3+\gb_3\\ \ga_4+\gb_4\end{bmatrix},
$$
where
$$
\ga_3=\ga_2+(1/2)\ga_1,\, \gb_3=\gb_2-(1/2)\gb_1,\, \ga_4=2\ga_1+\ga_2,\
\textrm{and}\
\gb_4=2\gb_1-\gb_2.
$$
Thus, as
$$
\ga_i^T\gb_j=0\quad\textrm{for}\ i,j=1,\ldots,4,
$$
it is readily seen that
$
v^Tp^{\prime\prime}_{\lambda,\delta}(X)[H]v$ is of the form
\begin{eqnarray*}
&{}&w^T\left\{
\begin{bmatrix}I&X/2&X^2\\0&I&X\\0&0&I\end{bmatrix}
\begin{bmatrix}0&2I&0\\2I&0&0\\0&0&\lambda I\end{bmatrix}
\begin{bmatrix}I&0&0\\X/2&I&0\\X^2&X&I\end{bmatrix}+\delta I\right\}w\\
&=&
\ga_1^T\ga_3+\ga_3^T\ga_1+\gb_3^T\gb_1+\gb_1^T\gb_3+\lambda(\ga_4^T\ga_4+
\gb_4^T\gb_4)\\
&{}&  +\delta(2\ga_1^T\ga_1+\ga_2^T\ga_2+2\gb_1^T\gb_1+\gb_2^T\gb_2).
\end{eqnarray*}
In particular, the term to the right of $\delta$ is equal to zero if and
only if
$$\alpha_1=\cdots=\alpha_9=0.$$
\end{example}

\begin{remark}\rm
We remark that since
$$
Q:=\begin{bmatrix}I&X/2&X^2\\0&I&X\\0&0&I\end{bmatrix}^{-1}=
\begin{bmatrix}I&-X/2&-X^2/2\\0&I&-X\\0&0&I\end{bmatrix},
$$
the formula for the relaxed Hessian can be re-expressed as
\[
\begin{split}
  p^{\prime\prime}_{\lambda,\delta}(X)[H]=
\begin{bmatrix}H\\HX\\HX^2\end{bmatrix}^T
\begin{bmatrix}I&X/2&X^2\\0&I&X\\0&0&I\end{bmatrix}\left\{
\begin{bmatrix}0&2I&0\\2I&0&0\\0&0&\lambda I\end{bmatrix}+\delta QQ^T\right\}
\times \\
 \times \begin{bmatrix}I&0&0\\ X/2&I&0\\X^2&X&I\end{bmatrix}
\begin{bmatrix}H\\HX\\HX^2\end{bmatrix}.
\end{split}
\]
\end{remark}


\section{The CHSY Lemma}
 \label{sec:CHSY}
 The CHSY Lemma (which is based on Lemma 9.5 in \cite{CHSY03}) is the key tool relating
 the signature of the middle matrix of a quadratic form $q$ with the
 signature of $q$.
 In this section we develop a version of this lemma that is  required
 for the proof of Theorem \ref{thm:sigmain}.

 Let
 $(X, v)\in \Smatng\times\RR^n$, let $H\in \Smatng\times\RR^n$ and let
\begin{equation}
 \label{eq:sep2a7}
  {\mathcal R}_s(H)=\begin{bmatrix}V_0(X)[H]v\\ \vdots \\ V_s(X)[H]v
  \end{bmatrix}\quad\textrm{and}\quad \mathcal{R}_s(\cH)=\{\mathcal{R}_s(H):\, H\in{\cH}\}\,,
\end{equation}
 where  the vectors $V_j(x)[h]$
are defined just above (\ref{eq:may14a7}), ${\cH}$ is a subspace of $\Smatng$ and 
 $\mathcal R_s(\mathcal H)$ is a subspace of
 $\mathbb R^{ng\alpha_s}$.

\begin{lemma}[CHSY Lemma]
 \label{lem:CHSY}
  Given a pair of positive integers $g$ and $r$, a matrix $X\in\gtupn$
  and a vector $v\in \mathbb R^{n}$, suppose that the set 
  $$
  \{w(X)v:\ w\  \textrm{is a word with}\ \vert w\vert\le r\}
  $$
   is a linearly independent
  subset of $\mathbb R^n$. Then $\mathcal{R}_s(\gtupn)$ is a subspace of
  $\RR^{ng\alpha_s}$ and
\begin{equation}
 \label{eq:aug22a7}
   \textup{codim}\,\mathcal{R}_s(\gtupn)\le ng(\alpha_s-\alpha_r)
+g\alpha_r
   \frac{\alpha_r-1}{2}
   \quad\text{if}\quad s\ge r.
\end{equation}

  If $s=r$, then the codimension of $\mathcal R_r(\gtupn)$
  is independent of $n$ and
\begin{equation}
 \label{eq:sequalsr}
    \textup{codim}\, \mathcal R_r(\gtupn) =  g\alpha_r
\frac{\alpha_r-1}{2}.
\end{equation}
\end{lemma}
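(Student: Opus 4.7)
The plan is a rank--nullity count that decouples across the $g$ components of $H$. First, since the entries of $\mathcal R_s(H)$ are linear in $H$, the image $\mathcal R_s(\gtupn)$ is automatically a subspace of $\RR^{ng\alpha_s}$. Writing $H=(H_1,\ldots,H_g)$, the map splits as an external direct sum
\[
  \mathcal R_s(H)=\bigoplus_{\ell=1}^{g}\phi_\ell(H_\ell),\qquad \phi_\ell\colon\SMATN{n}\to\RR^{n\alpha_s},\qquad \phi_\ell(H_\ell)=\bigl(H_\ell\, w(X)v\bigr)_{|w|\le s}.
\]
Because distinct values of $\ell$ use distinct $H_\ell$ and occupy orthogonal coordinate blocks of $\RR^{ng\alpha_s}$, one has $\operatorname{codim}\mathcal R_s(\gtupn)=\sum_{\ell=1}^g \operatorname{codim}\phi_\ell(\SMATN{n})$.

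The main step is then to bound each summand. Set $\tau_s:=\dim\operatorname{span}\{w(X)v:|w|\le s\}$; the linear-independence hypothesis forces $\tau_s\ge\tau_r=\alpha_r$. The kernel of $\phi_\ell$ consists of the symmetric $H_\ell$ that annihilate this span, so in an orthogonal decomposition of $\RR^n$ into the span and its complement such $H_\ell$ are zero on the span and arbitrary symmetric on the complement. Hence
\[
 \dim\ker\phi_\ell=\frac{(n-\tau_s)(n-\tau_s+1)}{2}\le\frac{(n-\alpha_r)(n-\alpha_r+1)}{2}.
\]
Rank--nullity now gives
\[
 \operatorname{codim}\phi_\ell(\SMATN{n}) = n\alpha_s-\frac{n(n+1)}{2}+\dim\ker\phi_\ell,
\]
and a direct expansion shows the $n(n+1)/2$ terms cancel, leaving the upper bound $n(\alpha_s-\alpha_r)+\alpha_r(\alpha_r-1)/2$. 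Summing over $\ell=1,\ldots,g$ yields \eqref{eq:aug22a7}.

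For the equality \eqref{eq:sequalsr} take $s=r$: the hypothesis makes $\tau_r=\alpha_r$ exactly, so all the inequalities above are equalities, and the codimension works out to $g\alpha_r(\alpha_r-1)/2$, independent of $n$.

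There is no real obstacle here; the argument is essentially a symmetric-matrix-annihilator count. The conceptual point worth noting is that the symmetry constraint on each $H_\ell$ is precisely what produces the quadratic-in-$\alpha_r$ correction: were $H_\ell$ allowed to be arbitrary, the codimension would drop to zero whenever $\tau_s=\alpha_s$, and no linear-independence hypothesis would even be needed.
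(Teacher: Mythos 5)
Your proof is correct. Note that the paper itself does not prove this lemma: it is one of the two results explicitly imported from elsewhere (Lemma 9.5 of [CHSY03]), so there is no internal proof to compare against; what you have written is a complete, self-contained argument along the standard lines. The two load-bearing observations are both handled properly: (i) the map $H\mapsto \mathcal R_s(H)$ decouples into $g$ independent maps $\phi_\ell$ acting on the separate components $H_\ell$ and landing in complementary coordinate blocks, so codimensions add; and (ii) the kernel of each $\phi_\ell$ is the annihilator in $\SMATN{n}$ of $W_s=\operatorname{span}\{w(X)v:|w|\le s\}$, which by symmetry of $H_\ell$ is the set of symmetric matrices vanishing on $W_s$ with range in $W_s^\perp$, of dimension $(n-\tau_s)(n-\tau_s+1)/2$. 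The hypothesis gives $\tau_s\ge\alpha_r$, the expansion $n\alpha_s-\tfrac{n(n+1)}{2}+\tfrac{(n-\alpha_r)(n-\alpha_r+1)}{2}=n(\alpha_s-\alpha_r)+\tfrac{\alpha_r(\alpha_r-1)}{2}$ checks out, and for $s=r$ the hypothesis pins $\tau_r=\alpha_r$ exactly, giving the stated equality. Your closing remark correctly identifies that the quadratic correction term is an artifact of the symmetry constraint on $H_\ell$; this is exactly the feature that makes the CHSY count nontrivial.
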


\begin{remark}
 \label{rem:sep12a7}
  It is important to bear in mind that if $\cH$ is a subspace of
$\Smatng$, then
$$
 \textup{codim}\,(\cH)=g\frac{n(n+1)}{2}-\textup{dim}\,(\cH)\,,
$$
whereas
$$
 \textup{codim}\,(\mathcal R_s(\cH))
 =ng\alpha_s-\textup{dim}\,(\mathcal R_s(\cH))
 \,.
$$
\end{remark}

\begin{lemma}
 \label{lem:aug30a7}
  If  ${\cH}$ and ${\cH}^c$ are
  complementary subspaces of $\Smatng$, i.e., if
$$
\Smatng={\cH}\dot{+}{\cH}^c\,,
$$
  then ${\mathcal R}_s(\cH)$ is a subspace
  of $\RR^{ng\alpha_s}$ and
\begin{equation*}
 \begin{split}
   \textup{codim}\,{\mathcal R}_s(\cH)\le &  \textup{codim}\,{\mathcal R}_s(\Smatng)
     +\textup{codim}\,\cH \\
   \textup{codim}\,{\mathcal R}_s(\cH^c)\le &  \textup{codim}\,{\mathcal R}_s(\Smatng)
     +\textup{dim}\,\cH \, .
 \end{split}
\end{equation*}
\end{lemma}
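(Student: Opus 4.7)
The key observation is that the map $H \mapsto \mathcal{R}_s(H)$ defined by \eqref{eq:sep2a7} is linear in $H$, since each component of $V_j(x)[h]$ is a word in $x$ with a single factor $h_l$, so each entry of $V_j(X)[H]v$ depends linearly on $H$. Hence $\mathcal{R}_s$ is a linear map from $\Smatng$ into $\RR^{ng\alpha_s}$, and therefore $\mathcal{R}_s(\mathcal{H})$ is a subspace of $\RR^{ng\alpha_s}$ for any subspace $\mathcal{H} \subseteq \Smatng$.

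With linearity in hand, the plan is a standard dimension count. Since $\Smatng = \mathcal{H} + \mathcal{H}^c$, linearity of $\mathcal{R}_s$ gives $\mathcal{R}_s(\Smatng) = \mathcal{R}_s(\mathcal{H}) + \mathcal{R}_s(\mathcal{H}^c)$, so
\[
\dim \mathcal{R}_s(\Smatng) \le \dim \mathcal{R}_s(\mathcal{H}) + \dim \mathcal{R}_s(\mathcal{H}^c) \le \dim \mathcal{R}_s(\mathcal{H}) + \dim \mathcal{H}^c,
\]
where the second inequality uses the fact that $\dim \mathcal{R}_s(\mathcal{H}^c) \le \dim \mathcal{H}^c$ for any linear image. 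Subtracting both sides from $ng\alpha_s$ (the dimension of the ambient space) and recalling $\dim \mathcal{H}^c = \textup{codim}\,\mathcal{H}$ yields
\[
\textup{codim}\,\mathcal{R}_s(\mathcal{H}) \le \textup{codim}\,\mathcal{R}_s(\Smatng) + \textup{codim}\,\mathcal{H},
\]
which is the first asserted inequality. The second inequality follows by interchanging the roles of $\mathcal{H}$ and $\mathcal{H}^c$ and using $\textup{codim}\,\mathcal{H}^c = \dim \mathcal{H}$.

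There is no real obstacle here beyond recognizing that $\mathcal{R}_s$ is a linear operator and that the sum decomposition $\Smatng = \mathcal{H} + \mathcal{H}^c$ passes through the image. The result does not require the full rank or linear independence hypothesis of Lemma \ref{lem:CHSY}; it is purely a fact of linear algebra about a linear map restricted to complementary subspaces. The estimates from Lemma \ref{lem:CHSY} enter only later, when the term $\textup{codim}\,\mathcal{R}_s(\Smatng)$ on the right-hand side is bounded by $ng(\alpha_s - \alpha_r) + g\alpha_r(\alpha_r-1)/2$ in subsequent applications.
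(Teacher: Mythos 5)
Your proof is correct and follows essentially the same route as the paper: both arguments rest on the dimension count $\dim\mathcal{R}_s(\Smatng)\le\dim\mathcal{R}_s(\cH)+\dim\mathcal{R}_s(\cH^c)\le\dim\mathcal{R}_s(\cH)+\dim\cH^c$ followed by a translation into codimensions, with the second inequality obtained by swapping $\cH$ and $\cH^c$. Your explicit remarks that $\mathcal{R}_s$ is linear in $H$ and that the CHSY linear-independence hypothesis is not needed are correct and, if anything, slightly more careful than the paper's version.
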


\begin{proof}
If ${\cH}^c$ is a complementary subspace to $\cH$ in $\Smatng$, then
\begin{eqnarray*}
\textup{dim}\,{\mathcal R}_s(\Smatng)&=&\textup{dim}\,{\mathcal R}_s(\cH)+
\textup{dim}\,{\mathcal R}_s(\cH^c)\\
&\le &\textup{dim}\,{\mathcal R}_s(\cH)+\textup{dim}\,\cH^c\,.
\end{eqnarray*}
 The first asserted inequality now follows easily upon re-expressing the last
 inequality in terms of codimensions.

  The second asserted inequality follows from the first by replacing $\mathcal H$
  by $\mathcal H^c$ and noting that the dimension of $\mathcal H$ is equal
  the codimension of $\mathcal H^c$.
\end{proof}

\begin{lemma}
\label{lem:nov25a7}
Let $Z(x)$ denote the middle matrix in the representation (\ref{eq:sep2b7})
of a symmetric nc polynomial $f(x)[h]$ of degree $s$ in the $g$
symmetric
variables $x=(x_1,\ldots,x_g)$ that is
homogeneous of
degree two in the $g$ symmetric variables $h=(h_1,\ldots,h_g)$, and
let $\cH$  and $\cG$ denote subspaces of
$\Smatng$ such that
$$
\langle f(X)[H]v, v\rangle< 0\quad\text{if $H\in\cH$ and $H\ne 0$}
\quad \text{and}\quad  \langle f(X)[H]v, v\rangle\ge 0 \quad\text{if}
\ H\in\cG.
$$
Then
\begin{equation}
\label{eq:nov25b7}
\textup{codim}\,{\cR}_s(\cG)\ge
\mu_-(Z(X))\ge \textup{dim}\,\cR_s(\cH)\,,
\end{equation}
\begin{equation}
\label{eq:nov25c7}
\textup{codim}\,{\cR}_s(\cH)\ge
\mu_+(Z(X))+\mu_0(Z(X))\ge \textup{dim}\,\cR_s(\cG)
\end{equation}
and
\begin{equation}
\label{eq:dec28a7}
\textup{dim}\,\cR_s(\cH)\ge \textup{dim}\,\cH\,.
\end{equation}
\end{lemma}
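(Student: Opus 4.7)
The plan is to use the middle matrix--border vector representation to realize the quadratic form $H\mapsto \langle f(X)[H]v,v\rangle$ on $\Smatng$ as the pullback, under the linear map $\mathcal{R}_s$, of the indefinite form $\xi\mapsto \langle Z(X)\xi,\xi\rangle$ on $\RR^{ng\alpha_s}$, and then read off the asserted inequalities from classical bounds on the dimension of definite subspaces of an indefinite quadratic form.

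First I would unwind the representation (\ref{eq:sep2b7}). Substituting $X$ for $x$ and $H$ for $h$ and then applying to $v$ on both sides shows that, for every $H\in\Smatng$,
\begin{equation*}
 \langle f(X)[H]v,v\rangle \;=\; \langle Z(X)\,\mathcal{R}_s(H),\,\mathcal{R}_s(H)\rangle_{\RR^{ng\alpha_s}},
\end{equation*}
where $\mathcal{R}_s(H)$ is the column vector defined in (\ref{eq:sep2a7}). Consequently, the quadratic form on $\Smatng$ that we care about factors through the linear map $\mathcal{R}_s:\Smatng\to\RR^{ng\alpha_s}$ and the symmetric matrix $Z(X)$.

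Next I would apply the standard signature count for indefinite forms: if $U\subseteq\RR^{ng\alpha_s}$ is a non-negative subspace for $Z(X)$, then $\dim U\le \mu_+(Z(X))+\mu_0(Z(X))$; if $U$ is strictly negative, then $\dim U\le \mu_-(Z(X))$. By the hypothesis on $\cG$ and the factorization above, $\mathcal{R}_s(\cG)$ is non-negative for $Z(X)$, so
\begin{equation*}
\dim \mathcal{R}_s(\cG)\le \mu_+(Z(X))+\mu_0(Z(X)),
\end{equation*}
and by the hypothesis on $\cH$, $\mathcal{R}_s(\cH)$ is strictly negative for $Z(X)$, so
\begin{equation*}
\dim \mathcal{R}_s(\cH)\le \mu_-(Z(X)).
\end{equation*}
Combining with $\dim+\textup{codim}=ng\alpha_s=\mu_+(Z(X))+\mu_0(Z(X))+\mu_-(Z(X))$ converts these into the dimension/codimension bounds (\ref{eq:nov25b7}) and (\ref{eq:nov25c7}).

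Finally, for (\ref{eq:dec28a7}) it suffices to show that $\mathcal{R}_s$ restricted to $\cH$ is injective: if $H\in\cH$ and $\mathcal{R}_s(H)=0$, then by the factorization $\langle f(X)[H]v,v\rangle=0$, whereas $\cH$ is strictly negative; hence $H=0$. Injectivity yields $\dim \mathcal{R}_s(\cH)=\dim\cH\ge\dim\cH$.

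There is no real obstacle here once the representation is in hand; the slightly delicate point is bookkeeping the passage from the quadratic form on $\Smatng$ to the quadratic form on the border-vector space $\RR^{ng\alpha_s}$, and keeping track that $\mathcal{R}_s(\cG)$ need only be non-negative (so its dimension is bounded by $\mu_++\mu_0$), while $\mathcal{R}_s(\cH)$ is strictly negative (bounded by $\mu_-$).
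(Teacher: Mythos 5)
Your proof is correct and takes essentially the same route as the paper's: the paper likewise treats the bounds $\dim\mathcal{R}_s(\cH)\le\mu_-(Z(X))$ and $\dim\mathcal{R}_s(\cG)\le\mu_+(Z(X))+\mu_0(Z(X))$ as the (self-evident) starting point, converts them to the codimension bounds via $\mu_-+\mu_0+\mu_+=\dim+\textup{codim}$, and proves \eqref{eq:dec28a7} by the same injectivity-from-strict-negativity argument. Your explicit unwinding of \eqref{eq:sep2b7} into the pullback identity $\langle f(X)[H]v,v\rangle=\langle Z(X)\mathcal{R}_s(H),\mathcal{R}_s(H)\rangle$ just makes precise what the paper leaves implicit.
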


\begin{proof}
The lower bounds in (\ref{eq:nov25b7}) and (\ref{eq:nov25c7})
are self-evident. The upper bounds then follow from the identities
\begin{eqnarray*}
\mu_-(Z(X))+\mu_0(Z(X))+\mu_+(Z(X))&=&\textup{dim}\,{\cR}_s(\cH)+
\textup{codim}\,{\cR}_s(\cH)\\
&=&\textup{dim}\,{\cR}_s(\cG)+
\textup{codim}\,{\cR}_s(\cG),
\end{eqnarray*}
upon re-expressing the two
lower bounds in terms of codimensions.

To verify (\ref{eq:dec28a7}), it suffices to note that if
$H_1,\ldots,H_k$ is a basis for $\cH$, then the vectors
$\cR_s(H_1),\ldots,\cR_s(H_k)$ must be linearly independent because of the
presumed strict negativity of $\cH$.
\end{proof}

\begin{lemma}
\label{lem:dec28a7}
If the subspaces $\cG$ and $\cH$ considered in Lemma \ref{lem:nov25a7}
are such that $\cG=\cH^c$ is complementary to $\cH$, then
\begin{equation}
 \label{eq:aug30a7}
   \textup{codim}\,{\mathcal R}_s(\cH^c)\ge
   \mu_-(Z(X)) \ge \textup{dim}\, \mathcal H
\end{equation}
  and
\begin{equation}
 \label{eq:aug30b7}
   \textup{dim}\,{\mathcal R}_s(\cH^c)\le
   \mu_+(Z(X))+\mu_0(Z(X)) \le ng\alpha_s -\textup{dim}\, \mathcal H.
\end{equation}
\end{lemma}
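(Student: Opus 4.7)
The plan is to deduce Lemma \ref{lem:dec28a7} from Lemma \ref{lem:nov25a7} by specializing to the case $\cG=\cH^c$, and then chaining the resulting inequalities with the bound \eqref{eq:dec28a7} and the elementary dimension identity for $Z(X)$. There is no new content to establish — this is a bookkeeping consequence of what has already been proved.

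Concretely, for \eqref{eq:aug30a7}, I would first apply the left half of \eqref{eq:nov25b7} with $\cG$ replaced by $\cH^c$ to obtain
\[
\textup{codim}\,\cR_s(\cH^c)\ge \mu_-(Z(X)).
\]
For the other half, combine the right half of \eqref{eq:nov25b7}, which gives $\mu_-(Z(X))\ge \textup{dim}\,\cR_s(\cH)$, with \eqref{eq:dec28a7}, namely $\textup{dim}\,\cR_s(\cH)\ge \textup{dim}\,\cH$. This produces $\mu_-(Z(X))\ge \textup{dim}\,\cH$ and completes \eqref{eq:aug30a7}.

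For \eqref{eq:aug30b7}, the first inequality is immediate from the right half of \eqref{eq:nov25c7} with $\cG=\cH^c$. For the second inequality I would use that $\cR_s$ takes values in $\RR^{ng\alpha_s}$, so the symmetric matrix $Z(X)$ has size $ng\alpha_s$ and hence
\[
\mu_+(Z(X))+\mu_0(Z(X))+\mu_-(Z(X))=ng\alpha_s.
\]
Subtracting the already-established bound $\mu_-(Z(X))\ge \textup{dim}\,\cH$ from \eqref{eq:aug30a7} yields
\[
\mu_+(Z(X))+\mu_0(Z(X))\le ng\alpha_s-\textup{dim}\,\cH,
\]
finishing \eqref{eq:aug30b7}.

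The only point to be slightly careful about is ensuring that the ambient dimension of the target space of $\cR_s$ is indeed $ng\alpha_s$ (so that $Z(X)$ is $ng\alpha_s\times ng\alpha_s$ and its three signature numbers sum to $ng\alpha_s$); this is immediate from the definition \eqref{eq:sep2a7} of $\cR_s$ together with the fact that each $V_j(X)[H]v\in \RR^{ng^{j+1}}$, summed over $j=0,\dots,s$ giving $ng\alpha_s$. No step poses a real obstacle.
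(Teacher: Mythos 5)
Your proof is correct and is exactly the intended argument: the paper disposes of this lemma with the single line ``This is an immediate consequence of Lemma \ref{lem:nov25a7},'' and your chaining of \eqref{eq:nov25b7}, \eqref{eq:nov25c7}, \eqref{eq:dec28a7} and the identity $\mu_+(Z(X))+\mu_0(Z(X))+\mu_-(Z(X))=ng\alpha_s$ (equivalently, $\textup{dim}\,\cR_s(\cH^c)+\textup{codim}\,\cR_s(\cH^c)=ng\alpha_s$, as in Remark \ref{rem:sep12a7}) is the correct way to make that explicit. Your dimension count for the target space of $\cR_s$ is also right.
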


\begin{proof}
This is an immediate consequence of Lemma \ref{lem:nov25a7}.
\end{proof}

\begin{lemma}
\label{lem:aug26a8}
Let $A\in\RR_{sym}^{n\times n}$ and let $\cU$ be a maximal strictly negative
subspace of $\RR^n$ with respect to the quadratic form
$\langle Au, u\rangle$. Then there exists a complementary subspace $\cV$ of
$\RR^n$ such that $\langle Av, v\rangle\ge 0$ for every $v\in\cV$.
\end{lemma}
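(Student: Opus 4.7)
The plan is to define $\mathcal{V}$ explicitly as the orthogonal complement of $A\mathcal{U}$ in $\mathbb{R}^n$ (with respect to the standard inner product) and then verify the two required properties: complementarity with $\mathcal{U}$, and nonnegativity of the quadratic form on $\mathcal{V}$. The maximality of $\mathcal{U}$ will be used only in the last step, via a contradiction.

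First I would observe that since $\mathcal{U}$ is strictly negative for the form $\langle Au,u\rangle$, the restriction of $A$ to $\mathcal{U}$ is a negative definite (hence injective) operator, so $\dim(A\mathcal{U}) = \dim\mathcal{U}$. Consequently $\dim\mathcal{V} = n - \dim\mathcal{U}$, and any $u \in \mathcal{U}\cap\mathcal{V}$ satisfies $\langle Au,u\rangle = 0$, which forces $u=0$ by strict negativity on $\mathcal{U}$. This gives $\mathbb{R}^n = \mathcal{U}\oplus\mathcal{V}$.

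To prove $\langle Av,v\rangle \geq 0$ on $\mathcal{V}$, I would argue by contradiction: suppose some $v_0 \in \mathcal{V}$ has $\langle Av_0,v_0\rangle < 0$. Then $v_0 \neq 0$, so $v_0 \notin \mathcal{U}$ by complementarity, and $\mathcal{U}' := \mathcal{U} + \mathbb{R} v_0$ has dimension $\dim\mathcal{U} + 1$. For any $u \in \mathcal{U}$ and $\alpha\in\mathbb{R}$ with $u + \alpha v_0 \neq 0$, the crucial simplification is that $\langle Au, v_0\rangle = 0$ because $v_0 \in (A\mathcal{U})^\perp$ and $A$ is symmetric; hence
\[
\langle A(u+\alpha v_0), u+\alpha v_0\rangle = \langle Au,u\rangle + \alpha^2\langle Av_0,v_0\rangle.
\]
Both terms on the right are nonpositive, and at least one is strictly negative since $u+\alpha v_0 \neq 0$ forces either $u\neq 0$ or $\alpha\neq 0$. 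Thus $\mathcal{U}'$ is strictly negative with $\dim\mathcal{U}' > \dim\mathcal{U}$, contradicting maximality of $\mathcal{U}$.

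The main (minor) obstacle is just being careful that the candidate $\mathcal{V} = (A\mathcal{U})^\perp$ really is complementary; once the rank computation $\dim(A\mathcal{U}) = \dim\mathcal{U}$ is in hand, everything else is a straightforward orthogonality argument. The key identity making the contradiction step clean is the vanishing of the cross term $\langle Au,v_0\rangle$, which is exactly what the choice of $\mathcal{V}$ was designed to produce.
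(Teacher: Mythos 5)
Your proof is correct, but it takes a genuinely different route from the paper's. The paper diagonalizes $A$ as $AU=UD$ with the eigenvectors grouped by sign of eigenvalue, writes a basis of $\cU$ as $UM$, shows the block $M_{11}$ (the component along the negative eigenspace) is invertible, and then builds $\cV$ explicitly inside the span of the nonnegative eigenvectors; there nonnegativity of the form is automatic, and maximality of $\cU$ enters only through the dimension count $\dim\cU=\mu_-(A)$ needed to make $\cV$ complementary. You instead take $\cV=(A\cU)^\perp$, get complementarity for free from the rank computation $\dim(A\cU)=\dim\cU$ and the triviality of $\cU\cap\cV$, and then invoke maximality at the opposite end of the argument, via the vanishing cross term $\langle Au,v_0\rangle=0$, to rule out a negative vector in $\cV$. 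Your version is coordinate-free and avoids the block-matrix bookkeeping; the paper's version produces a concrete $\cV$ and makes visible that one may always choose $\cV$ inside the nonnegative eigenspace of $A$. One small wording point: the restriction of $A$ to $\cU$ is a map into $\RR^n$, not an operator on $\cU$, so it is the compression $P_{\cU}A\vert_{\cU}$ that is negative definite; but the injectivity you actually use (namely $Au=0$ and $u\in\cU$ force $\langle Au,u\rangle=0$, hence $u=0$) is argued correctly, so this is cosmetic.
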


\begin{proof}
Let $U\in\RR^{n\times n}$ be an orthogonal matrix and $D\in\RR^{n\times n}$
be a diagonal matrix such that $AU=UD$ and assume that $\mu_-(A)=k_1>0$,
$\mu_+(A)=k_2>0$ and $\mu_0(A)=k_3>0$. Then we may assume that
$$
U=[U_1\ U_2\ U_3]\quad\text{and}\quad D=\textup{diagonal}\{D_1,D_2,D_3\},
$$
where
$$
AU_i=U_iD_i\quad\text{for}\ i=1,2,3, \quad
D_1\prec 0,\quad D_2\succ 0\quad \textrm{and}\quad D_3=0.
$$
Now let $u_1,\ldots, u_{k_1}$ be a basis for $\cU$.
Then, since the columns
of $U$ span $\RR^n$, there exists a matrix $M\in\RR^{n\times k_1}$ with
blocks $M_{i1}\in\RR^{k_i\times k_1}$ for $i=1,2,3$ such that
$$
[u_1\ \cdots \ u_{k_1}]=UM=U_1M_{11}+U_2M_{21}+U_3M_{31}.
$$
The next step is to check that $M_{11}$ is invertible. But, if $M_{11}c=0$
for some vector $c\in\RR^{k_1}$ with components $c_1,\ldots,c_{k_1}$, then
\begin{eqnarray*}
\left\langle A\sum c_iu_i, \sum c_iu_i\right\rangle
&=& \langle AUMc, UMc\rangle\\
&=& \langle U_2D_2M_{21}c+U_3D_3M_{31}c,U_2M_{21}c+U_3M_{31}c \rangle\\
&=& \langle D_2M_{21}c, M_{21}c\rangle\ge 0.
\end{eqnarray*}
Therefore, since $\cU$ is a strictly negative subspace, it follows that
$c=0$. Thus, $M_{11}$ is invertible. Let $M_{ij}\in\RR^{k_i\times k_j}$
for $i=2, 3$ and $j=1,2,3$ with $M_{22}$ and $M_{33}$ invertible, and let
$$
\cV=\textup{span}\left\{U\begin{bmatrix}0&0\\ M_{22}&0\\M_{32}&M_{33}
\end{bmatrix}b:\,b\in\RR^{k_2+k_3}\right\}.
$$
Then it is readily checked that $\cV$ is a complementary subspace to
$\cU$ in $\RR^n$ and that $\langle Av,v\rangle\ge 0$ for every $v\in\cV$.
\end{proof}

 The following proposition ties this section in with Section 
 \ref{sec:middle-border}, by relating the
 number of negative  eigenvalues of
 $Z(X)$ to the dimension of a maximal negative
 subspace of the clamped second fundamental form.

\begin{proposition}
 \label{prop:chsy-applied}
  Let $\mathcal Z$ be the scalar middle matrix of the Hessian
$p^{\prime\prime}$ of
  a symmetric nc polynomial $p$ 
  in symmetric variables and  let
  $(X,v)\in\Smatng \times \mathbb R^n.$
  There is an $\epsilon>0$ such that if
  $0<\delta <\epsilon$,   $\lambda>0$ and $\mathcal H$ is a
  maximal strictly negative subspace
  for the quadratic form
\begin{equation*}
(\RR_{sym}^{n\times n})^g \ni H\mapsto
  \langle p^{\prime\prime}_{\lambda,\delta}(X)[H]v,v\rangle
\end{equation*}
(based on the relaxed Hessian), then
\begin{equation}
\label{eq:jul26a9}
      \textup{dim}\, \mathcal H \le n\mu_-(\mathcal Z)
          \le \textup{dim}\, \mathcal H + \textup{codim}\,
     \mathcal R_{d-1}(\Smatng).
 \end{equation}

\end{proposition}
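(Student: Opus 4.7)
My approach is to translate the signature information about $\cH$ (encoded by the relaxed Hessian) into signature information about the middle matrix $Z_{\lambda,\delta}(X)$, and then convert back to $\cZ$ via Proposition \ref{prop:scalar-middle-relaxed}. The bridge is the factorization
\[
\langle p^{\prime\prime}_{\lambda,\delta}(X)[H]v, v\rangle
  = \langle Z_{\lambda,\delta}(X)\,\cR_{d-1}(H),\cR_{d-1}(H)\rangle,
\]
which holds for every $H\in\Smatng$ because the relaxed Hessian is represented (see Section \ref{sec:middle-border}) with border vector $\widetilde V$ of maximum border word length $d-1$, matching the definition of $\cR_{d-1}$.

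Concretely, I would proceed in four steps. First, choose $\epsilon>0$ so that Proposition \ref{prop:scalar-middle-relaxed} applies, giving $\mu_-(Z_{\lambda,\delta}(X))=n\mu_-(\cZ)$ whenever $0<\delta<\epsilon$ and $\lambda>0$. Second, I would invoke Lemma \ref{lem:aug26a8} for the selfadjoint operator on $\Smatng$ (equipped with the Hilbert--Schmidt inner product) that represents the quadratic form $H\mapsto\langle p^{\prime\prime}_{\lambda,\delta}(X)[H]v, v\rangle$; this produces a subspace $\cH^c$ of $\Smatng$ that is complementary to $\cH$ and on which the form is $\ge 0$. Third, I would apply Lemma \ref{lem:dec28a7} (in the form of \eqref{eq:aug30a7}) with $s=d-1$ to the pair $(\cH,\cH^c)$, obtaining the sandwich
\[
\textup{dim}\,\cH \;\le\; \mu_-(Z_{\lambda,\delta}(X)) \;\le\; \textup{codim}\,\cR_{d-1}(\cH^c).
\]
Fourth, I would apply the second inequality of Lemma \ref{lem:aug30a7} to bound the right-hand side by $\textup{codim}\,\cR_{d-1}(\Smatng)+\textup{dim}\,\cH$. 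Substituting $\mu_-(Z_{\lambda,\delta}(X))=n\mu_-(\cZ)$ into these two estimates delivers both halves of \eqref{eq:jul26a9}.

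The one point that requires care is ensuring $\cH^c$ is simultaneously complementary to $\cH$ \emph{and} nonnegative for the indefinite form, rather than merely orthogonal (which could leak into the negative cone); this is precisely the content of Lemma \ref{lem:aug26a8} and is why that lemma appears in this section. Everything else is bookkeeping: the parameter choice $s=d-1$ is forced by the representation of the relaxed Hessian with $\widetilde V$, and Proposition \ref{prop:scalar-middle-relaxed} handles the reduction from $Z_{\lambda,\delta}(X)$ to $\cZ$.
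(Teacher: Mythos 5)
Your proposal is correct and follows essentially the same route as the paper's proof: choose $\epsilon$ via Proposition \ref{prop:scalar-middle-relaxed} so that $\mu_-(Z_{\lambda,\delta}(X))=n\mu_-(\cZ)$, use Lemma \ref{lem:aug26a8} to produce a nonnegative complement $\cH^c$, and then sandwich $\mu_-(Z_{\lambda,\delta}(X))$ between $\dim\cH$ and $\textup{codim}\,\cR_{d-1}(\cH^c)\le \textup{codim}\,\cR_{d-1}(\Smatng)+\dim\cH$ via Lemmas \ref{lem:dec28a7} and \ref{lem:aug30a7} with $s=d-1$. Your flagged "point that requires care" is exactly the role Lemma \ref{lem:aug26a8} plays in the paper's argument.
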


\begin{proof}
  Let $d$ denote the degree of $p$ and $g$ the number of variables. 
  Choose $\epsilon >0$ as in Proposition \ref{prop:scalar-middle-relaxed}.
  Then for $0<\delta <\epsilon$ and $\lambda >0$
 \begin{equation}
  \label{eq:chsy-1}
     \mu_-(Z_{\lambda,\delta}(X)) = n\mu_-(\mathcal Z).
 \end{equation}

  Since $\cH$ is a maximal strictly negative subspace, the space
  $\cG$ considered
  in Lemma \ref{lem:nov25a7} can be chosen to coincide
  with a complementary subspace $\cH^c$
  to $\cH$ (thanks to Lemma \ref{lem:aug26a8}) and hence, by
Lemmas \ref{lem:dec28a7} and \ref{lem:aug30a7},
\begin{eqnarray*}
  \textup{dim}\, \mathcal H &\le&
  \mu_-(Z_{\lambda,\delta}(X)) \le
\textup{codim}\,{\mathcal R}_{d-1}(\cH^c) \\
&=&\textup{codim}\,\mathcal R_{d-1}(\Smatng)
+\textup{dim}\,{\cR}_{d-1}({\cH})\\
&\le& \textup{dim}\, \mathcal H + \textup{codim}\,
\mathcal R_{d-1}(\Smatng).
\end{eqnarray*}
The rest follows from (\ref{eq:chsy-1}).
\end{proof}

\begin{remark}\rm
 \label{rem:delta}
If the relaxed Hessian is negative definite, then the lower bound in
(\ref{eq:jul26a9}) applied to $\cH=\Smatng$ implies that
\begin{equation*}
\textup{dim}\,\Smatng=\frac{n(n+1)}{2}g
\le n\mu_-(\mathcal Z).
 \end{equation*}
Therefore, since
$$
\mu_-(\cZ)\le g+g^2+\cdots+g^{d-1},
$$
the relaxed Hessian cannot be negative definite
if $n>2(1+g+\cdots+g^{d-2})-1$.
\end{remark}


\section{Proof of Theorem \ref{thm:sigmain} and related results}
 \label{sec:pfofsigmain}
   In this section we prove 
  Theorem \ref{thm:sigmain}, Corollary \ref{cor:sigmain} and 
  some variations thereof.
  The first subsection
  contains a proof of the
  existence of the limit $C_\pm(\mathcal S)$; the
  second verifies the inequality in
  \eqref{eq:dec31a7}; the remaining parts of the theorem
  and Corollary \ref{cor:sigmain}
  are proved in the third subsection. 
  Some supplementary results 
are given in the fourth and final subsection.

\subsection{The existence of $C_\pm$}
 \label{subsec:C-exists}
We shall need the following result:
\begin{lemma}
 \label{lem:Hinequality}
   Suppose $p$ is a symmetric nc polynomial 
   in symmetric  variables. Let $(X,v)\in \Smatng\times \mathbb R^n$ be  given.
If $k$ is a positive integer and
\begin{equation*}
  Y=\textup{diag}\{X,\ldots,X\}\quad\textrm{and}\quad
  w=\textup{col}(v,\ldots,v) \quad k\ \textup{times},
\end{equation*}
then
$$
  c_\pm^{kn}(Y,w;p) \ge k c_\pm^{n}(X,v;p).
$$
\end{lemma}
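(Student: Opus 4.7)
The plan is to lift a maximal strictly negative (resp., positive) subspace of the clamped second fundamental form at $(X,v)$ to one of $k$ times the dimension at $(Y,w)$, using block-diagonal embedding. The key observation is that nc polynomial evaluation respects direct sums: for any block-diagonal $g$-tuple $H=(H_1,\ldots,H_g)\in \SMATNG{kn}$ with $H_\ell=\textup{diag}\{H^{1}_\ell,\ldots,H^{k}_\ell\}$ and each $H^{j}=(H^{j}_1,\ldots,H^{j}_g)\in\Smatng$, one verifies by checking monomials and extending linearly that
\[
p'(Y)[H]=\textup{diag}\{p'(X)[H^{j}]\}_{j=1}^k, \qquad p''(Y)[H]=\textup{diag}\{p''(X)[H^{j}]\}_{j=1}^k.
\]
Since $w=\textup{col}(v,\ldots,v)$, this yields
\[
p'(Y)[H]w=\textup{col}(p'(X)[H^{j}]v)_{j=1}^k, \qquad \langle p''(Y)[H]w,w\rangle=\sum_{j=1}^k \langle p''(X)[H^{j}]v,v\rangle.
\]

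Next, set $m=c_-^n(X,v;p)$ and fix a basis $H^{(1)},\ldots,H^{(m)}$ of a maximal strictly negative subspace $\mathcal H_0\subseteq \cT_p(X,v)$ with respect to the clamped second fundamental form. For each pair $(i,j)$ with $1\le i\le m$ and $1\le j\le k$, let $\widehat H^{(i,j)}\in\SMATNG{kn}$ denote the block-diagonal $g$-tuple whose $j$-th diagonal block equals $H^{(i)}$ and whose remaining blocks are zero. By disjointness of the occupied block positions these $km$ $g$-tuples are linearly independent; let $\widehat{\mathcal H}_0$ denote their span. The first display above gives $p'(Y)[\widehat H^{(i,j)}]w=0$ because $p'(X)[H^{(i)}]v=0$, so by linearity $\widehat{\mathcal H}_0\subseteq \cT_p(Y,w)$. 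For a general nonzero $\widehat H=\sum_{i,j}\alpha_{ij}\widehat H^{(i,j)}\in\widehat{\mathcal H}_0$, the $j$-th block is $K^{j}:=\sum_i\alpha_{ij}H^{(i)}\in\mathcal H_0$, and the second display becomes
\[
\langle p''(Y)[\widehat H]w,w\rangle=\sum_{j=1}^k \langle p''(X)[K^{j}]v,v\rangle.
\]
Every summand with $K^{j}=0$ vanishes, while each summand with $K^{j}\ne 0$ is strictly negative because $\mathcal H_0$ is strictly negative. Since $\widehat H\ne 0$ forces at least one $K^{j}\ne 0$, the total is strictly negative. Thus $\widehat{\mathcal H}_0$ is a $km$-dimensional strictly negative subspace of $\cT_p(Y,w)$, whence $c_-^{kn}(Y,w;p)\ge km=k\,c_-^n(X,v;p)$. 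The argument for $c_+$ is identical with the roles of positive and negative interchanged.

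There is no serious obstacle in this argument; the lemma is a clean bookkeeping consequence of the direct-sum compatibility of nc evaluation. The one point to be mindful of is that strict (not merely semi-) negativity of $\mathcal H_0$ is essential: it is what guarantees that even a single nonzero $K^{j}$ contributes a strictly negative summand while all other summands at worst vanish, so the total cannot accidentally be zero. This is precisely what the definition of $c_\pm$ supplies, and it underlies the concrete computation in Example \ref{ex:less-simple}, of which the present lemma is simply the general form.
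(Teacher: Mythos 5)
Your proof is correct and follows essentially the same route as the paper: the span of your $\widehat H^{(i,j)}$ is exactly the paper's block-diagonal subspace $\mathcal Q_{nk}=\{\textup{diag}\{H^1,\ldots,H^k\}:H^j\in\mathcal H_0\}$, lifted from a maximal definite subspace of the clamped tangent space at $(X,v)$. You merely supply more explicitly the direct-sum compatibility of $p'$ and $p''$ and the strictness check that the paper leaves implicit.
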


\begin{proof}
  Recall that $c_+^{n}(X,v;p)$ is the maximum dimension of a strictly
positive
  subspace of $\mathcal T^n\subset \Smatng$ with respect to the quadratic
 form,
 \begin{equation*}
  \langle H, K \rangle = \langle p^{\prime\prime}(X)[H][K]v,v\rangle,
 \end{equation*}
  where
 \begin{equation*}
   \mathcal T^n=\{H\in\Smatng: p^\prime(X)[H]v=0\}
 \end{equation*}
and the superscript $n$ has been added because the size of the matrices
under consideration is now an issue.
   Similarly, $c_+^{kn}(Y,w;p)$ is the dimension of a maximal positive
   subspace of $\mathcal T^{nk}$ relative to the form
 \begin{equation}
  \label{eq:formnk}
  \langle H, K \rangle = \langle p^{\prime\prime}(Y)[H][K]w,w\rangle,
 \end{equation}
  where
 \begin{equation*}
   \mathcal T^{kn}=\{H\in\mathbb (R^{nk\times nk}_{sym})^g : \
p^\prime(Y)[H]w=0\}.
 \end{equation*}
   Let $\mathcal P_n$ denote a positive subspace of $\Smatng$ with
   $\textup{dim}\,\cP_n=c_+^{n}(X,v;p)$ and let
 \begin{equation*}
   \mathcal Q_{nk} =\{ \textup{diag}\{H^1,\ldots,H^k\} : \  H^j\in
\mathcal P_n\}.
 \end{equation*}
  Then $\mathcal Q_{nk}\subseteq \mathcal T^{nk}$ and $\mathcal Q_{nk}$
  is positive relative to the form in equation \eqref{eq:formnk}.
  Therefore,
$$
c_+^{kn}(Y,w;p)\ge kc_+^n(X,v;p),
$$
since the dimension of $\mathcal Q_{nk}$ is $k$ times the
dimension of $\mathcal P_n$. The verification of the analogous inequality
with $-$ instead of $+$ is similar.
\end{proof}

\begin{proof}[Proof of (i) in Theorem \ref{thm:sigmain}]
The bound
$$
\frac{c_\pm^n(X,v;p)}{n} \le \mu_\pm(\mathcal Z)+\mu_0(\mathcal Z)
$$
guarantees that
 \begin{equation*}
   \Gamma_\pm = \sup_n \left(\sup
\left\{\frac{c_\pm^n(X,v;p)}{n}: \  (X,v)\in\mathcal S_n\right\}\right)
 \end{equation*}
is finite.

Let
\begin{equation}
\label{eq:aug29a8}
 \beta^n_\pm = \sup
\left\{\frac{c_\pm^n(X,v;p)}{n}: \  (X,v)\in\mathcal S_n\right\}.
\end{equation}
We shall prove that
$
\beta_\pm^n
\rightarrow \Gamma_\pm \quad\textrm{as}\quad n\uparrow\infty.
$
by showing that given any $\varepsilon >0$,
there exists an $N>0$ such that
$$
\Gamma_\pm \ge \beta_\pm^n\ge \Gamma_\pm-\varepsilon
$$
for every integer $n\ge N$. Since the upper bound
$\Gamma_\pm \ge \beta_\pm^n$ is clear, it suffices to verify the lower bound
when $\Gamma_\pm-\varepsilon>0$.
Under this assumption, there exists a positive integer $t$ and a pair
$(X,v)\in (\RR^{t\times t}_{sym})^g\times \RR^t$ such that
$$
\frac{c_\pm^t(X,v;p)}{t} \ge \Gamma_\pm-\varepsilon/2.
$$
Let $k_0\ge \frac{2\Gamma_\pm}{\varepsilon}.$
Then for any integer $n> k_0t$, there exists an integer $k\ge k_0$
such that
$$
kt< n\le (k+1)t.
$$
 Here we use the hypothesis that $\cS_1,$ and
 hence $\cS_m$ for every $m$,  is nonempty. 
Since, by hypothesis, $\cS_{n-kt}$ is nonempty, there
is a pair $(Z,u)\in \cS_{n-kt}$.
Let
$(Y,w)=\oplus_1^k (X,v) \oplus (Z,u)$
Then, since $Y\in(\RR^{n\times n}_{sym})^g$ and $w\in\RR^n$,
$$
c_\pm^n(Y,w;p) \ge kc_\pm^t(X,v;p)+ c_\pm^n(Z,u;p)
\ge kc_\pm^t(X,v;p).
$$

Therefore,
\begin{eqnarray*}
\Gamma_\pm&\ge& \beta_\pm^n\ge \frac{c_\pm^n(Y,w;p)}{n}
\ge
\frac{kc_\pm^t(X,v;p)}{n}
\ge \frac{kc_\pm^t(X,v;p)}{(k+1)t}\\
&\ge&\frac{k}{(k+1)}\frac{c_\pm^t(X,v;p)}{t}\\
&\ge&\frac{k_0}{(k_0+1)}(\Gamma_\pm-\varepsilon/2)
\quad(\textrm{since}\ k\ge k_0) \\
&\ge&\frac{2\Gamma_\pm/ \varepsilon}{1+2\Gamma_\pm/ \varepsilon}
(\Gamma_\pm-\varepsilon/2)\quad(\textrm{since}\ k_0\ge 2\Gamma_\pm/
\varepsilon)\\
&\ge&\Gamma_\pm-\varepsilon.
\end{eqnarray*}
\end{proof}

\subsection{Proof of inequality \eqref{eq:dec31a7}}
 \label{subsec:proof-ineq-thm-main}
 The representation formula (\ref{eq:defs-middle}) for the Hessian
 $p^{\prime\prime}(x)[h]$ of $p$ and formulas (\ref{eq:may13a7}) and
 (\ref{eq:apr30a7}) imply that
\begin{equation}
 \label{eq:nmue}
   n\mu_-(\mathcal Z) \ge e_-^n(X,v;p^{\prime\prime},\gtupn)\ge
   e_-^n(X,v;p^{\prime\prime},\cT)
  =c_-^n(X,v;p)
\end{equation} 
and hence, in view of \eqref{eq:aug29a8} that
\begin{equation*}
   \beta_-^n \le \mu_-(\mathcal Z).
\end{equation*}

The next lemma is a step in the proof of (\ref{eq:dec31a7}) that contains 
information of independent interest.

\begin{lemma}
 \label{lem:localq-plus}
   Let $p$ be a symmetric nc polynomial 
   in symmetric variables and let $(X,v)\in \Smatng \times \mathbb R^n$
   be given.
  If
  there is no nonzero nc polynomial $q$ (not necessarily symmetric)
  of degree less than $d$ such that $q(X)v=0$,
  then
 \begin{equation}
 \label{eq:cnmucn}
    \frac{c_\pm^n(X,v;p)}{n}\le \mu_\pm(\mathcal Z) \le
    \frac{g\alpha_{d-1}(\alpha_{d-1}-1)}{2n}+
    \frac{c_\pm^n(X,v;p)}{n}.
 \end{equation}

   In particular, if $c_-^n(X,v;p)=0$ and
  $2n> g\alpha_{d-1}(\alpha_{d-1}-1)$, then
  $\mu_-(\mathcal Z)=0$.
\end{lemma}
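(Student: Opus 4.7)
The plan is to assemble the lemma from three pieces that are already in place: the middle-matrix / border-vector representation and its congruence, the CHSY Lemma in the linearly independent case $s=r=d-1$, and Proposition \ref{prop:chsy-applied} which relates a maximal strictly negative subspace of the relaxed Hessian to $n\mu_-(\mathcal Z)$.

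First, the lower bound $c_\pm^n(X,v;p)/n \le \mu_\pm(\mathcal Z)$ is already recorded in \eqref{eq:nmue} for the minus sign; the analogous chain of inequalities for $\mu_+(\mathcal Z)$ is obtained by replacing $e_-$ by $e_+$ throughout, or equivalently by applying \eqref{eq:nmue} to $-p$ (which flips the signs of $\mathcal Z$, of the Hessian, and of $c_\pm$).

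For the upper bound, I will first translate the hypothesis. Saying that there is no nonzero $q$ of degree strictly less than $d$ with $q(X)v=0$ is exactly saying that the set $\{\,w(X)v : |w|\le d-1\,\}$ is linearly independent in $\mathbb R^n$. With $r = d-1 = s$, the CHSY Lemma (equation \eqref{eq:sequalsr}) therefore applies verbatim and gives
\[
  \textup{codim}\,\mathcal R_{d-1}(\Smatng) \;=\; \frac{g\alpha_{d-1}(\alpha_{d-1}-1)}{2}.
\]
Next, fix $\epsilon>0$ as in Proposition \ref{prop:scalar-middle-relaxed} and pick any $\delta\in(0,\epsilon)$ together with a $\lambda$ large enough that part (1) of Theorem \ref{thm:signature-clamped-relaxed} gives $e_-^n(X,v;p^{\prime\prime}_{\lambda,\delta},\Smatng)=c_-^n(X,v;p)$. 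Letting $\mathcal H$ be a maximal strictly negative subspace for the relaxed Hessian, so that $\dim\mathcal H = c_-^n(X,v;p)$, Proposition \ref{prop:chsy-applied} yields
\[
  c_-^n(X,v;p) \;\le\; n\mu_-(\mathcal Z) \;\le\; c_-^n(X,v;p) + \frac{g\alpha_{d-1}(\alpha_{d-1}-1)}{2}.
\]
Dividing by $n$ gives the $-$ part of \eqref{eq:cnmucn}. The $+$ part follows from the same argument applied to $-p$, using part (2) of Theorem \ref{thm:signature-clamped-relaxed} and the $+$ version of Proposition \ref{prop:chsy-applied} (obtained mutatis mutandis from the proof, since the middle matrix of $-p^{\prime\prime}$ has signature with $\mu_\pm$ swapped).

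The final ``in particular'' assertion is then immediate: if $c_-^n(X,v;p)=0$, the upper bound in \eqref{eq:cnmucn} reads $\mu_-(\mathcal Z)\le g\alpha_{d-1}(\alpha_{d-1}-1)/(2n)$, and under $2n>g\alpha_{d-1}(\alpha_{d-1}-1)$ this strict inequality forces the nonnegative integer $\mu_-(\mathcal Z)$ to be zero. No step is really an obstacle; the only subtlety is bookkeeping the $+$ analogue of Proposition \ref{prop:chsy-applied}, which is an essentially verbatim adaptation using the second clauses of Propositions \ref{prop:scalar-middle-modified}, \ref{prop:scalar-middle-relaxed} and Theorem \ref{thm:signature-clamped-relaxed}.
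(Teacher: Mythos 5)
Your proposal is correct and follows essentially the same route as the paper: the lower bound from \eqref{eq:nmue}, the CHSY Lemma with $r=s=d-1$ (after translating the hypothesis into linear independence of $\{w(X)v:|w|\le d-1\}$), Theorem \ref{thm:signature-clamped-relaxed}(1) together with Proposition \ref{prop:chsy-applied} for the upper bound, and passage to $-p$ for the $+$ case. The paper phrases that last step via the identities $c_\pm^n(X,v;p)=c_\mp^n(X,v;-p)$ and $\mu_\pm(\cZ)=\mu_\mp(-\cZ)$, which is exactly the ``mutatis mutandis'' adaptation you describe.
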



\begin{proof}
  Let $d$ denote the degree of $p$ and $g$ the number of variables.
  By Lemma \ref{lem:CHSY} with $r=s=d-1$,
\begin{equation*}
  \textup{codim}\,\mathcal R_{d-1}(\Smatng)=
  g\alpha_{d-1}\frac{\alpha_{d-1}-1}{2}.
\end{equation*}
  By Theorem \ref{thm:signature-clamped-relaxed},
  there is a $\delta_0>0$ such that for
  each $0<\delta\le \delta_0$ there exists
  a $\lambda>0$ such that
 $$
  e^n_-(X,v;p^{\prime\prime}_{\delta,\lambda},\Smatng)
     =c^n_-(X,v;p).
 $$
  With $\epsilon>0$ as in Proposition \ref{prop:scalar-middle-relaxed}
  and $0<\delta<\epsilon$ (as well as $\delta<\delta_0$),
  the second inequality in Proposition \ref{prop:chsy-applied}
  implies that
 \begin{equation}
  \label{eq:proof-sigmain-2}
    n \mu_-(\mathcal Z) \le c_-^n(X,v;p)
    +  g\alpha_{d-1}(\alpha_{d-1}-1)/2 .
 \end{equation}
Thus, in view of \eqref{eq:nmue}, 
the inequalities in (\ref{eq:cnmucn}) hold for the numbers $c_-^n(X,v;p)$ 
and $\mu_-(\cZ)$. However, since 
\begin{equation}
\label{eq:sep21a10}
c_\pm^n(X,v;p)=c_\mp^n(X,v;-p)\quad\textrm{and} \quad 
\mu_\pm(\cZ)=\mu_\mp(-\cZ),
\end{equation}
these  inequalities also hold for the numbers $c_+^n(X,v;p)$ 
and $\mu_+(\cZ)$.
\end{proof}

   Returning to the proof of inequality \eqref{eq:dec31a7}, 
   since $\mathcal S$ respects direct sums and $p$ is a minimum
   degree defining polynomial
   for $\cS$, the condition Lemma \ref{item:E}(O)
   with $N=d-1$ can not hold. Thus,
   Lemma \ref{lem:directsumsystem} guarantees that
   there is an integer $j$ and a pair
   $(Y,w)\in\mathcal S_j$ such that  $\{m(Y)w: |m|<d\}$ is linearly
   independent.
   For a fixed positive integer $k$, let
   $(X,v)=\oplus_1^k (Y,w)$.
   Then $(X,v)\in\mathcal S_{jk}$
   and $\{m(X)v: \  |m|<d\}$ is linearly independent.
   This linear independence is equivalent to
   the hypothesis of Lemma \ref{lem:localq-plus} and   hence,
   in view of \eqref{eq:nmue},
\beq
\label{eq:cnmucnn}
 \mu_-(\mathcal Z) \le
\frac{g\alpha_{d-1}(\alpha_{d-1}-1)}{2n}+
\frac{c_-^n(X,v;p)}{n}.
\end{equation}
Consequently,
$$
\beta_-^n\le \mu_-(\cZ)\le \frac{g\alpha_{d-1}(\alpha_{d-1}-1)}{2n}+
\beta_-^n
$$
and hence
$$
\limsup _{n\uparrow \infty} \beta_-^n\le \mu_-(\cZ)\le
\liminf_{n\uparrow\infty} \beta_-^n.
$$
Therefore,
 \begin{equation}
 \label{eq:aug29b8}
   C_-(\cS)=\lim_{n\uparrow\infty}\beta_-^n=\mu_-(\cZ).
\end{equation}

By a similar argument, or 
by exploiting (\ref{eq:sep21a10}) and \eqref{eq:aug29b8},
\begin{equation}
\label{eq:nov26a8}
C_+(\cS)=\lim_{n\uparrow\infty}\beta_+^n=\mu_+(\cZ).
\end{equation}
The  bounds (\ref{eq:dec31a7}) follow easily from the identifications
(\ref{eq:aug29b8}) and (\ref{eq:nov26a8})
and the bounds (\ref{eq:nov10a6}).

\begin{remark}
If $p$ is a $k$-minimum degree defining polynomial,
then the argument proving \eqref{eq:cnmucnn} can be
modified as follows:

 $\{w(Y)u: \  |w|<d-k\}$ is linearly
   independent. For a fixed positive integer $\ell$, let
   $(X,v)=\oplus_1^\ell (Y,u)$. Then $(X,v)\in\mathcal S_{j\ell}$
   and $\{w(X)v: \  |w|<d-k\}$ is linearly independent.
   By Lemma \ref{lem:CHSY} with
   $n=j\ell$ and $s=d-1$ and $r=d-1-k$, the codimension
   of $\mathcal R_{d-1}(\Smatng)$ is at most
   $$
ng(\alpha_{d-1}-\alpha_{d-1-k})+g\alpha_{d-1-k}(\alpha_{d-1-k}-1)/2.
$$
Hence, by
   the second inequality in Proposition \ref{prop:chsy-applied},
 \begin{equation}
  \label{eq:proof-sigmain-2a}
    n \mu_-(\mathcal Z) - ng(\alpha_{d-1}-\alpha_{d-1-k}) -
\frac{g\alpha_{d-1-k}(\alpha_{d-1-k}-1)}{2}
\le c_-(X,v;p).
 \end{equation}
\end{remark}

\subsection{Proof of (B)--(D) in Theorem \ref{thm:sigmain}
and Corollary \ref{cor:sigmain}}
 Returning to the assumption that $p$ is a
 minimum degree defining polynomial for
 $\cS$, if $C_-(\cS)=0$, then, by
 equation \eqref{eq:aug29b8}, $\mu_-(\mathcal Z)=0$; i.e.,
 the middle matrix
 for $p^{\prime\prime}(x)[h]$ is
  positive (resp., negative) semi-definite
  and thus also constant (i.e., $Z(x)=Z(0)=\cZ$).
  The factorization of
 $Z(x)=\mathcal Z$ as $W^*W$ in the positive semi-definite case
 shows that
  $p(x)= L(x) + \Lambda(x)^T \Lambda(x)$, where $L(x)$ has degree at most one
 and $\Lambda(x)$ is either equal to zero or to a homogeneous polynomial of
 degree one. In particular, $p$ is convex.

 A similar argument prevails in the case that $C_+(\cS)=0$.

  For the converse, if $p$ is convex, then
 \begin{equation*}
     p^{\prime\prime}(X)[H]
 \end{equation*}
  is positive semi-definite for all $X,H$ and thus
 $c_-(X,v;p)=0$ (for all $X$ and $v$).

 Note that the verification of the  equalities $C_\pm(\cS)=\mu_\pm(\cZ)$
 from equations \eqref{eq:nov26a8} and \eqref{eq:aug29b8} 
  in the proof of Lemma \ref{lem:localq-plus} depends only upon
 $S$ being a nonempty 
  set which
 is  closed with respect to direct sums and for which
 $p$ is a minimum  degree defining polynomial.   Consequently
 Theorem \ref{thm:sigmain} (D) holds.

\begin{proof}[Proof of Corollary \ref{cor:sigmain}]
  The set of full rank points of $p$
  in $\mathcal V(p) \cap \mathcal O$ respects
  direct sums (see Lemma \ref{lem:DirSmooth}).
  If $p$ has positive
  curvature on $\cS$, then for each
$(X,v)\in\cS_n$, we have
  $c_+(X,v;p)=0$ and hence $C_+(\cS)=0$.
  The conclusion
  now  follows from statement (B) in  Theorem \ref{thm:sigmain}.
\end{proof}

\subsection{Determining sets for the signature of $\cV(p)$}

This subsection
  explores and expands upon  the principle
  (mentioned in the introduction, 
  see Proposition \ref{prop:localq-simple})
   that the signature of $\cV(p)$ 
  is determined on any subset
  $\cS$ respecting direct sums, which is large enough so that $p$ is a minimal defining
  polynomial for $\cS$.  

\begin{theorem}
 \label{prop:localq-plus}
  Let $p$ be a symmetric nc polynomial  of degree $d$ in $g$ 
  symmetric variables.
  Suppose
\begin{equation}
\label{eq:may13c7}
  n>\frac{1}{2}g
  \alpha_{d-1}(\alpha_{d-1}-1),
\end{equation}
  $X\in\gtupn,$  $v\in\mathbb R^n,$ and that
\begin{enumerate}
\item[(a)] $(X,v)\in\cV(p)$; and
\item[(b)] there is no nonzero nc polynomial $q$ (not necessarily symmetric)
  of degree less than $d$ such that $q(X)v=0$; and
\item[(c)] $\cS$ is any 
  subset of 
  $\cV(p)$ 
  which is nonempty, closed with respect to direct sums and for which
  $p$ is a minimum degree defining polynomial (so that hypotheses of
  Theorem \ref{thm:sigmain} (A)(i) and (ii) hold), 
\end{enumerate}
  then 
\begin{equation*}
C_\pm(\cS)   = \lceil \frac{c_\pm^n(X,v;p)}{n} \rceil
\end{equation*}
were $\lceil r\rceil$ is the ceiling function; i.e.,
the the smallest integer 
bigger than or equal to $r$.  
\end{theorem}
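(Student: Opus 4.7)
The plan is to combine two ingredients already established in the paper: the identification $C_\pm(\cS)=\mu_\pm(\cZ)$ coming from the proof of Theorem \ref{thm:sigmain}, and the sandwich estimate of Lemma \ref{lem:localq-plus} applied at the single point $(X,v)$. Hypothesis (c) supplies the first ingredient; hypothesis (b) supplies the second; hypothesis \eqref{eq:may13c7} on $n$ is exactly what is needed to squeeze the integer $\mu_\pm(\cZ)$ into a half--open interval of length one.

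First, I would invoke hypothesis (c) and the calculation in Subsection \ref{subsec:proof-ineq-thm-main}. That calculation shows, for any set $\cS$ which respects direct sums, is nonempty, and for which $p$ is a $0$-minimum degree defining polynomial, that
\begin{equation*}
  C_\pm(\cS)=\mu_\pm(\cZ),
\end{equation*}
see equations \eqref{eq:aug29b8} and \eqref{eq:nov26a8}. In particular, $C_\pm(\cS)$ is an \emph{integer}, and it depends on $p$ alone, not on the particular $\cS$ one uses.

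Next, at the specific pair $(X,v)$ supplied by the hypotheses, I would use hypothesis (b), i.e.\ that no nonzero nc polynomial of degree $<d$ annihilates $(X,v)$, to apply Lemma \ref{lem:localq-plus}. This yields
\begin{equation*}
  \frac{c_\pm^n(X,v;p)}{n}\ \le\ \mu_\pm(\cZ)\ \le\ \frac{c_\pm^n(X,v;p)}{n}+\frac{g\,\alpha_{d-1}(\alpha_{d-1}-1)}{2n}.
\end{equation*}
Hypothesis \eqref{eq:may13c7} on $n$ makes the second summand on the right strictly less than $1$. Hence, combining with the previous step,
\begin{equation*}
  \frac{c_\pm^n(X,v;p)}{n}\ \le\ C_\pm(\cS)\ <\ \frac{c_\pm^n(X,v;p)}{n}+1.
\end{equation*}

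Finally, I would observe that a half--open interval $[a,a+1)$ contains exactly one integer, namely $\lceil a\rceil$ (when $a\in\mathbb Z$ this is $a$ itself; otherwise it is $\lfloor a\rfloor+1$, and this value lies strictly between $a$ and $a+1$). Since $C_\pm(\cS)=\mu_\pm(\cZ)$ is an integer lying in $[\,c_\pm^n(X,v;p)/n,\ c_\pm^n(X,v;p)/n+1\,)$, it must equal $\lceil c_\pm^n(X,v;p)/n\rceil$, which is the desired conclusion.

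I do not anticipate a genuine obstacle: everything is a bookkeeping argument built on Lemma \ref{lem:localq-plus} and the identifications \eqref{eq:aug29b8}, \eqref{eq:nov26a8}. The only place one must be a little careful is noting that the upper bound in Lemma \ref{lem:localq-plus} is $<1$ beyond $c_\pm^n(X,v;p)/n$ precisely because of the lower bound on $n$ in \eqref{eq:may13c7}, and that integrality of $\mu_\pm(\cZ)$ then forces equality with the ceiling. One could, as a sanity check, verify that the $+$ case follows from the $-$ case via the substitution $p\mapsto -p$, since both $c_\pm$ and $\mu_\pm$ are interchanged under this symmetry (as already used in \eqref{eq:sep21a10}).
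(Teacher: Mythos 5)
Your proof is correct and follows exactly the same route as the paper's (one-sentence) proof: it combines the identification $C_\pm(\cS)=\mu_\pm(\cZ)$ from \eqref{eq:aug29b8} and \eqref{eq:nov26a8} (which gives integrality), the sandwich inequality \eqref{eq:cnmucn} of Lemma \ref{lem:localq-plus} applied at $(X,v)$ via hypothesis (b), and the bound \eqref{eq:may13c7} to make the gap strictly less than $1$. Your write-up merely makes explicit the ceiling bookkeeping that the paper leaves implicit.
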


\begin{proof}[Proof of Theorem \ref{prop:localq-plus}]

 This follows from the inequalities (\ref{eq:cnmucn}), formulas 
(\ref{eq:aug29b8}) and (\ref{eq:nov26a8}) (which also serve to identify 
$C_\pm(\cS)$ as integers) and the bound (\ref{eq:may13c7}), which insures 
that 
\begin{equation*}
   1> \frac{1}{2n}g\alpha_{d-1}(\alpha_{d-1}-1).
  \end{equation*}
\end{proof}

\begin{proof}[Proof of Proposition \ref{prop:localq-simple}]
  The first part of the proposition is an immediate consequence of
  Theorem \ref{prop:localq-plus}.  The existence of a pair $(X,v)$ 
  with the properties claimed in the second part of the proposition 
  follows from the the second half of the proof of 
  \eqref{eq:dec31a7} in Section  \ref{subsec:proof-ineq-thm-main}.
\end{proof} 


\section{Positive Curvature and Convex Sets}
 \label{sec:convex-sets}
If $p=p(x_1,\ldots,x_g)$ is a concave polynomial
in $g$ commuting variables,
then for each of $\alpha\in\RR$, the superlevel set
$$
L_\alpha=\{x:\,p(x)\ge \alpha\}
$$
is either empty or a convex subset of $\RR^g$. The converse
is false. However, in the classical setting,
a defining polynomial for a convex set $\cC$ with smooth boundary
has second derivative which, when
restricted to the tangent plane $T\cC$ at each point of $\partial{\cC}$
is negative semi-definite, or, 
in the language of
differential geometry, this is the same as to say that
the second fundamental form is positive semi-definite.
This section discusses a non-commutative analog.

  Let $p$ denote a symmetric nc polynomial of degree $d$
  in $g$ symmetric variables. Assume that $p(0)\succ 0$;
  i.e., the constant term of $p$ is strictly positive.
  The {\bf positivity domain} \index{positivity domain}
   of such a  $p$ in dimension
  $n$, denoted $\posdom{p}^n$, is the closure of
  the component of $0$ of the set
 \begin{equation*}
  \cP_p^n = \{ X \in\Smatng: p(X)\succ 0\}.
 \end{equation*}
  As usual, let $\posdom{p} =\cup_n \posdom{p}^n.$
  Let $\boundary{p}^n$ denote the
  boundary of the set $\posdom{p}^n$ and
  $\boundary{p}= \cup_n \boundary{p}^n$.
  If $X\in\boundary{p}$, then $\cK_X$, the
  kernel of $p(X)$, is non-zero.

  Given a smooth curve $X(t)\in\Smatng$,
  the derivative $G^{\prime}(t)$ of the
    function $G(t)=p(X(t))$ is equal to
\begin{equation}
 \label{eq:july7a6}
   G^{\prime}(t)=p^{\prime}(X(t))[X^{\prime}(t)].
\end{equation}
   The second derivative $G^{\prime\prime}(t)$
   is described in terms of the directional
    derivative and Hessian of $p$ by
\begin{equation}
  \label{eq:Gpp}
   G^{\prime\prime}(t)=p^{\prime\prime}
   (X(t))[X^\prime(t)]+p^\prime(X(t))[X^{\prime\prime}(t)],
\end{equation}
  an identity which leads to the following property
  of convex sets.

   \begin{lemma}[Lemma 2.1 of \cite{DHMind}]
    \label{lem:ph}
      Suppose $\posdom{p}$ is convex
     and let $(X,v)\in\Smatng \times \RR^n$ be given. If
     \begin{itemize}
      \item[(i)] $X\in\boundary{p}$;
      \item[(ii)]  $v\ne 0$ and  $p(X)v=0$; and
      \item[(iii)] $(-\delta, \delta)\ni t \mapsto X(t)$
      is a smooth curve in $\boundary{p}$ \\
         for which $X(0)=X$ and $p(X(t))v=0$,
     \end{itemize}
       then
     \begin{equation*}
      \langle p^{\prime\prime}(X(0))[X^\prime(0)]v,v\rangle\le 0\,.
     \end{equation*}
   \end{lemma}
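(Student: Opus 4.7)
The plan is to combine two pieces of information: the constraint that $v$ remains in the kernel along the curve, and the convexity of $\posdom{p}$ applied to a carefully chosen symmetric midpoint comparison.

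First I would set $H = X'(0)$ and $K = X''(0)$ and consider the matrix function $G(t) = p(X(t))$. Hypothesis (iii) gives $G(t)v = 0$ for $t\in(-\delta,\delta)$, so differentiating twice in $t$ yields $G'(0)v = 0$ and $G''(0)v = 0$. Using the chain rule identities \eqref{eq:july7a6} and \eqref{eq:Gpp}, these read
\begin{equation*}
  p'(X)[H]v = 0 \quad\text{and}\quad p''(X)[H]v + p'(X)[K]v = 0,
\end{equation*}
so in particular
\begin{equation*}
  \langle p''(X)[H]v,v\rangle = -\langle p'(X)[K]v,v\rangle.
\end{equation*}
Thus it suffices to prove $\langle p'(X)[K]v,v\rangle \ge 0$.

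Next I would exploit convexity by introducing the symmetric midpoint curve
\begin{equation*}
 Z(t) = \tfrac{1}{2}\bigl(X(t)+X(-t)\bigr).
\end{equation*}
Since $X(\pm t)\in \boundary{p}\subseteq \posdom{p}$ and $\posdom{p}$ is convex, $Z(t)\in\posdom{p}$, and because $\posdom{p}$ is the closure of the component where $p\succ 0$, continuity gives $p(Z(t))\succeq 0$. A Taylor expansion around $t=0$ cancels the odd-order terms and leaves
\begin{equation*}
 Z(t) = X + \tfrac{t^2}{2} K + O(t^3),
\end{equation*}
so
\begin{equation*}
 p(Z(t)) = p(X) + \tfrac{t^2}{2}\, p'(X)[K] + O(t^3).
\end{equation*}
Pairing against $v$ on both sides and using $p(X)v = 0$ gives $\langle p(Z(t))v,v\rangle = \tfrac{t^2}{2}\langle p'(X)[K]v,v\rangle + O(t^3)$, and this is nonnegative for small $t$. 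Dividing by $t^2/2$ and letting $t\to 0$ yields $\langle p'(X)[K]v,v\rangle \ge 0$, which combined with the earlier identity gives the desired inequality $\langle p''(X)[H]v,v\rangle \le 0$.

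The only real subtleties lie in the two convexity-based justifications: verifying that $X(\pm t)$ actually belong to $\posdom{p}$ (rather than just to $\boundary{p}$, which is immediate since $\boundary{p}$ is by definition contained in the closure $\posdom{p}$), and verifying that $p(Y)\succeq 0$ for all $Y\in\posdom{p}$ (which follows because $\posdom{p}$ is defined as the closure of a set on which $p\succ 0$, and positive semidefiniteness is closed under limits). Once these are in hand, the Taylor expansion is routine and the derivation above goes through without further technical obstacles.
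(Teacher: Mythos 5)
Your proof is correct. The paper itself does not reprove this lemma (it is imported as Lemma 2.1 of \cite{DHMind}), but your argument --- differentiate $p(X(t))v=0$ twice to trade $\langle p''(X)[H]v,v\rangle$ for $-\langle p'(X)[K]v,v\rangle$, then get $\langle p'(X)[K]v,v\rangle\ge 0$ from the midpoint curve $\tfrac12(X(t)+X(-t))$, convexity of $\posdom{p}$, and $p\succeq 0$ on $\posdom{p}$ --- is precisely the standard proof given in \cite{DHMind}, and the two subtleties you flag (closedness of $\posdom{p}$ and persistence of positive semidefiniteness under limits) are handled correctly.
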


Next   fix $X$ in $\boundary{p}$ and assume that $\cK_X$ is one-dimensional
   and spanned by $v$ and that $X$ is a smooth point (full rank point)
  of $\boundary{p}$. Under these hypothesis, 
   If  $H$ is in the clamped tangent plane,
 \begin{equation*}
  \mathcal T=\{H\in\Smatng : \ p^\prime(X)[H]v=0\},
 \end{equation*}
   to $\boundary{p}$ at $X$, then (by the implicit function
   theorem) there is a smooth curve $X(t)$
   in $\boundary{p}$ such that $X(0)=X$ and
   $X^\prime(0)=H$.
   This establishes the hypothesis of the lemma.
   Thus the clamped second fundamental form
   is negative semi-definite at $X$, for more details see Section 2 
   of  \cite{DHMind}.

  Next note that $\partial \mathcal P(p)$ is closed with respect
  to direct sums.
  In \cite{DHMind}  smoothness and irreducibility
  type conditions implied that  $\partial \mathcal P(p)$
  generically, but not universally, has positive curvature.
  Indeed, positive curvature could not be guaranteed  at points
  $(X,v)\in\partial \mathcal P(p)$ where the dimension of
  the kernel $p(X)$ exceeds one, which is the case
  for a direct sum of two points $(X,v),(Y,w)\in\partial \mathcal P(p)$.
  The trouble with higher dimensional kernels is that the
  implicit function theorem argument applied near such
  a point produces a curve which lies in $\cV(p)$, but
  perhaps not in the smaller set $\partial \mathcal P(p)$.

  On the other hand, under some additional fairly natural
  assumptions in \cite{DHMind}, convexity of $\mathcal P(p)$ implies
  positive curvature at  many points
  in $\partial \mathcal P(p)$.
This plus Corollary \ref{cor:sigmain} implies that if
$p$ is a
$0$-minimal degree
   defining polynomial for $\boundary{p}$, then $p$ has
   degree at most two.
This conclusion is stronger than the conclusion $p$ has
   degree at most four that was obtained in \cite{DHMind}.
   This sharpening of the bound
depends upon the stronger irreducibility
   hypothesis that is imposed here
and the extra mileage obtained from the introduction
and careful analysis of the relaxed Hessian. More precisely,
in the current terminology (see Subsection \ref{subsec:irreducible}),
the minimum degree hypothesis in \cite{DHMind} is that $p$ is a
1-minimal degree polynomial, whereas
Theorem \ref{thm:sigmain} in this paper assumes  that $p$ is a
0-minimal degree polynomial. Another recent result in this line that
is obtained without assuming  irreducibility states that if
$\posdom{p}$
  is both bounded and convex,
  then there is a positive integer  $\ell$ and a
 set of  real symmetric $\ell\times\ell$ matrices $A_1,\dots,A_\ell$
  such that $X\in\posdom{p}$ if and only if
\[
  I_\ell\otimes I_n -\sum A_j\otimes X_j \succ 0.
\]
 See \cite{HM} for details where it is proved with
 separating hyperplane techniques.

\end{document}